\newcommand{\sett}[1]{\left\{   #1   \right\}}
\DeclareMathAlphabet{\mathpzc}{OT1}{pzc}{m}{it}
\newcommand{\vertiii}[1]{{\left\vert\kern-0.25ex\left\vert\kern-0.25ex\left\vert #1 
		\right\vert\kern-0.25ex\right\vert\kern-0.25ex\right\vert}}
\numberwithin{equation}{section}
\newtheorem{theorem}{\qquad Theorem}[section]
\newtheorem{lemma}[theorem]{\qquad Lemma}
\newtheorem{corollary}[theorem]{\qquad Corollary}
\newtheorem{remark}[theorem]{Remark}
\newtheorem{proposition}{Proposition}[section]
\newtheorem{definition}{Definition}[section]
\newcommand{\dd}{\;{\rm d}}
\newcommand{\ff}{\varphi}
\newcommand{\ii}{{\rm i}}
\newcommand{\nc}{\mathscr{C}}
\newcommand{\ngg}{\mathscr{G}}
\newcommand{\njj}{\mathscr{J}}
\newcommand{\nk}{\mathscr{K}}
\newcommand{\nl}{\mathscr{L}_{\vec c}}
\newcommand{\nlz}{\mathscr{L}_0}
\newcommand{\scal}[1]{\left\langle #1 \right\rangle}
\newcommand{\ns}{\Psi}
\newcommand{\fb}{\mathfrak{B}}
\newcommand{\fd}{\mathfrak{D}}
\newcommand{\fk}{\mathfrak{K}}
\newcommand{\fu}{\mathfrak{U}}
\newcommand{\la}{\langle}
\newcommand{\ra}{\rangle}
\def\d{\mbox{\rm d}}
\def\N{\mathbb{N}}
\def\F{\mathbb{F}}
\def\PP{\mathbb{P}}
\def\E{\mathbb{E}}
\def\K{\mathbb{K}}
\def\L{\mathbb{L}}
\newcommand{\lam}{\lambda}
\newcommand{\x}{\mathscr{X} }
\newcommand{\de}{\Delta}
\newcommand{\al}{\alpha}
\newcommand{\rr}{\mathbb{R}}
\newcommand{\lt}{{L^2({\mathbb{R}^n})}}
\newcommand{\ho}{{H^1(\mathbb{R}^n)}}
\newcommand{\rn}{{\mathbb{R}^n}}
\newcommand{\ltt}{\mathbb{L}_{\vec c}^{\frac12}}
\newcommand{\svc}{S_{\vec c}}
\newcommand{\vc}{{\vec c}} 
\newcommand{\vs}{{\vec\varsigma}}
\newcommand{\norm}[1]{\left\|   #1   \right\|}
\newcommand{\abso}[1]{\left|   #1   \right|}
\newcommand{\paar}[1]{\left(   #1   \right)}
\author{{\bf Amin Esfahani}\footnote{ Department of Mathematics, Nazarbayev University, Astana 010000, Kazakhstan\newline   E-mail: saesfahani@gmail.com, amin.esfahani@nu.edu.kz.}}
\title{ Angular traveling waves of the high-dimensional Boussinesq equation 
	\footnotetext{2020 Mathematical subject classification: 35Q35, 35C08, 35B35, 35B40    }
	\footnotetext{Keywords: High dimensional Boussinesq equation, Traveling wave,  Orbital stability, Strong instability  }}
\date{}
\begin{document}
	\maketitle
	\begin{abstract}
	  This paper studies traveling waves with nonzero wave speed (angular traveling waves) of the high-dimensional Boussinesq equation that have not been studied before. We analyze the properties of these waves and demonstrate that, unlike the unique stationary solution, they lack positivity, radial symmetry, and exponential decay. By employing variational and geometric approaches, along with perturbation theory, we establish the orbital (in)stability and strong instability of these traveling waves. 
 
	\end{abstract}

	
	
	
	
	\section{Introduction}
In this paper, we study traveling wave solutions for the high-dimensional Boussinesq
\begin{equation}\label{hbouss}
	u_{tt}-\Delta u +\de (\de u+|u|^{p-1}u)=0,\qquad(x,t)\in\rn\times\rr,
\end{equation}
where $1<p<2^\ast-1$ and $2^\ast=\frac{2n}{n-2}$ if $n>2$ and $2^\ast=\infty$ if $n=2$. This equation was originally derived by Boussinesq \cite{bouss}  from Euler's equations of motion for two-dimensional potential flow beneath a free surface by introducing suitable  approximations for small amplitude long waves. Equation \eqref{hbouss} was also derived in  the study of dynamics of thin inviscid layers with free surface, and besides, in the investigation of the nonlinear   string, the shape-memory alloys, the propagation of waves in elastic rods and in the continuum limit of lattice dynamics or
coupled electrical circuits (see \cite{esfahani} and the references therein).

In the one-dimensional case $n=1$, several authors have focused on the study of equation \eqref{hbouss}. The pioneering work can be found in the seminal paper by Bona and Sachs \cite{bonasachs}, where they employed Kato's abstract techniques to demonstrate that the Cauchy problem is locally and globally well-posed for small data.  {These} results were further improved in \cite{linares}, where global well-posedness of \eqref{hbouss} in the energy space   with  small data was established.
Recently, the inverse scattering transform and a Riemann-Hilbert approach for \eqref{hbouss} with quadratic nonlinearity were developed in \cite{chalen}. Various studies have  investigated  the stability of solitary waves of the generalized Boussinesq equation when $n=1$. It was proved in \cite{bonasachs}, using the theory developed in \cite{gss}, that for $1 < p < 5$ and $c^2>\frac{p-1}{4}$, the traveling wave is orbitally stable.  It  is known, in the one-dimensional case, \eqref{hbouss} possesses the traveling wave solution $u(x,t)=\ff(x+ct)$, where
\[
\ff(\xi)=\left(\frac{p+1}{2}\left(1-c^2\right)\right)^{\frac{1}{p-1}}{\rm sech}^{\frac{2}{p-1}}\left(\frac{p-1}{2}\sqrt{1-c^2}(\xi)\right)
\]
is the unique positive ground state of the elliptic equation
\[
-\ff''+(1-c^2)\ff= \ff ^{p-1}\ff,\qquad|c|<1.
\]
When $1 < p < 5$ and $c^2<\frac{p-1}{4}$, or $p \geq 5$ and $c^2< 1$, Liu proved the orbital instability of traveling waves in \cite{liu93}. He also demonstrated in \cite{liu95} that the traveling wave is strongly unstable by  the mechanism of blow-up if $c  = 0$. In \cite{lot}, Liu, Ohta, and Todorova showed that when $1 < p < \infty$ and $0 < 2(p+1)c^2 < p-1$, the traveling wave is strongly unstable. Recently,   Li,
Ohta, Wu, and Xue  in  \cite{lowx} proved the orbital instability in the degenerate case  $c^2=(p-1)/4$ with $1<p<5$.

In this paper,
we investigate the stability of the traveling wave solutions with nonzero wave speed of the generalized  Boussinesq equation \eqref{hbouss} in the case $n\geq2$ which has not been studied before. A few    numerical studies concerning the shape of traveling waves of \eqref{hbouss} when $p=n=2$  have been reported  in \cite{ChristouChristov} and \cite{Pel-Step}.

The issue of well-posedness of \eqref{hbouss} in the high-dimensional case was recently investigated in \cite{cgs}. More precisely, Chen,   Guo and  Shao in \cite{cgs} used the transformation $v=u+\ii  \mathfrak{B}^{-1}u_t$ to turn \eqref{hbouss} into
\begin{equation}\label{transformed-bq}
	\ii v_t-\mathfrak{B} v+\mathfrak{M}|\Re(v)|^{p-1}\Re(v)=0,
\end{equation}
where $\mathfrak{B}=\sqrt{-\Delta(I-\Delta)}$ and $\mathfrak{M}=\sqrt{\frac{-\Delta}{I-\Delta}}$. Thanks to the   Strichartz estimates, associated with the Schr\"{o}dinger-type group of \eqref{transformed-bq},  and the analysis of \cite{liu95},  they   obtained the following results \cite{cgs} 
\begin{theorem}  
	\label{local}
	The initial value problem
	\begin{equation}\label{hbouss-ivp}
		\begin{cases}
			u_{tt}-\Delta u +\de (\de u+|u|^{p-1}u)=0,\\
			(u(0),u_t(0))=(u_0,u_1),
		\end{cases}
	\end{equation}
	with $\vec u(t)=(u,(-\de)^{-\frac12}u_t)\in H^s(\rn)\times H^{s-1}(\rn)$  is locally well-posed for
	\[
	s=\begin{cases}
		0,&1<p<1+\frac 4n,\\
		1,&1+\frac 4n\leq p<2^\ast-1,\\
		\max\{0,\frac n2-\frac{2}{p-1}\},&p>1.
	\end{cases}
	\]
	Additionally, the following functionals are invariant under  the flow of \eqref{hbouss-ivp}:
	\[
	\E(u(t))=\frac12\int_\rn\abso{(-\de)^{-\frac12}u_t}^2+|u|^2+|\nabla u|^2\dd x-\frac{1}{p+1}\int_\rn|u|^{p+1}\dd x,
	\]
	\[
	\F(u(t))= \int_\rn((-\de )^{-\frac12}u_t)\nabla((-\de)^{-\frac12}u)\dd x.
	\]
	Moreover, if $0<\E(\vec u(0))<\frac{p-1}{2(p+1)}C_{GN}^{-\frac{2(p+1)}{p-1}}$  with $\vec u(0)=(u_0,u_1)$, and $\|u_0\|_\ho<C_{GN}^{-\frac{p+1}{p-1}}$, then \eqref{hbouss-ivp} is globally well-posed in $H^1(\rn)\times \dot{H}^{-1}(\rn)$, where $C_{GN}$ is the best constant for the Sobolev inequality 
	\[
	\|u\|_{L^{p+1}(\rn)}\leq C_{GN}\|\nabla u\|_\lt^{\frac{n(p-1)}{2(p+1)}}\|u\|_\lt^{1-\frac{n(p-1)}{2(p+1)}}.
	\]
	Furthermore, if $\vec u(0)\in H^1(\rn)\times \dot{H}^{-1}(\rn)$ 
	 such that 
	\[
	\E(\vec u(0))<\frac{p-1}{2(p+1)}C_{GN}^{-\frac{2(p+1)}{p-1}}\quad\text{and}\quad\|u_0\|_\ho>C_{GN}^{-\frac{p+1}{p-1}},
	\]    
	then the local solution $u(t)\in\ho$ blows up in finite time.
\end{theorem}
\begin{remark}\label{gn-remark-0}
	It is known from \cite{liu95} that the constant $C_{GN}$ can be  represented  in terms of the (unique stationary) ground state of \eqref{hbouss}. More precisely,
	\[
	C_{GN}=\|\ff_0\|_{H^1(\rn)}^{-\frac{p-1}{p+1}}=\|\ff_0\|_{L^{p+1}(\rn)}^{-\frac{p-1}
		{2}}
	=\left(\frac{2(p+1)}{n+2-p(n-2)}\right)^{-\frac{p-1}{2(p+1)}}\|\ff_0\|_\lt^{-\frac{p-1}{p+1}},
	\]
	where $\ff_0$ is the unique positive radial solution of
	\begin{equation}\label{c=0}
		\ff-\Delta\ff=\ff^p.
	\end{equation}
\end{remark}


Here, we are interested in studying the dynamics of (angular) traveling waves of \eqref{hbouss} with nonzero wave speed.  Indeed, we are looking for a solution of the form $u(x,t)=\ff(x+\vec ct)$, which is a traveling wave solution of \eqref{hbouss}. So the profile $\ff$ must satisfy 
\begin{equation}\label{local-eq}
	(\vec c\cdot\nabla)^2\ff-\de \ff+\de\paar{\de\ff+\abso{\ff}^{p-1}\ff}=0
\end{equation}
or, equivalently
\begin{equation}\label{traveling}
	(I+\L_\vc)\ff-\de\ff=\ff^p,
\end{equation}
where $$\L_\vc f=\frac{ (\vec c\cdot\nabla)^2}{-\Delta}f=- {\rm p.v.}\;f\ast (\vec c\cdot\nabla)^2N_n,$$ and $N_n(x)$ is the Laplacian fundamental solution.  Since \eqref{hbouss} is invariant under orthogonal transformations, we may choose $\vec c=(c,0,\cdots,0)=c\vec e_1\in\rn$ with $|c|<1$. We first show the existence of solutions of \eqref{traveling}. This is obtained by applying the concentration-compactness principle due to the boundedness of operator $\L_\vc$  in the Lebegsue spaces (see Theorem \ref{exits}).  We refer the reader to \cite{haj-cho-ozawa,haj-2012,haj-2013,haj-st-2004,haj-chang-song,hpl,hS} and the references therein, which discuss the minimization under constraint (and stability) for similar equations. 

In spite of having a bounded operator $\L_\vc$ in \eqref{traveling}, the behavior of solutions of \eqref{second-der} is different from the stationary case $\vc=0$. First of all, it follows, for example from the results of Berestycki and Lions \cite{ber-lions},  that \eqref{traveling}  with $\vc=0$  possesses  a unique positive radially symmetric ground state solution. Moreover, the solution $\ff$ of \eqref{traveling} in the zero-speed $\vc=0$ (see \eqref{c=0}) is smooth and decays exponentially. More precisely, it holds for some $\delta>0$ that
\[
\left|\partial^\al\ff(x)\right|\leq C_\al\exp(-\delta|x|),\qquad\al\in\N^n,\;|\al|\leq2.
\]
In the nonzero speed case, we show in Theorem \ref{decay-regularity} that the traveling waves decay algebraically. To prove this, we  observe that the kernel of \eqref{traveling} (see \eqref{traveling-conv}) behaves like one of  the Gross-Pitaevskii equation.
Secondly, contrary to the zero-speed case, the operator $\L_\vc$ in the case $\vc\neq0$ is not radial, so to show the symmetry of traveling waves, we use an argument developed in \cite{blss} which is based on the Steiner type rearrangements in Fourier space (see Theorem \ref{symmetry}). Thirdly, the traveling waves of \eqref{hbouss}, contrary to the case $\vc=0$, are not positive, and together with lack of information about the uniqueness of \eqref{traveling} make the analysis of \eqref{traveling} difficult.

The next aim is to investigate the orbital stability of traveling waves in \eqref{traveling}, while bearing in mind the local/global well-posedness ensured by Theorem \ref{local} (refer to Definition \ref{dfn-1}). To proceed, we first find some variational characterizations of the traveling waves by transforming \eqref{traveling} into a system of equations possessing the Hamiltonian structure. This helps us to show that \eqref{traveling} has ground states. Next, by using the variational properties of ground states, we establish the stability of the set of all ground states of \eqref{traveling}, provided that the Lyapunov function is convex with respect to $|\vc|$ (see Theorem \ref{stability-t-1}).  The uniqueness of the ground state of \eqref{traveling} is unknown due to the presence of $\L_\vc$, so the aforementioned result is somehow weaker than the usual orbital stability. On the other hand, the classical theory of \cite{gss} seems not to be easy to be applicable due to the lack of spectral information of the linearized operator of \eqref{traveling} in the case $\vc\neq0$. To overcome this difficulty, in Section \ref{section-var-char} we use the perturbation theory and show that the ground states of \eqref{traveling} converge to the unique ground state $\ff_0$ of \eqref{c=0}, and the spectral behavior of $\ff_0$ can be inherited to the ground states $\ff_\vc$ of \eqref{traveling} provided $\vc$ is sufficiently small (Theorem \ref{stab-theo}).

Another difficulty in the study of the orbital stability of traveling waves is the lack of scaling.  As it is seen e.g. in \cite{bss}, an appropriate scaling transfers the role of wave speed from traveling wave solely into the Lyapanuv function and one can analyze the orbital (in)stability of traveling waves by determining the convexity of the Lyapanuv function (see e.g. \cite{gss}). To overcome this difficulty in the case of nonzero wave speed, we apply the geometric and variational approaches  developed by \cite{ribeiro} in Section \ref{section-orb-ins}.

Our other result is related to the strong instability of traveling waves of \eqref{traveling}. The question is to find the conditions under which the traveling waves are unstable due to the mechanism of blow-up. To proceed, in Section \ref{section-str-ins}, we first demonstrate that the ground states of \eqref{traveling} possess the mountain-pass geometry. Then, we follow the ideas presented in \cite{lot} and decompose $H^1$ into two submanifolds that are invariant under the flow of the Cauchy problem associated with \eqref{hbouss}.  This decomposition establishes conditions under which the local solution of \eqref{hbouss} either exhibits global behavior or undergoes a finite-time blow-up, thereby enhancing the results presented in Theorem \ref{local} (see Corollary \ref{blow-up-cor}).   The aforementioned submanifolds depend strongly on the wave speed $\vc$, so our results  improve  upon those obtained in \cite{cgs}.

We finish this section by remarking that the approaches used in this paper can be applied to the study of angular traveling waves of the high dimensional improved Boussinesq equation (see \cite{ChrisC,ChristovTodorovChristou})
\[
(1-\beta_1\Delta )u_{tt}=\Delta   ( u-\al u^2-\beta_2\Delta u),
\] 
where $u$ is the surface elevation, $\beta_1,\beta_2>0$ are two dispersion coefficients, and $\alpha$ is an amplitude parameter.

\vspace{2mm}
Throughout this paper, we assume that $p+1=k/m$, where $k$ is even and $m$ is odd. 

\section{Existence,  variational characterizations, and stability}\label{section-var-char}
In this section, we show the existence, and variational properties, and orbital stability of the traveling waves of \eqref{hbouss}.   To formulate our results, we define the following natural minimization problem
\[
m(\vec c,\lam)=\inf\sett{J(u),\;u\in H^1(\rn),\, K(u)=\lam },
\]
where 
\[
J(u)=\frac12\int_\rn
\left(\abso{\nabla u}^2+\abso{u}^2-\abso{\L_\vc^{\frac12}u}^2\right)\dd x
\]
and $K(u)=\frac{1}{p+1}\|u\|_{L^{p+1}(\rn)}^{p+1}$. By homogeneity, it follows that
\[
m(\vec c)\equiv m(\vec c,1)=\inf\left\{\frac{J(u)}{K^{\frac{2}{p+1}}(u)},\;u\in H^1(\rn)\setminus\{0\}\right\}.
\]
It follows from the concentration-compactness principle \cite{lions} that the above minimization problem has a solution for each $\lambda > 0$, provided $\vc$ is chosen so that the functional $J$ is coercive. This coerciveness condition holds because $|\vec{c}\cdot\nabla| \leq |\vec{c}||\nabla|$  when $|\vec{c}| < 1$. In the context of this problem, a sequence ${\ff_k}\subset H^1(\mathbb{R}^n)$ is considered a minimizing sequence for $J$ if $J(\ff_k)\to m(c,\lambda)$ as $k\to\infty$ and $K(\ff_k)=\lambda$ for all $k$.

It is important to note that $\L_\vc f = (\vc\cdot\vec{R})\ast(\vc\cdot \vec{R}) f$, where $\vec{R}=(R_1,\cdots,R_n)$, and $R_j$, $1\leq j\leq n$, represents the Riesz transform defined by $\hat{R}_j(\xi)=-\ii\frac{\xi_j}{|\xi|}$. For $n>1$, it is well-known that $R_j(x)=\frac{1}{\pi\omega_{n-1}}{\rm p.v.}$$\frac{x_j}{|x|^{n+1}}$. The $L^p$-boundedness of $R_j$ implies that $I+\L_\vc$ is homogeneous of zeroth order and bounded on $L^p(\mathbb{R}^n)$ for any $1<p<\infty$. Moreover, $I+\L_\vc\geq0$ if $|\vc|<1$.
 	Hence, by employing  
	\begin{equation}\label{vector-gn}
		\|u\|_{L^{p+1}(\rn)}\leq C_{\vc}
		\|\nabla u\|_\lt^{\frac{n(p-1)}{2(p+1)}}
		\left(\int_\rn\left(|u|^2-\abso{\L_\vc^{\frac12}u}^2\right)\dd x \right)^{\frac{n+2-p(n-2)}{4(p+1)}},
	\end{equation}
	 and the same lines as in \cite{lions}, the following theorem is established. 

\begin{theorem}\label{exits}
	Assume that  $1<p<\frac{n+2}{n-2}$ if $n>2$ and $p>1$ if $n=2$. Let $|\vec c|<1$, and $\{\ff_k\}$ be a minimizing sequence for $m(\vc,\lam)$. Then there exists a subsequence $\{\ff_{k_j}\}$ of $\{\ff_k\}$, $\{y_j\}\subset\rn$ and $\ff\in H^1(\rn)$ such that $\ff_{k_j}\to\ff$ in $H^1(\rn)$ and  $\ff$ is a minimizer of $J$ subject to the constraint $K(\ff)=\lam$.
\end{theorem}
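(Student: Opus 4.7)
The plan is to follow the standard concentration--compactness machinery of Lions, adapted to the nonlocal term $\nl$. First I would rewrite the quadratic form on the Fourier side as
\[
J(u)=\tfrac12\int_\rn \left(|\xi|^2+1-\tfrac{(\vec c\cdot\xi)^2}{|\xi|^2}\right)|\hat u(\xi)|^2\dd\xi.
\]
Since $(\vec c\cdot\xi)^2/|\xi|^2\leq|\vec c|^2<1$, this yields $J(u)\geq\tfrac12\|\nabla u\|_\lt^2+\tfrac{1-|\vec c|^2}{2}\|u\|_\lt^2$, so $J$ is a quadratic form equivalent to $\|\cdot\|_\ho^2$. In particular every minimizing sequence $\{\ff_k\}$ with $K(\ff_k)\to\lam$ and $J(\ff_k)\to m(\vec c,\lam)$ is bounded in $H^1(\rn)$; moreover \eqref{vector-gn} guarantees $m(\vec c,\lam)>0$.

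I would then apply the first concentration--compactness lemma to the densities $\rho_k=|\ff_k|^{p+1}$. Along a subsequence, one of compactness (up to translations), vanishing, or dichotomy must occur. Vanishing is ruled out by the standard lemma that a bounded sequence in $H^1(\rn)$ satisfying $\sup_{y\in\rn}\int_{B_R(y)}|\ff_k|^{p+1}\dd x\to 0$ for every $R>0$ must converge to zero in $L^q(\rn)$ for each $2<q<2^\ast$, contradicting $K(\ff_k)\to\lam>0$.

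To rule out dichotomy, I would exploit the homogeneity $m(\vec c,\lam)=\lam^{2/(p+1)}m(\vec c)$ obtained from the scaling $u\mapsto\mu u$; since $2/(p+1)<1$ this yields the strict subadditivity $m(\vec c,\lam)<m(\vec c,\al)+m(\vec c,\lam-\al)$ for $0<\al<\lam$. A standard smooth cutoff decomposition $\ff_k=v_k+w_k$ with supports separated by distances tending to infinity would force $K(v_k)\to\al$, $K(w_k)\to\lam-\al$ and $J(v_k)+J(w_k)\leq J(\ff_k)+o(1)$, contradicting the subadditivity. The one genuinely nonlocal ingredient is controlling the cross term $\int(\nl^{1/2}v_k)(\nl^{1/2}w_k)\dd x$; this vanishes because $\nl$ is a translation-invariant Calder\'on--Zygmund-type operator built from the Riesz transforms, and its kernel $(\vec c\cdot\nabla)^2 N_n$ decays like $|x|^{-n}$, so the off-diagonal contribution tends to zero once $\dist(\mathrm{supp}\,v_k,\mathrm{supp}\,w_k)\to\infty$.

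With both alternatives excluded, compactness up to translation produces $\{y_{k_j}\}\subset\rn$ such that $\tilde\ff_{k_j}(\cdot)=\ff_{k_j}(\cdot+y_{k_j})$ converges weakly in $H^1(\rn)$ and strongly in $L^{p+1}(\rn)$ to some $\ff\in H^1(\rn)$ with $K(\ff)=\lam$. Weak lower semicontinuity of $J$ gives $J(\ff)\leq\liminf J(\tilde\ff_{k_j})=m(\vec c,\lam)$, so $\ff$ attains the minimum. Strong convergence in $H^1(\rn)$ then follows from the norm--equivalence of $J$ with $\|\cdot\|_\ho^2$: weak convergence together with convergence of the equivalent Hilbert-space norm $J(\tilde\ff_{k_j})\to J(\ff)$ forces strong convergence. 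The principal obstacle is the treatment of the nonlocal cross term in the dichotomy step; the remainder is a routine adaptation of the Lions framework.
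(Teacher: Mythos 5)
Your overall strategy is the one the paper itself invokes: the paper's ``proof'' of Theorem \ref{exits} consists of observing that $J$ is coercive for $|\vec c|<1$ (because $|\vec c\cdot\nabla|\leq|\vec c||\nabla|$) and then citing Lions' concentration--compactness together with the anisotropic Gagliardo--Nirenberg inequality \eqref{vector-gn}; no details are given. Your Fourier-side coercivity bound, the positivity of $m(\vec c,\lam)$ via \eqref{vector-gn}, the homogeneity $m(\vec c,\lam)=\lam^{2/(p+1)}m(\vec c,1)$ and the resulting strict subadditivity, the exclusion of vanishing, and the final upgrade from weak convergence plus convergence of the equivalent Hilbert norm $\sqrt{2J(\cdot)}$ to strong $H^1$ convergence are all correct, and they are exactly what that citation is hiding.

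The one step you single out as genuinely nonlocal is, however, also the one your justification does not cover as stated. The kernel of $\L_\vc$ away from the origin is homogeneous of degree $-n$ (a Calder\'on--Zygmund kernel built from $(\vec c\cdot\nabla)^2N_n$, with mean zero on spheres), and $|z|^{-n}$ is \emph{not} integrable at infinity. Hence, for two families that are merely bounded in $L^2$ with supports separated by distances $d_k\to\infty$, the off-diagonal pairing $\int_\rn(\L_\vc v_k)\,w_k\dd x$ does not tend to zero just because the supports separate: the Schur/Young bound one would like to invoke requires $\int_{|z|\geq d_k}|z|^{-n}\dd z$, which is infinite. The estimate can be rescued, but only by using more than the decay rate. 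Either (i) use that in the dichotomy construction the concentrated piece $v_k$ is supported in a ball of \emph{fixed} radius $R_0$ about $y_k$, so that $\|v_k\|_{L^1(\rn)}\lesssim R_0^{n/2}\|v_k\|_{\lt}$ and Cauchy--Schwarz against the $L^2$-tail $\left(\int_{|z|\geq d_k}|z|^{-2n}\dd z\right)^{1/2}\sim d_k^{-n/2}$ kills the cross term; or (ii) avoid support separation altogether and split along the weak limit, writing $\ff_k=\ff+(\ff_k-\ff)$ with $\ff_k\rightharpoonup\ff$ in $H^1(\rn)$, in which case the cross term $\langle \L_\vc^{\frac12}(\ff_k-\ff),\L_\vc^{\frac12}\ff\rangle$ vanishes automatically because $\L_\vc^{\frac12}$ is bounded on $\lt$ and $\ff_k-\ff\rightharpoonup0$. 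Either repair is routine, but the inference ``the kernel decays like $|x|^{-n}$, so the off-diagonal contribution tends to zero once the supports separate'' is not valid for this borderline decay and should be replaced by one of these arguments.
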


\begin{remark}\label{gn-remark-1}
	It is worth noting that $C_\vc$ in \eqref{vector-gn} can be estimated with $C_{GN}$ in Remark \ref{gn-remark-0}. More precisely, one can observe from
	\[
	(1-|\vc|^2)\|u\|_\lt^2\leq \int_\rn\left(|u|^2-\abso{\L_\vc^{\frac12}u}^2\right)\dd x
	\leq
	\|u\|_\lt^2
	\]
	and the variational characterization (see \cite{wein-1983})
	\[
	C_\vc^{-(p+1)}=\inf_{u\in \ho\setminus\{0\}}
	\frac{\|\nabla u\|_\lt^{\frac{n(p-1)}{2}}
		\left(\|u\|_\lt^2-\|\L_\vc^{\frac12}u\|_\lt \right)^{\frac{n+2-p(n-2)}{4}}}
	{(p+1)K(u)}
	\]
	that
	\begin{equation}\label{gn-equiv}
		(1-|\vc|^2)^{  \frac{p(n+
				2)-n-2}{4(p+1)}}
		\geq
		\frac{ C_\vc}{C_{GN}} 
		\geq
		1.
	\end{equation}
	By an appropriate scaling, one can show that
	\begin{equation} 
		C_{\vc}	\sim	(1-|\vc|^2)^{  \frac{p(n+
				2)-n-2}{4(p+1)}}	C_{GN}
		.
	\end{equation}
\end{remark}

Suppose that $\ff$ is a constrained minimizer for $m(\vec c,\lam)$ guaranteed by Theorem \ref{exits}. Then, $\ff$ satisfies 
\[
J'(\ff)=\ell K'(\ff)
\]
for some Lagrange multiplier $\ell\in\rr$. It then follows that 
\[
J'(\phi)=  K'(\phi),
\] 
where $\phi=\ell^{\frac{1}{p-1}}\ff$. Thus $\phi$ is a solution of \eqref{traveling} and achieves the minimum $m(\vec c)$.

By multiplying \eqref{traveling} by $\ff$ and integrating over $\rn$, we obtain that
\[
2J(\ff)=(p+1)K(\ff);
\]
so that
\[ 
(p+1)K(\ff)=2J(\ff)=2\left(\frac{2}{p+1}\right)^{\frac{2}{p-1}}(m(\vec c))^{\frac{p+1}{p-1}}.
\]  Solutions of \eqref{traveling} achieving $m(\vec c)$  are  called the ground states. The set of all ground states of \eqref{traveling} is denoted by
\[
G(\vec c)=\left\{u\in H^1(\rn),\;(p+1)K(\ff)=2J(\ff)=2\left(\frac{2}{p+1}\right)^{\frac{2}{p-1}}(m(\vec c))^{\frac{p+1}{p-1}}\right\}.
\]

We also note when $\vec c \neq\vec 0$ that $P(\xi)=-\frac{(\vec c\cdot\xi)^2}{|\xi|^2}$ is the   Fourier multiplier of $\L_{\vec c}$, and it is cylindrically symmetric with respect to  the  direction $e\in \mathbb{S}^{n-1}$, where $\vec c=\ell e$ for some $\ell\in\rr$. Recall that a function $f:\rn\to\rr$
is cylindrically symmetric with respect to a direction $e\in \mathbb{S}^{n-1}$ if we have $f(\mathsf{R}y)=f(y)$ for almost every $y\in\rn$ and all $\mathsf{R}\in O(n)$ with $\mathsf{R}(e)=e$. For such functions, we use the notation $f(y)=f(y_\|,|y_\bot|)$, where   $y = y_\|+ y_\bot $ and $y_\bot$ is perpendicular to $e$.  Let $f^{\ast_e}:\rn\to\rr^+$ denote  the Steiner symmetrization in $(n-1)$-codimensions with
respect to the direction $e$. This symmetrization is obtained by symmetric-decreasing rearrangements in
$(n-1)$-dimensional planes perpendicular to $e$.

\begin{theorem}\label{symmetry}
	Let $\vec{c}=\ell e$ with $e\in \mathbb{S}^{n-1}$ and $\ell\in\mathbb{R}$. Assume that $p$ is an odd number. Then any ground state $\ff$ of \eqref{traveling} has the form $\ff^{\sharp_e}(x+x_0)$ for some $x_0\in\mathbb{R}^n$, where $\ff^{\sharp_e}:=((\hat{\ff})^{\ast_e})^\vee$. Moreover, $\ff(x)=\ff(\mathsf{R}x)$ for all $\mathsf{R}\in O(n)$ such that $\mathsf{R}\vec{c}=\vec{c}$.
\end{theorem}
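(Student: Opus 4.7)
The plan is to adapt the Fourier-space Steiner symmetrization strategy of \cite{blss}. After an orthogonal change of coordinates I may assume $e=e_1$ and $\vc=\ell e_1$, and write $\xi=(\xi_\|,\xi_\bot)\in\rr\times\rr^{n-1}$. The starting point is a sign dichotomy in the two Fourier multipliers that enter $J$: the symbol $|\xi|^2=\xi_\|^2+|\xi_\bot|^2$ of $-\Delta$ is strictly symmetric \emph{increasing} in $\xi_\bot$ at each fixed $\xi_\|$, while the symbol $W(\xi)=\ell^2\xi_\|^2/(\xi_\|^2+|\xi_\bot|^2)$ of the nonnegative operator $-\L_\vc$ is strictly symmetric \emph{decreasing} in $\xi_\bot$ whenever $\xi_\|\neq 0$. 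This opposite monotonicity is what makes the $(n-1)$-codimensional symmetric decreasing rearrangement of $\hat\ff$ in $\xi_\bot$, namely $\ff\mapsto\ff^{\sharp_e}=((\hat\ff)^{\ast_e})^\vee$, the natural candidate symmetrizer.

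First I would show that $J(\ff^{\sharp_e})\leq J(\ff)$ by analyzing each quadratic piece slicewise in $\xi_\|$. Plancherel gives $\|\ff^{\sharp_e}\|_{L^2}=\|\ff\|_{L^2}$. Splitting $|\xi|^2=\xi_\|^2+|\xi_\bot|^2$ and applying the Hardy--Littlewood inequality on $\xi_\bot$-slices with the symmetric increasing weight $|\xi_\bot|^2$ yields $\|\nabla\ff^{\sharp_e}\|_{L^2}\leq\|\nabla\ff\|_{L^2}$, while the same inequality with the symmetric decreasing weight $W$ reverses sign to give $\|\L_\vc^{1/2}\ff^{\sharp_e}\|_{L^2}\geq\|\L_\vc^{1/2}\ff\|_{L^2}$. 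Summing the three pieces produces $J(\ff^{\sharp_e})\leq J(\ff)$, and the equality cases of Hardy--Littlewood with strictly monotone weights force $\hat\ff$ to coincide with its slicewise symmetric decreasing rearrangement up to a modulation $e^{\ii x_0\cdot\xi}$ in Fourier.

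The main technical obstacle is the behavior of the constraint $K(\ff^{\sharp_e})=\frac{1}{p+1}\|\ff^{\sharp_e}\|_{L^{p+1}}^{p+1}$, since Fourier rearrangement does not preserve $L^{p+1}$ in general. This is precisely where the standing assumption $p+1=k/m$ with $k$ even and $m$ odd enters: it reduces the desired inequality $\|\ff^{\sharp_e}\|_{L^{p+1}}\geq\|\ff\|_{L^{p+1}}$ to an even-integer, polynomial-type identity for $\int\ff^{k}\dd x=\|\widehat{\ff^{k/2}}\|_{L^2}^2$, which I would handle by the slicewise Steiner version of the Brascamp--Lieb--Luttinger convolution-rearrangement inequality applied to the $(k/2)$-fold convolution of $\hat\ff$. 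Granting this step, a rescaling $\lambda\ff^{\sharp_e}$ with suitable $\lambda\in(0,1]$ produces an admissible competitor whose $J$-value is no larger than $J(\ff)$. Minimality of $\ff$ then forces every inequality to be an equality, and rigidity of Hardy--Littlewood yields $\ff(x)=\ff^{\sharp_e}(x+x_0)$ for some $x_0\in\rn$, proving the first claim.

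The second assertion is a direct corollary: by construction $\ff^{\sharp_e}$ is cylindrically symmetric about the $e$-axis, since its Fourier transform is slicewise radial in $\xi_\bot$ and Fourier inversion transfers this symmetry to $y_\bot$. Thus $\ff^{\sharp_e}(\mathsf R y)=\ff^{\sharp_e}(y)$ for every $\mathsf R\in O(n)$ with $\mathsf R\vc=\vc$; the translation invariance of the variational problem in directions perpendicular to $e$ then allows me to choose the representative of $\ff$ in its translation orbit so that $x_0$ lies on the $e$-axis, giving $\ff(\mathsf Rx)=\ff(x)$ for all such $\mathsf R$. I expect the most delicate step to be the $L^{p+1}$-monotonicity, since the quadratic Fourier-slice estimates and the rigidity analysis are otherwise standard components of the slicewise rearrangement toolkit.
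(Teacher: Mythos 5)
Your strategy is the same one the paper uses: the paper's proof simply checks that the symbol $|\xi|^2+P(\xi)$, with $P(\xi)=-(\vc\cdot\xi)^2/|\xi|^2$, is cylindrically symmetric about $\vc$ and strictly increasing in $|\xi_\bot|$, and then invokes Theorem 2 of \cite{blss}; what you have written out is essentially the internal proof of that cited theorem. Your splitting of the symbol into the increasing piece $|\xi|^2$ and the decreasing piece $W$ recombines into exactly the monotonicity the paper verifies, and the quadratic slicewise estimates, the rescaling of $\ff^{\sharp_e}$ to restore the constraint, and the cylindrical-symmetry corollary are all sound.

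The genuine gap is in the $L^{p+1}$ step. The reduction you propose --- that $p+1=k/m$ with $k$ even and $m$ odd turns $\|\ff\|_{L^{p+1}(\rn)}^{p+1}$ into $\int_\rn \ff^k\dd x=\|\widehat{\ff^{k/2}}\|_\lt^2$ --- is false unless $m=1$: for $m>1$ one has $|\ff|^{k/m}\neq\ff^k$, and there is no $(k/2)$-fold self-convolution identity for the Fourier transform of $|\ff|^{(p+1)/2}$ because that function is not a polynomial in $\ff$. The Brascamp--Lieb--Luttinger argument, and the rigidity statement that equality forces $\hat\ff=\ee^{\ii(\alpha+x_0\cdot\xi)}|\hat\ff|$ (note that the translation $x_0$ and the phase come from the equality case of this convolution step, not from Hardy--Littlewood applied to the quadratic forms, which only controls $|\hat\ff|$), require $(p+1)/2\in\N$, i.e.\ $p$ an odd integer, which is precisely the hypothesis under which Theorem 2 of \cite{blss} is stated. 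So either you must restrict to $p+1$ an even integer, or you need a genuinely new argument for $K(\ff^{\sharp_e})\geq K(\ff)$; the standing assumption $p+1=k/m$ only guarantees $s^{p+1}=|s|^{p+1}$ for real $s$ and does not rescue the convolution identity. (The paper inherits the same restriction silently by citing \cite{blss}, but your write-up asserts the reduction explicitly, and as stated it fails.)
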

\begin{proof}
	The proof is followed  by noting that $\L_{\vec c}-\Delta$  satisfies the assumptions of Theorem 2 in \cite{blss}. Indeed,  one observes that  $|\xi|^2-|\vec c|^2\leq|\xi|^2+P(\xi)\leq |\xi|^2$, and $|\xi|^2+P(\xi)$ is cylindrically
	symmetric with respect to $\vec c$  and $|\xi_\bot|\mapsto |\xi|^2+P(\xi)$ is strictly increasing. This can be easily checked by using a suitable transformation to get $\vec c=(c,0,\cdots,0)$.
\end{proof}
\begin{remark}\label{cpt-embed-rem}
 	It is worth noting that if   $G$ is   a subgroup  of $O(n)$ with $n\geq2$, we can define the space
	\[
	H^1_G=\sett{u\in H^1(\rn),\; u\;\text{is invariant with respect to}\; G}.
	\]
	 In particular, $H_G^1$ is the subspace of all radial  functions of $H^1(\rn)$ if $G=SO(n)$. Corollary 2 in \cite{Skrzypczak} shows that if the orbit $H\cdot x$ is infinite for any $x\in\mathbb{S}^{n-1}$, then $H^1_G$ is compactly embedded into $L^p(\mathbb{R}^n)$ for any $p\in(2,2^\ast)$. Hence, if $J$ is well-defined on $H_G^1$ for some $\vc$, then all minimizing sequences of $m(\vec{c})$  associated with $H_G^1$, are relatively compact in $\lt$. Consequently, \eqref{traveling} has a ground state in $H_G^1$. When $\vc=0$, it is known that that \eqref{traveling} has a radial ground state.
 
\end{remark}

\begin{remark}
	The numerical computations validate the result of Theorem \ref{symmetry}. As depicted in Figure \ref{fig-2}, the contour sets of solutions of \eqref{traveling} exhibit elliptic shapes. Furthermore, the ellipses become narrower (elongated along the major axis) as $\vc$ approaches one, whereas they shrink into circles as $\vc$ approaches zero.
\end{remark}

Our next aim is to investigate the decay behavior of traveling waves.  Notice that   \eqref{traveling} can be re-written in the convolution form
\begin{equation}\label{traveling-conv}
	\ff=\K_{\vec{c}}\ast\ff^p,
\end{equation}
where
\[
\hat{\K}_{\vec{c}}(\xi)=\frac{1}{1+|\xi|^2-\frac{1}{|\xi|^2}(\vec c\cdot\xi)^2}.
\]
The rational function $\hat{\K}_{\vec{c}}$ is only singular at the origin, where the singularity is of the form $O(|\xi|^{-a})$ for some $a>0$. The following result regarding the decay behavior of 
$\hat{\K}_{\vec{c}}$
follows from \cite{gravej2004}, where integrability estimates of the derivatives of 
$\hat{\K}_{\vec{c}}$
are demonstrated. These estimates provide the algebraic decay of 
$\hat{\K}_{\vec{c}}$
using the inverse Fourier transform formula. 
Indeed, one can observe that the asymptotic properties of $\K_{\vec{c}}$ are
mainly determined  by the behavior of  $\hat{\K}_{\vec{c}}$  near to the origin.     More precisely, 
\[
\hat{\K}_{\vec{c}} (\xi)\sim\frac{|\xi|^2}{|\xi|^2 -(\vc\cdot\xi)^2}.
\] 
Moreover, by using some stationary phase estimates, we observe for $\vc=(c,0,\cdots,0)$ that
\[
\hat{\K}_{\vec{c}} (\xi)=\hat{\K}_{\vec{c}} (\xi_1,\xi_\bot)
\sim\frac{1}{1-c^2}\widehat{R_{1,1}}(\sqrt{1-c^2}\xi_1,\xi_\bot)+\frac{1}{2}\sum_{j=1}^{n}
\widehat{R_{j,j}}(\sqrt{1-c^2}\xi_1,\xi_\bot),
\]
where
\[
R_{j,j}(x)=\frac{\Gamma\left(\frac n2\right)}{2\pi^{\frac n2}}
{\rm p.v.}\frac{|x|^2-nx_j^2}{|x|^{n+2}}.
\]
Actually, it is clear that $R_{j,j}=-R_j\ast R_j$.
\begin{lemma}\label{decay-kern}
	Let $\kappa\in(n-2,n]$. Then there exists $C_\kappa>0$ such that\[
	\left|\K_{\vec{c}}(x)\right|\leq C_\kappa|x|^{-\kappa}
	\]
	for all $x\in\rn$. In particular, $\K_{\vec{c}}\in L^r(\rn)$ for any $1<r<\frac{n}{n-2}$, and $\nabla \K_{\vec{c}}\in L^r(\rn)$ for any $1\leq r<\frac{n}{n-1}$. Moreover, for any $n\in\N$,
	\[
	\sup_{x\in\rn}|x|^{\kappa+n}\left|\frac{\d^n}{\d x^n}\K_{\vec{c}}\right| <+\infty.
	\]
\end{lemma}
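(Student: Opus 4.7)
The plan is to adapt the Fourier-analytic strategy of Gravejat \cite{gravej2004} for the Gross--Pitaevskii kernel, exploiting that $\hat{\K}_{\vec c}$ is smooth on $\rn\setminus\{0\}$, decays like $|\xi|^{-2}$ at infinity, and has its only irregularity at the origin, where it is bounded but only $0$-homogeneous modulo higher-order terms. First I would split $\K_{\vec c}=\K_{\vec c}^{\rm lo}+\K_{\vec c}^{\rm hi}$ via a cutoff $\chi\in C_c^\infty(\rn)$ equal to $1$ near $0$, so that the high-frequency part has Fourier transform $(1-\chi)\hat{\K}_{\vec c}\in C^\infty(\rn)$ supported away from the origin, with $\partial^\alpha[(1-\chi)\hat{\K}_{\vec c}]\in L^1(\rn)$ to every order. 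The identity $(ix)^\alpha\K_{\vec c}^{\rm hi}(x)=\mathcal F^{-1}[\partial^\alpha((1-\chi)\hat{\K}_{\vec c})](x)$ together with the Riemann--Lebesgue lemma then forces $\K_{\vec c}^{\rm hi}$ to be smooth and rapidly decreasing, so all the asymptotic information of $\K_{\vec c}$ is carried by the low-frequency part.

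For $\widehat{\K_{\vec c}^{\rm lo}}=\chi\hat{\K}_{\vec c}$ I would use the asymptotic expansion displayed just above the lemma: near the origin
\[
\hat{\K}_{\vec c}(\xi)=\tfrac{1}{1-c^2}\widehat{R_{1,1}}\bigl(\sqrt{1-c^2}\xi_1,\xi_\bot\bigr)+\tfrac12\sum_{j=1}^{n}\widehat{R_{j,j}}\bigl(\sqrt{1-c^2}\xi_1,\xi_\bot\bigr)+E(\xi),
\]
where $E$ vanishes to higher order at the origin and is smooth elsewhere. The principal-value symbols $\widehat{R_{j,j}}$ are $0$-homogeneous and smooth on $\S$, so their inverse Fourier transforms are exactly the anisotropic dilates of the principal-value kernels $\frac{\Gamma(n/2)}{2\pi^{n/2}}(|x|^2-nx_j^2)|x|^{-n-2}$, giving the sharp $|x|^{-n}$ decay. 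The error $E$ contributes a term decaying strictly faster than $|x|^{-n}$ by the standard Fourier estimate for more regular symbols. Combined with the Schwartz-type bound on $\K_{\vec c}^{\rm hi}$, this yields the pointwise estimate $|\K_{\vec c}(x)|\leq C_\kappa|x|^{-\kappa}$ for every $\kappa\in(n-2,n]$.

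The $L^r$-integrability claims then follow routinely. Splitting $\int_\rn|\K_{\vec c}|^r\dd x$ across $\{|x|\leq 1\}$ and $\{|x|>1\}$, the outer integral converges by the pointwise bound with $\kappa$ chosen slightly larger than $n/r$, which is admissible exactly when $r<n/(n-2)$, while the inner integral is controlled by the local $L^\infty$-behavior (or, for $\nabla\K_{\vec c}$, by the local integrability of $|x|^{-(n-1)-\epsilon}$). The bound on $\nabla\K_{\vec c}$ in $L^r$ for $r<n/(n-1)$ is obtained by running the same plan on the symbol $(i\xi)\hat{\K}_{\vec c}(\xi)$, which gains one power of $|\xi|$ at the origin and thereby one extra degree of decay in $x$. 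Analogously, for $D^k\K_{\vec c}$ one applies the scheme to the symbol $(i\xi)^\beta\hat{\K}_{\vec c}(\xi)$ to arrive at $|x|^{\kappa+k}|D^k\K_{\vec c}(x)|\leq C$.

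The main obstacle will be the bookkeeping of successive derivatives of the rational symbol $\hat{\K}_{\vec c}(\xi)=|\xi|^2/\bigl(|\xi|^2(1+|\xi|^2)-(\vec c\cdot\xi)^2\bigr)$ near the origin: each differentiation worsens the homogeneity by one power of $|\xi|^{-1}$, and reaching the sharp endpoint $\kappa=n$ requires careful inductive control on the error term $E$, obtained through repeated integration by parts and the stationary-phase-type estimates developed in \cite{gravej2004}.
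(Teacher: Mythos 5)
Your proposal follows essentially the same route the paper indicates (a low/high frequency splitting, the $\widehat{R_{j,j}}$ expansion of the symbol at the origin, and Gravejat's integrability estimates for $\partial^\alpha\hat{\K}_{\vec c}$ fed into the inverse Fourier transform), and the outline is correct. One small correction: for $n\geq3$ the high-frequency piece is not Schwartz, since $(1-\chi)\hat{\K}_{\vec c}\sim|\xi|^{-2}$ fails to be integrable and $\partial^\alpha\bigl[(1-\chi)\hat{\K}_{\vec c}\bigr]\in L^1(\rn)$ only for $|\alpha|\geq n-1$, so $\K_{\vec c}^{\rm hi}$ retains a $|x|^{2-n}$ singularity at the origin; it is this local singularity (not the far-field decay, which you invoke for it) that produces the restriction $r<\frac{n}{n-2}$, and it is harmless for the pointwise bound because $|x|^{-(n-2)}\leq|x|^{-\kappa}$ for $|x|\leq1$ and $\kappa>n-2$.
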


\begin{remark}
	We should remark that if one uses a suitable transformation to assume that $\vc=(c,0,\cdots,0)$, then by following the same ideas of \cite{gravej2004}, one can show (see \cite{gravej2005}) that
	\[
	R^n\K_\vc\ast f(Rx)\to
	\frac{\Gamma\left(\frac n2\right)c^2(1-c^2)^\frac{n-3}{2}(1-c^2+(c^2-n)x_1^2)}{2\pi^{\frac n2}(1-c^2+c^2x_1^2)^\frac{n+2}{2}}
	\int_\rn f(x)\dd x
	\]
	as $R\to\infty$.
\end{remark}
\begin{theorem}\label{decay-regularity}
	Let $\ff\in\ho$ be a traveling wave of \eqref{traveling} with $|\vc|<1$. Then $\ff\in W^{2,q}(\rn)$ for any $1<q<\infty$. Moreover,    $ |x|^{\kappa}\ff\in L^\infty(\rn)$ for any $\kappa\in(n-2,n]$. If $s\mapsto s^p$ is smooth for any $s>0$, then $\ff\in H^\infty(\rn)$. 
\end{theorem}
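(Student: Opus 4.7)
The argument naturally splits into three steps: Sobolev regularity by elliptic bootstrap, algebraic pointwise decay obtained from the kernel estimate of Lemma~\ref{decay-kern}, and $H^\infty$-regularity by iteration once the nonlinearity is smooth. For the first step, I would recast \eqref{traveling} as $(I-\Delta)\ff=\ff^p-\L_\vc\ff$ and exploit that $\L_\vc$ is bounded on $L^q(\rn)$ for every $q\in(1,\infty)$, a fact noted just after Theorem~\ref{exits} because $\L_\vc$ is built from Riesz transforms. Starting from $\ff\in\ho\hookrightarrow L^{p+1}(\rn)$, the right-hand side lies in $L^{(p+1)/p}(\rn)$; inverting $(I-\Delta)$ via the Bessel potential places $\ff\in W^{2,(p+1)/p}(\rn)$. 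Sobolev embedding then lifts the integrability exponent, and feeding the new regularity back into the equation closes a standard iteration yielding $\ff\in W^{2,q}(\rn)$ for every $q\in(1,\infty)$; in particular $\ff\in L^\infty(\rn)$ and $\ff(x)\to 0$ as $|x|\to\infty$.

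For the algebraic decay, I would work directly with the convolution representation~\eqref{traveling-conv}. Fix $\kappa\in(n-2,n]$ and, for $|x|$ large, split
\[
\ff(x)=\int_{|y|\leq|x|/2}\K_{\vec c}(x-y)\ff^p(y)\dd y+\int_{|y|>|x|/2}\K_{\vec c}(x-y)\ff^p(y)\dd y.
\]
In the first integral $|x-y|\geq |x|/2$, so Lemma~\ref{decay-kern} gives $|\K_{\vec c}(x-y)|\leq C|x|^{-\kappa}$, and the contribution is bounded by $C|x|^{-\kappa}\|\ff\|_{L^p}^p$ (note $\ff\in L^p\cap L^\infty$ by step one). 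For the second integral, H\"older's inequality with $\K_{\vec c}\in L^r(\rn)$ for some $r\in(1,n/(n-2))$ and the vanishing of $\ff$ at infinity show that this piece tends to zero as $|x|\to\infty$. Once $|\ff(x)|\leq C(1+|x|)^{-\sigma}$ is known for some $\sigma>0$, the tail integral can be re-estimated quantitatively by a negative power of $|x|$ that depends on $p\sigma$, and iterating this refinement bootstraps $\sigma$ up to the saturation rate~$\kappa$.

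The principal obstacle is this second step: converting the qualitative vanishing of $\ff$ at infinity into the sharp algebraic rate $|x|^{-\kappa}$, and ensuring the tail estimate never dominates the contribution $|x|^{-\kappa}$ supplied by the first integral. The crucial point is that the nonlinearity amplifies decay since $p>1$, so the iteration $\sigma\mapsto\min(p\sigma-\eta,\kappa)$ (with $\eta$ arising from the local integrability of $\K_{\vec c}$ near the origin) strictly improves $\sigma$ at each round and saturates in finitely many steps; the condition $\kappa\leq n$ in Lemma~\ref{decay-kern} is exactly what prevents overshooting the kernel decay. Finally, when $s\mapsto s^p$ is smooth on $(0,\infty)$ — the convention $p+1=k/m$ with $k$ even and $m$ odd makes $\ff\mapsto\ff^p$ a well-defined smooth Nemytskii operator — one differentiates~\eqref{traveling} and reapplies the bootstrap of step one to $\partial^\alpha\ff$, obtaining $\ff\in W^{k,q}(\rn)$ for all $k\in\N$ and $q\in(1,\infty)$, hence $\ff\in H^\infty(\rn)$.
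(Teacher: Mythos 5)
Your three-step skeleton (elliptic bootstrap for $W^{2,q}$, splitting of the convolution for the decay, differentiation of the equation for $H^\infty$) coincides with the paper's, which runs the regularity bootstrap directly on $\ff=\K_{\vec c}\ast\ff^p$ using that $\K_{\vec c}$, $\partial_{x_j}\K_{\vec c}$ and $\partial_{x_i}\partial_{x_j}\K_{\vec c}$ are $L^r$-multipliers (Lizorkin), and defers the decay to Lemma~\ref{decay-kern} together with the argument of Bona--Li \cite{bonali}. Two remarks. First, a small bookkeeping slip in your step one: from $\ff\in\ho$ you only know $\ff\in L^q(\rn)$ for $2\le q\le 2^\ast$, so $\L_\vc\ff$ need not lie in $L^{(p+1)/p}(\rn)$, since $(p+1)/p<2$. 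Either handle the two terms of $(I-\Delta)\ff=\ff^p-\L_\vc\ff$ with their separate integrabilities, or bootstrap on the single-term identity $\ff=\K_{\vec c}\ast\ff^p$ as the paper does, where the multiplier property of $\partial^\alpha\K_{\vec c}$, $|\alpha|\le2$, gives $\ff^p\in L^r\Rightarrow\ff\in W^{2,r}$ in one stroke. This is cosmetic.

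The genuine gap is in the decay step. Your iteration $\sigma\mapsto\min(p\sigma-\eta,\kappa)$ does not ``strictly improve $\sigma$ at each round'' from an arbitrary starting value: $p\sigma-\eta>\sigma$ holds only when $\sigma>\eta/(p-1)$, so the map is contractive below its fixed point; and, more seriously, your splitting never produces an initial algebraic rate at all --- the near piece is $O(|x|^{-\kappa})$ but the far piece is only shown to be $o(1)$ by H\"older, so the conclusion of the first pass is merely $\ff(x)\to0$, which is where you started. The missing device, which is the actual content of the Bona--Li argument the paper cites, is an absorption estimate that converts qualitative vanishing into the full rate in a single step: given $\epsilon>0$ choose $R$ with $|\ff(y)|<\epsilon$ for $|y|>R$, set $M=\sup_x(1+|x|)^{\kappa}|\ff(x)|$ (made rigorous by first taking the supremum over a bounded region and passing to the limit), and estimate
\[
(1+|x|)^{\kappa}|\ff(x)|\le C_R\,\|\ff\|_{L^\infty(\rn)}^{p}+\epsilon^{p-1}\,C\,M,
\]
where the first term comes from $|y|\le R$ (there $|\K_{\vec c}(x-y)|\lesssim(1+|x|)^{-\kappa}$ by Lemma~\ref{decay-kern}) and the second from $|y|>R$ using $|\ff(y)|^{p}\le\epsilon^{p-1}|\ff(y)|$ together with a uniform bound on $(1+|x|)^{\kappa}\int_\rn|\K_{\vec c}(x-y)|(1+|y|)^{-\kappa}\dd y$; the latter is exactly where the admissible range of $\kappa$ and the local integrability of $\K_{\vec c}$ from Lemma~\ref{decay-kern} enter. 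Choosing $\epsilon$ so small that $\epsilon^{p-1}C<\tfrac12$ absorbs the second term and yields $M<\infty$ with no iteration on $\sigma$. Without this (or an equivalent mechanism exploiting the power $p-1>0$ of the small supremum, rather than a power of the decay exponent), your scheme stalls at $\sigma=0$.
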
 
\begin{proof}
	The proof of regularity  relies on the Lizorkin theorem \cite{Lizorkin} and standard arguments involving the Riesz operators. This is achieved by considering the fact that the kernels $\K_{\vec{c}}$, $\partial_{x_j}\K_{\vec{c}}$, and $\partial_{x_i}\partial_{x_j}\K_{\vec{c}}$ (with $1\leq i,j\leq n$) act as $L^r(\mathbb{R}^n)$-multipliers for $1<r<\infty$.

	The algebraic decay of the traveling wave is deduced from Lemma \ref{decay-kern} and argument of \cite{bonali}, so we omit the details.
\end{proof}

\begin{remark}
	In spite of being unable to prove it, numerical computations shown in Figures \ref{fig-1} and \ref{fig-3} indicate that traveling waves of \eqref{traveling} are not positive when $\vc\neq0$. This situation is different from the case where $\vc=0.$
\end{remark}

\begin{figure}[ht]
	\begin{center}
		\scalebox{0.28}{\includegraphics{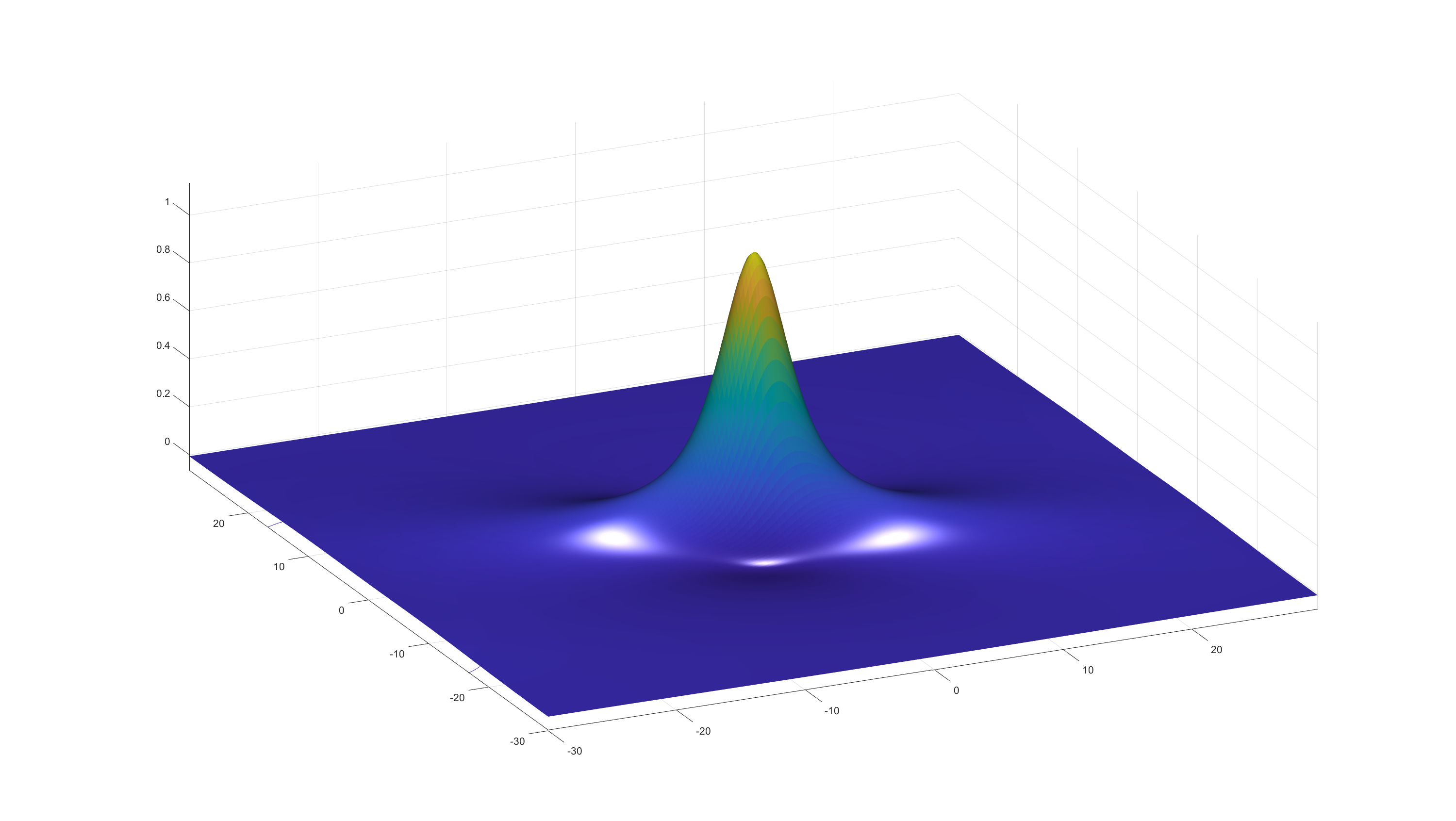}   	}
		\caption{The traveling wave of \eqref{traveling} with $c=0.8$.  } \label{fig-1} 
\end{center}\end{figure}

\begin{figure}[ht]
	\begin{center}
		\scalebox{0.28}{\includegraphics{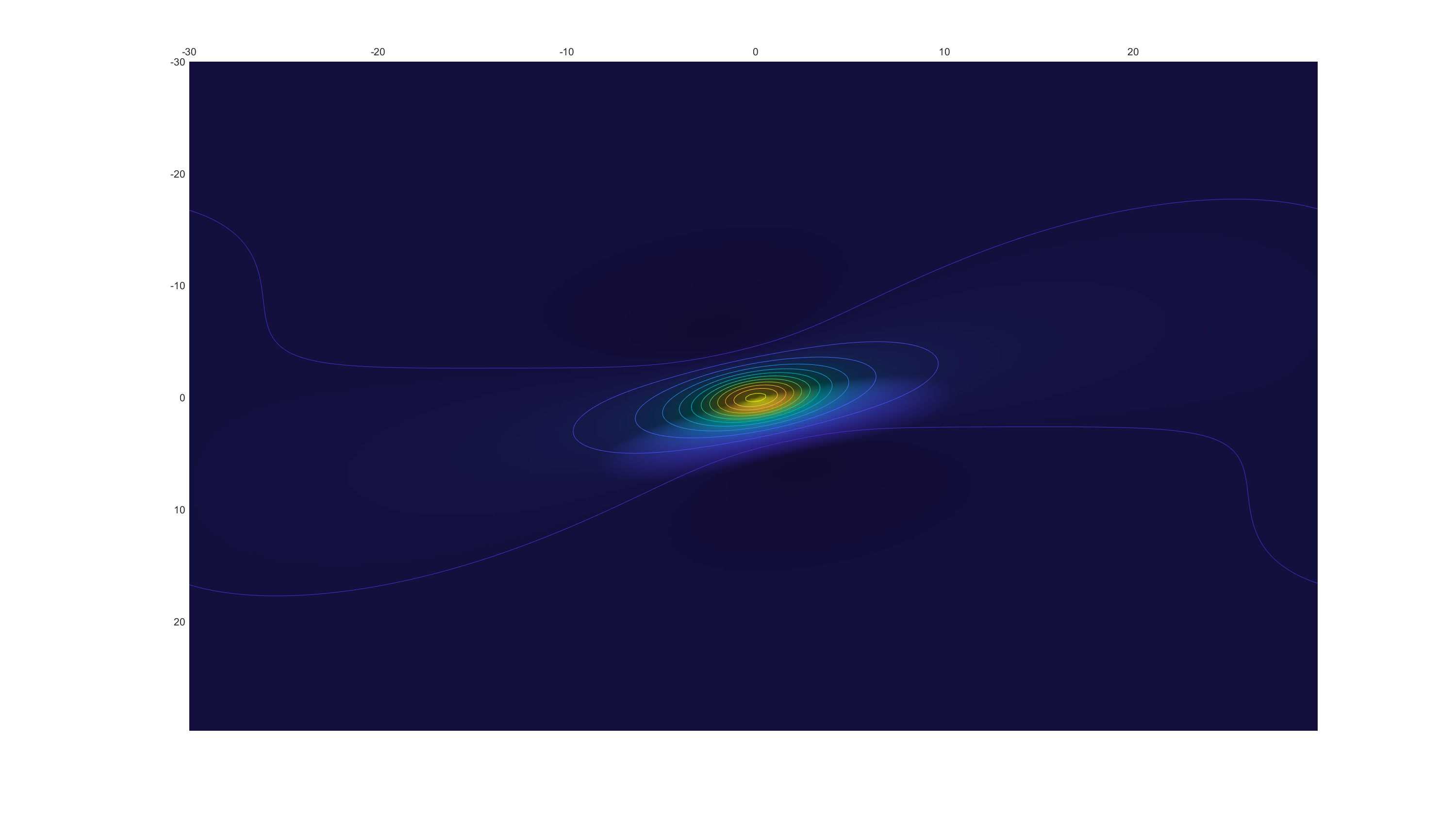}   	}
		\caption{The contours of solutions of \eqref{traveling} with $c=0.8$.  }\label{fig-2} 
\end{center}\end{figure}

\begin{figure}[ht]
	\begin{center}
		\scalebox{0.18}{\includegraphics{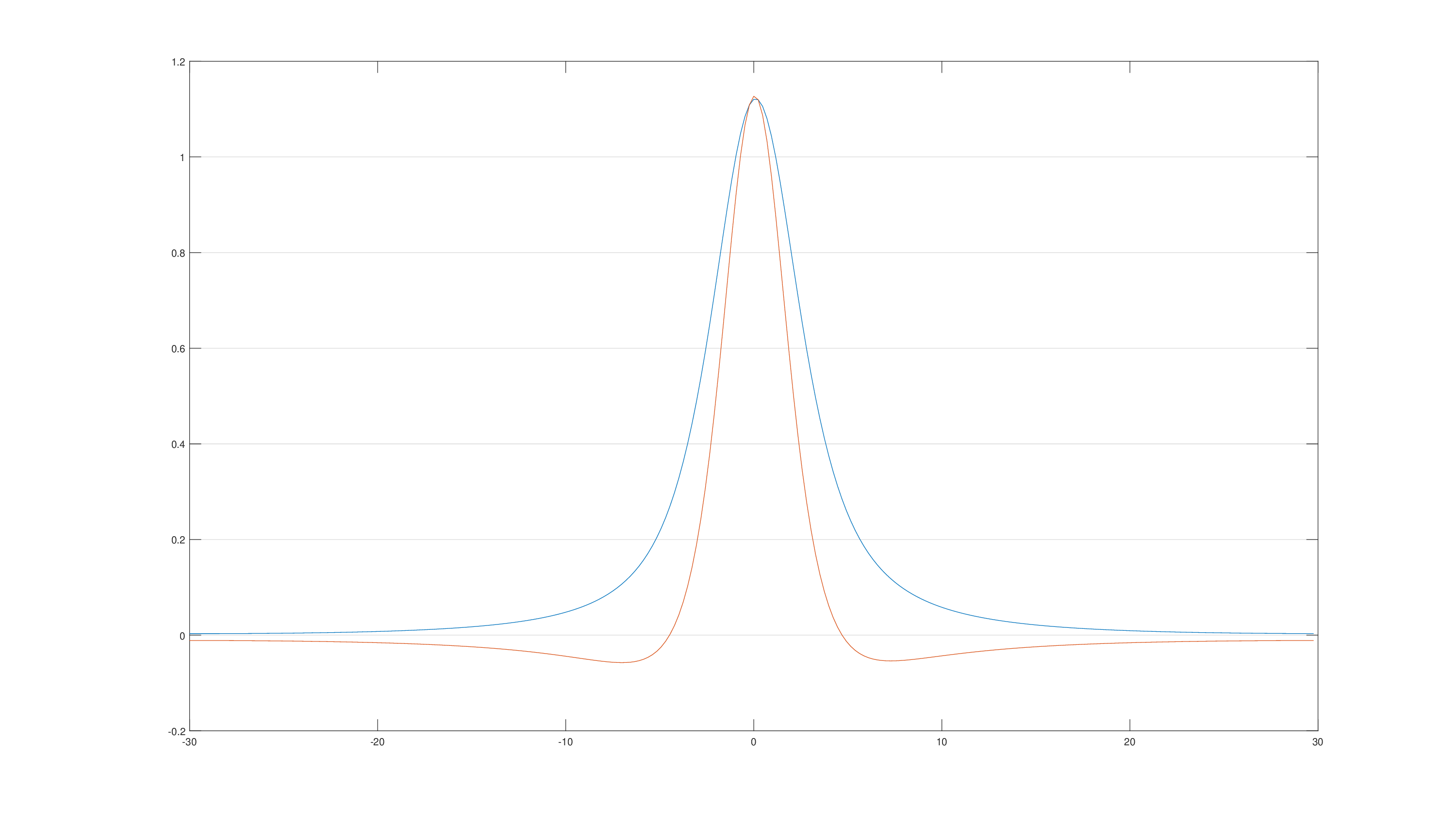}  \includegraphics{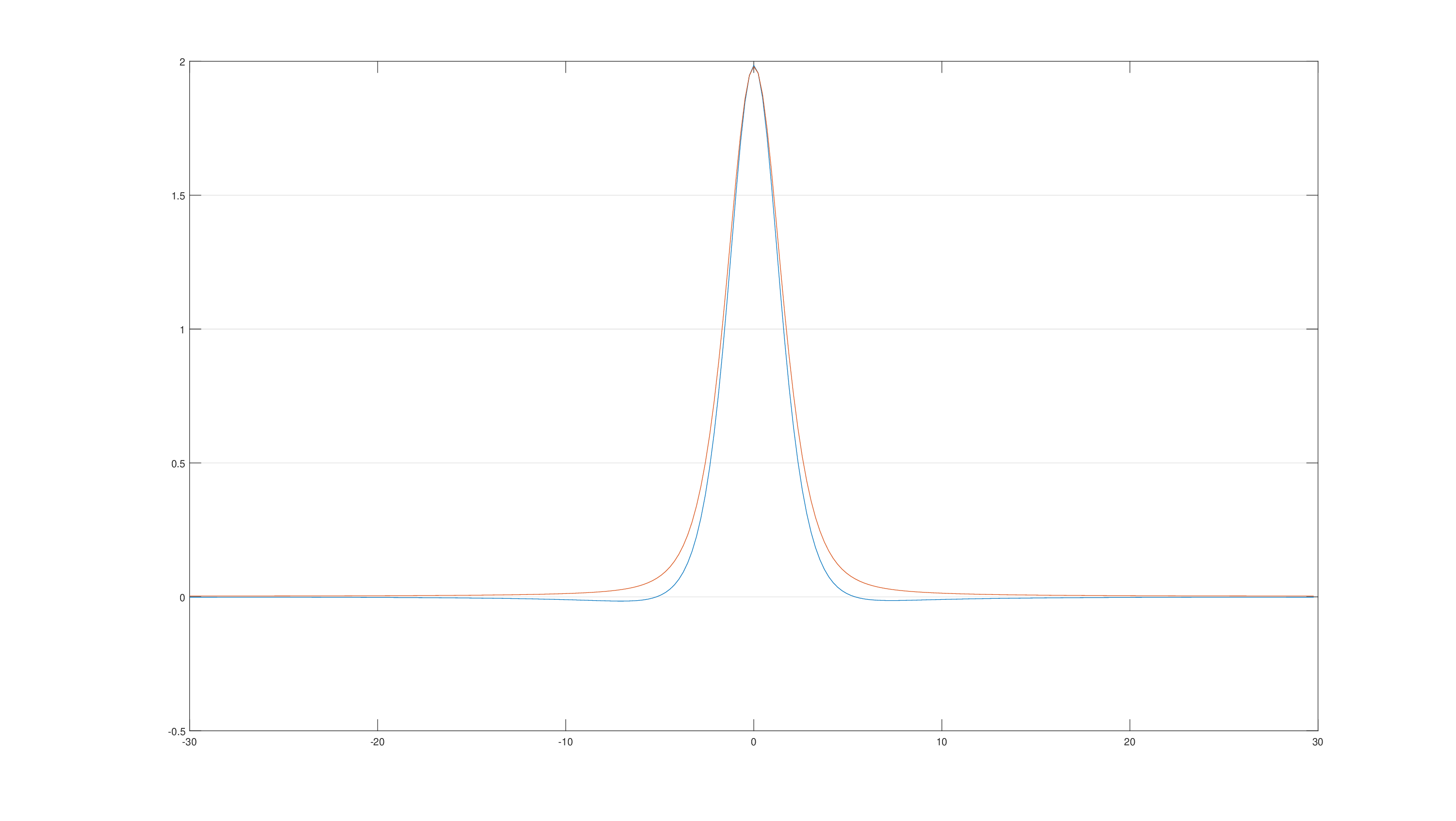}}
		\caption{The projections of traveling wave with $c=0.8$ (left) and $c=0.2$ (right) on $XY$ and $XZ$ planes.  }\label{fig-3} 
\end{center}\end{figure}

\begin{remark}
	It is worth noting that by using classical calculations, one can find the following Pohozaev identities:
	\begin{equation}\label{pohoz}
		\begin{split}
			&\frac2n\|\nabla\ff\|_\lt^2=\frac{p-1}{p+1}\|\ff\|_{L^{p+1}(\rn)}^{p+1},\\
			&\frac{n+2-p(n-2)}{2(p+1)}
			\|\ff\|_{L^{p+1}(\rn)}^{p+1}=\| \ff\|_\lt^2-\norm{\ltt\ff}_\lt^2,
			\\&
			\frac{n+2-p(n-2)}{n(p-1)}\|\nabla\ff\|_\lt^2
			=\|\ff\|_\lt^2-\norm{\ltt\ff}_\lt^2 .
		\end{split}
	\end{equation}
	These identities will be helpful in our instability analysis. 
\end{remark}
To study the stability of traveling waves of \eqref{traveling}, we observe that \eqref{hbouss} can be equivalently represented as a system:
\begin{equation}\label{system}
	\begin{cases}
		&	u_t=(-\de)^{\frac12}v,\\
		&v_t=(-\de)^{\frac12}(u-\de u-|u|^{p-1}u). 
	\end{cases}
\end{equation}
In this form, the momentum $\F$ and the energy $\E$ are expressed as:
\[
\E(\vec u)=\frac12\int_\rn|v|^2+|u|^2+|\nabla u|^2\dd x-\frac{1}{p+1}\int_\rn|u|^{p+1}\dd x,
\quad
\F(\vec u)= \int_\rn v\nabla((-\de)^{-\frac12}u)\dd x.
\]
Additionally, system \eqref{system}   possesses  the Hamiltonian structure 
$\vec u_t=\njj \E'(t)$, where $\vec{u}=(u,v)^T$ and
\[
\njj=\begin{bmatrix}
	0&(-\de)^{\frac12}\\
	-(-\de)^{\frac12}&0
\end{bmatrix}.
\]

We next turn our attention to establishing an  alternative  variational characterization of traveling waves.
Define, for $\vec\ff=(\ff,\psi)$, the action functional 
\begin{equation}\label{vect-S}
	\begin{split}
		S_{\vec c}(\vec \ff)&=\E(\vec\ff)+\vc\cdot\F(\vec\ff)
		=\ns  (\ff)+\frac12\norm{\L_\vc^{\frac12}\ff+\psi}_\lt^2,
	\end{split}
\end{equation} 
where $\ns=J-K$.
Given a traveling wave(ground state) $\ff\in G(\vec c)$, the corresponding traveling wave solution of \eqref{system}  takes the form $(\ff(x+\vec ct),-\ltt\ff(x+\vec ct))$, so we define
$\vec\ff=(\ff ,-\L_\vc^{\frac12}\ff)$ and set
\[
\ngg(\vec c)=\left\{\vec \ff=(\ff ,-\L_\vc^{\frac12}\ff)\in \x,\;\ff\in G(\vec c)\right\},
\]
where $\x=H^1(\rn)\times\lt$.
It follows from this definition that any function $\vec\ff=(\ff,\psi)\in\ngg(\vec c)$   is indeed a critical point of $S_\vc$, viz.

\begin{equation}\label{criticalpoint}
	S_\vc'(\vec\ff)=\left(\begin{array}{cc}
	 \ff-\de\ff-\ff^p-\L_\vc^{\frac12}\psi \\
		\psi+\L_\vc^{\frac12}\ff
	\end{array}\right)=\vec 0.
\end{equation}

This also implies for any $\vec\ff\in\ngg(\vec c)$ that $P(\vec \ff)=0$, where $P(\vec\ff)=\la S'_{\vec c}(\vec\ff),\vec \ff\ra$. 	It is easy to see that $\vec\ff=(\ff,\psi)\in\x$ is a critical point of $S_{\vec c}$ if and only if $\ff$ is a critical point of $\ns$ and $\psi=-\L_\vc^{\frac12}\ff$.
\begin{theorem}\label{theorem-variational-2}
	Let $n\geq2$. Then $\vec\ff\in\ngg(\vec c)$ if and only if $\vec\ff\in N$ and 
	\[
	S_{\vec c}(\vec\ff)=\inf\sett{S_{\vec c}(\vec w),\;\vec w\in N},
	\]
	where
	\[
	N=\sett{\vec w\in \x\setminus\{0\},\;P(\vec w)=0}.
	\]
\end{theorem}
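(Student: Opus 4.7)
The approach is a Nehari–manifold characterization based on a clean algebraic reduction of $S_\vc$ on $N$. A direct expansion of $\langle S_{\vec c}'(\vec\ff),\vec\ff\rangle$ from \eqref{criticalpoint}, together with the identity $\langle \L_\vc^{\frac12}\psi,\ff\rangle = -\langle \L_\vc^{\frac12}\ff,\psi\rangle$ (which is built into the sign structure of \eqref{criticalpoint} and reflects the anti-self-adjoint character of $\L_\vc^{\frac12}$ on $\lt$), yields the key identity
\[
P(\vec\ff) \;=\; 2J(\ff) - (p+1) K(\ff) + \|\L_\vc^{\frac12}\ff + \psi\|_\lt^2.
\]
Setting $P(\vec\ff) = 0$ in \eqref{vect-S} then produces the reduction
\[
S_{\vec c}(\vec\ff) \;=\; \tfrac{p-1}{2} K(\ff) \qquad \text{on } N,
\]
together with $(p+1) K(\ff) \geq 2 J(\ff)$, with equality holding iff $\psi = -\L_\vc^{\frac12}\ff$.

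For the forward direction, let $\vec\ff = (\ff,-\L_\vc^{\frac12}\ff) \in \ngg(\vc)$. The Nehari-type identity $2J(\ff) = (p+1)K(\ff)$, derived in the discussion after \eqref{traveling} by testing the equation against $\ff$, gives $P(\vec\ff) = 0$, so $\ngg(\vc)\subset N$. For any $\vec w = (w,\xi)\in N$ one has $w\neq 0$, since $w = 0$ would reduce $P(\vec w)$ to $\|\xi\|_\lt^2$ and force $\vec w = \vec 0$. Hence the variational definition of $m(\vc)$ gives $J(w)\geq m(\vc)\,K(w)^{2/(p+1)}$, which combined with $(p+1)K(w)\geq 2J(w)$ produces the sharp lower bound
\[
K(w) \;\geq\; \left(\frac{2m(\vc)}{p+1}\right)^{\!\frac{p+1}{p-1}} =: K_*, \qquad S_{\vec c}(\vec w) \;\geq\; \tfrac{p-1}{2} K_* =: s_*.
\]
The normalization built into $G(\vc)$ forces $K(\ff) = K_*$ for every $\ff\in G(\vc)$, so $S_{\vec c}\equiv s_*$ on $\ngg(\vc)$ and the infimum of $S_{\vec c}$ on $N$ is attained precisely there.

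For the converse, suppose $\vec\ff = (\ff,\psi)\in N$ realizes $S_{\vec c}(\vec\ff) = s_*$. Equality in the two inequalities above forces (a) $\psi = -\L_\vc^{\frac12}\ff$ and (b) $J(\ff) = m(\vc)\,K(\ff)^{2/(p+1)}$, so $\ff$ attains the infimum $m(\vc)$. The main obstacle is the ensuing Lagrange-multiplier step: since $\ff$ minimizes $J$ on the level set $\{K = K(\ff)\}$, it satisfies $J'(\ff) = \mu K'(\ff)$ for some $\mu\in\mathbb{R}$, and pairing with $\ff$ yields $2J(\ff) = \mu(p+1)K(\ff)$. Because $P(\vec\ff) = 0$ together with (a) produces the Nehari identity $2J(\ff) = (p+1)K(\ff)$, one concludes $\mu = 1$, so $\ff$ solves \eqref{traveling}; the value $K(\ff) = K_*$ then matches the normalization in the definition of $G(\vc)$, giving $\ff\in G(\vc)$ and hence $\vec\ff = (\ff,-\L_\vc^{\frac12}\ff)\in \ngg(\vc)$. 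The delicate point is verifying $\mu = 1$ without a spurious rescaling, which relies crucially on the Nehari identity delivered by $P(\vec\ff) = 0$ and (a) being consistent with the original traveling-wave equation \eqref{traveling}.
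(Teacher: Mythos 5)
Your proof is correct, and it shares the paper's basic framework --- the identity $P=2J-(p+1)K+2q$ with $q(\vec\ff)=\frac12\|\L_\vc^{\frac12}\ff+\psi\|_\lt^2$, and the resulting reduction $S_\vc=\frac{p-1}{2}K$ on $N$ --- but both directions are executed differently. For the lower bound, the paper compares an arbitrary $\vec u\in N$ directly to a fixed ground state via the quotient inequality $J(u)/K^{2/(p+1)}(u)\ge J(\ff)/K^{2/(p+1)}(\ff)$ and a short chain of estimates; you instead extract the uniform bound $K\ge K_*=(2m(\vc)/(p+1))^{(p+1)/(p-1)}$ on all of $N$, which pins the infimum to $s_*=\frac{p-1}{2}K_*$ and transparently identifies it with the value of $S_\vc$ on $\ngg(\vc)$. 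The converse is where the routes genuinely diverge: the paper runs a Lagrange-multiplier argument on the manifold $N$ itself ($S'=\theta P'$, then $\la P'(\vec u),\vec u\ra=-(p^2-1)K(u)\neq0$ forces $\theta=0$, whence $S'(\vec u)=0$ yields $v=-\L_\vc^{\frac12}u$, followed by a scaling argument to show $J(u)\le J(u_1)$ for all competitors with the same $K$); you instead exploit the equality cases in your two inequalities to get $\psi=-\L_\vc^{\frac12}\ff$ and $J(\ff)=m(\vc)K(\ff)^{2/(p+1)}$ directly, then apply the multiplier rule to the scalar problem defining $m(\vc)$ and fix $\mu=1$ via the Nehari identity. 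Your version buys a cleaner converse --- it avoids both the nondegeneracy check $\la P'(\vec u),\vec u\ra\neq0$ and the final rescaling step, and it makes the link between $\inf_N S_\vc$ and $m(\vc)$ explicit --- at the mild cost of having to verify the equality-case bookkeeping (which you do correctly, including the needed observation that $w\neq0$ for $(w,\xi)\in N$). Both arguments are sound; no gap.
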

\begin{proof}
	First we note for $\vec\ff=(\ff,\psi)$ that $P=2J-(p+1)K+2q$, where $q(\vec\ff)=\frac12\norm{\L_\vc^{\frac12}\ff+\psi}_\lt^2$. Hence, if $\vec u=(u,v)\in N$, then
	\[
	S(\vec u)=\frac{p-1}{2}K(u)=-\frac{p-1}{p+1}J(u)+\frac{p-1}{p+1}q(\vec u).
	\]
	Now assume that $\vec\ff=(\ff,\psi)\in \ngg(\vc)$. Then $P(\vec\ff)=0$, so $\vec\ff\in N$ and $S(\vec\ff)=\frac{p-1}{p+1}J(\ff)$. Thence, by definition, we have for any $\vec u=(u,v)\in N$ that
	\[
	\frac{J(u)}{K^{\frac{2}{p+1}}(u)}\geq \frac{J(\ff)}{K^{\frac{2}{p+1}}(\ff)}.
	\]
	Thus,
	\[
	S(\vec u)\geq \frac{p-1}{p+1}J(u)
	\geq
	\frac{K^{\frac{2}{p+1}}(u)}{K^{\frac{2}{p+1}}(\ff)}\frac{p-1}{p+1}J(\ff)
	\geq S(\vec\ff).
	\]
	Conversely, if $\vec u=(u,v)$ minimizes $S$ over $N$, then  $S'(\vec u)=\theta P'(\vec u)$ for some Lagrange multiplier $\theta\in\rr$. Hence, 
	\[
	0=P(\vec u)=\la S'(\vec u),\vec u\ra=\theta\la P'(\vec u),\vec u\ra.
	\]
	Due to the fact
	\[
	\la P'(\vec u),\vec u\ra=2P(\vec u)-(p^2-1)K(u)=(1-p^2)K(u)\neq0,
	\]
	it is deduced that $\theta=0$, so that $S'(\vec u)=0$  implies that 
	\[
	0=\left\la S'(\vec u),(0,v+L^{\frac{1}{2}}_\vc u )\right\ra=2q(\vec u).
	\]
	This shows that $v=-L^{\frac{1}{2}}_\vc u$. To show $u\in G(\vc)$  and consequently $\vec u\in\ngg(\vc)$, it is enough to prove that $J(u)\leq J(u_1)$ for all $u_1$ satisfying $K(u)=K(u_1)$. For any such $u_1$, we set $\vec u_1=(u_1,-L^{\frac{1}{2}}_\vc u_1)$. Hence, we observe that
	\[
	P(s\vec u_1)=2s^2J(u_1)-(p+1)s^{p+1}K(u_1)=0
	\]
	for $s=\left(\frac{2J(u_1)}{(p+1)K(u_1)}\right)^{\frac{1}{p-1}}$. This implies that $S(\vec u)\leq S(s\vec u_1)$, and thereby
	\[
	J(u)\leq J(su_1).
	\]
	The facts $K(u)=K(u_1)$ and $2J(u)=(p+1)K(u)$ reveal that
	\[
	J(u)\leq J(su_1)=s^2J(u_1)=
	\left(\frac{2J(u_1)}{(p+1)K(u_1)}\right)^{\frac{2}{p-1}}
	J(u_1)\leq
	J(u_1).
	\]
	Therefore, $J(u)\leq J(u_1)$, and the proof is complete.
\end{proof}

\begin{remark}\label{remark-vect}
	Analogous to Theorem \ref{theorem-variational-2}, it is easy to see that if  $\ff\in H^1(\rn)$ satisfies $\PP(\ff):=\la\ns'(\ff),\ff\ra=0$  and 
	
	\begin{equation}\label{minim-1}
		\inf\{\ns(u),\;u\in H^1(\rn)\setminus\{0\},\;\PP(u)=0\}=\ns(\ff),
	\end{equation}
	then $\ff$ is a weak solution of \eqref{traveling}. Moreover, by applying the concentration-compactness principle, there exists a minimizer for \eqref{minim-1}. Moreover, 
	\[
	\begin{split}
		G_1(\vc)&:= \sett{\ff\in\ho\setminus\{0\},\;\ns'(\ff)=0,\;\ns(\ff)\leq\ns(u),\,\mbox{for all}\, u\in   \ho\setminus\{0\}\,\mbox{such that}\,\ns'(u)=0}\\&
		=\left\{\ff\in\ho,\;\ns(\ff)=m_1(\vc),\;\PP(\ff)=0\right\},
	\end{split}
	\]
	where $$m_1(\vc)=\inf\sett{\ns(u),\;u\in\ho\setminus\{0\},\;\PP(u)=0}.$$ Furthermore, $m_1(\vc)=m(\vc)$ and $G(\vc)=G_1(\vc)$. This claim is proved  by following the same lines of proof of Theorem \ref{theorem-variational-2} after replacing $S$ and $P$ with $\ns$ and $\PP$.
\end{remark}

\begin{definition}\label{dfn-1}
	The traveling wave   $\vec\ff$ of \eqref{system} is said to be
	orbitally stable if for any $\epsilon$, there exists $\delta  > 0$ such that if $\|\vec u_0-\vec\ff\|_\x<\delta$,
	then the solution $\vec u(t)\in \x$ of \eqref{system} with $\vec u(0)=\vec u_0$ exists for all $t\in\rr$ and 
	\[
	\inf_{y\in\rn}\norm{\vec u(t)-\vec\ff(\cdot-y)}_\x<\epsilon.
	\] 
	Otherwise, $\vec\ff$ is said to be orbitally unstable.
\end{definition}

Given $\vec\ff=(\ff,\psi)\in\ngg(\vec c)$, we define
$
d(\vec c)=S_{\vec c}(\vec \ff).
$
It follows from Remark \ref{remark-vect} that
\begin{equation}\label{d-est}
	\begin{split}
		d(\vec c)&=\ns(\ff)=m_1=J( \ff )-K(\ff)=\frac{p-1}{2}K(\ff)=\frac{p-1}{2}\left(\frac{2m(\vec c)}{p+1}\right)^{\frac{p+1}{p-1}}\\&=
		\frac{p-1}{n+2-p(n-2)}\left(\|\ff\|_\lt^2-\norm{\ltt\ff}_\lt^2\right)\\
		&=\frac{p-1}{2(p+1)}\left(\|\ff\|_\lt^2-\norm{\ltt\ff}_\lt^2+\|\nabla\ff\|_\lt^2\right)\\&=\frac{p-1}{n+2-p(n-2)}J(\ff)\\
		&\geq
		\frac{p-1}{2(p+1)}(1-|\vc|^2)\|\ff\|_{H^1(\rn)}^2\\
		&=\frac{p-1}{2(p+1)}(1-|\vc|^2)\left(J(\ff)
		+|\vc|^2\norm{\ltt\ff}_\lt^2\right).
	\end{split}
\end{equation}

So $d$ is independent of the choice of $\vec\ff\in\ngg(\vc)$. If $w=\L_{\vec c}f$, then $(\vec c\cdot \nabla)^2 f=-\de w$, so that by  the invariance of the Laplacian under orthogonal transformations, it is observed that $d(\vec c)=\bar{d}(|\vec c|)$, where
\[
\bar{d}(c)=\E(\vec\ff)-c\int_\rn\psi(\de^{-\frac12}\ff)_{x_1}\dd x
\]
with $\vec\ff=( \ff,\psi)$ and $|c|<1$. Therefore $d$ is radial.

\begin{lemma}\label{lemma-diff-dc}
	Let $\vc=\zeta\vs$ with $\vs\in B_1(0)\subset\rn$, and $d_\vs=d(\zeta\vs)$ for $\zeta\in [0,1)$. If $\vec\ff=(\ff,\psi)\in\ngg(\vec c)$, then $d_\vs$  is continuous and strictly decreasing on $[0,1)$. Moreover, it is differentiable  at almost every $\zeta\in [0,1)$.  We have at points of differentiability that 
	\[
	d_\vs'(\zeta)=-\zeta \norm{\L_\vs^{\frac12}\ff} _\lt^2
	\]
	for any $\ff\in G_1(\vc)$. Moreover, we have in general for any $\vc\in B_1(0)$ that
	\[
	\nabla_\vc d=-\F(\vec\ff).
	\]
\end{lemma}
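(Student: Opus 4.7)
The plan is to deduce all four conclusions from the variational identity $d(\vec c)=S_{\vec c}(\vec\ff)=\tfrac{p-1}{p+1}J(\ff)$, valid for any ground state pair $\vec\ff=(\ff,-\L_\vc^{\frac12}\ff)\in\ngg(\vec c)$, combined with an envelope-theorem comparison that exploits the fact that $\vec\ff$ is a constrained critical point of $S_{\vec c}$.

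\emph{Continuity and strict monotonicity.} From Remark~\ref{remark-vect} one has $d(\vec c)=C\,m(\vec c)^{(p+1)/(p-1)}$ with $m(\vec c)=\inf_{u\ne 0}J(u)/K(u)^{2/(p+1)}$. Since the only $\vec c$-dependence in $J$ sits in the quadratic form $-\tfrac12\|\L_\vc^{\frac12}u\|_\lt^2$, whose symbol $(\vec c\cdot\xi)^2/|\xi|^2$ is continuous in $\vec c$ and dominated by $|\vec c|^2$, the map $\vec c\mapsto J(u)$ is continuous for each fixed $u\in\ho$. Testing with a ground state at $\vec c$ gives $\limsup_{\vec c'\to\vec c}m(\vec c')\le m(\vec c)$; the reverse inequality uses that any sequence of ground states at $\vec c_n\to\vec c$ is $\ho$-bounded via \eqref{d-est} and admits a strongly convergent subsequence (modulo translations) by the concentration-compactness argument of Theorem~\ref{exits} (or directly from Remark~\ref{cpt-embed-rem}). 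For strict decrease along the ray $\zeta\mapsto\zeta\vs$, the identity
\[
J_{\zeta_1\vs}(u)-J_{\zeta_2\vs}(u)=\tfrac12(\zeta_2^2-\zeta_1^2)\|\L_\vs^{\frac12}u\|_\lt^2,
\]
evaluated at any $\ff_1\in G_1(\zeta_1\vs)$ with $\zeta_1<\zeta_2$, together with the strict positivity of $\|\L_\vs^{\frac12}\ff_1\|_\lt^2$ (since $\ff_1\ne 0$ has a Fourier transform which can only vanish on a set of measure zero), yields $m(\zeta_2\vs)<m(\zeta_1\vs)$. Differentiability a.e.\ of the monotone function $d_\vs$ is then Lebesgue's theorem.

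\emph{Envelope computation of $d_\vs'(\zeta_0)$.} Fix $\zeta_0\in[0,1)$ a point of differentiability and $\ff_0\in G_1(\zeta_0\vs)$. For $\zeta$ near $\zeta_0$, define the rescaling $t(\zeta)$ by $t(\zeta)^{p-1}=2J_{\zeta\vs}(\ff_0)/[(p+1)K(\ff_0)]$, so that $t(\zeta_0)=1$ and $t(\zeta)\ff_0$ lies on the constraint manifold $\{\PP_{\zeta\vs}=0\}$. Minimality gives
\[
d(\zeta\vs)\le\ns_{\zeta\vs}(t(\zeta)\ff_0)=\tfrac{p-1}{p+1}\Bigl(\tfrac{2}{(p+1)K(\ff_0)}\Bigr)^{2/(p-1)}J_{\zeta\vs}(\ff_0)^{(p+1)/(p-1)}.
\]
The first-order Taylor expansion $J_{\zeta\vs}(\ff_0)=J_{\zeta_0\vs}(\ff_0)-\zeta_0(\zeta-\zeta_0)\|\L_\vs^{\frac12}\ff_0\|_\lt^2+O((\zeta-\zeta_0)^2)$, together with the ground-state identity $2J_{\zeta_0\vs}(\ff_0)=(p+1)K(\ff_0)$, makes the outer power telescope to
\[
d(\zeta\vs)-d(\zeta_0\vs)\le-\zeta_0(\zeta-\zeta_0)\|\L_\vs^{\frac12}\ff_0\|_\lt^2+o(\zeta-\zeta_0).
\]
Exchanging the roles of $\zeta$ and $\zeta_0$ (using a ground state $\ff_\zeta\in G_1(\zeta\vs)$ as trial function in the variational problem at $\zeta_0\vs$) produces the matching lower bound in terms of $\|\L_\vs^{\frac12}\ff_\zeta\|_\lt^2$. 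Strong convergence $\ff_\zeta\to\ff_0$ in $\ho$ along subsequences (modulo translations), supplied by the continuity step above, collapses the two bounds to the stated identity, and the independence of the value on the choice of $\ff_0$ follows because $d_\vs'(\zeta_0)$ is a single real number.

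\emph{Full gradient formula.} The same envelope argument applied to a general perturbation $\vec c\mapsto\vec c+h\vec v$ uses $S_{\vec c}(\vec\ff)=\E(\vec\ff)+\vec c\cdot\F(\vec\ff)$ together with the critical-point identity \eqref{criticalpoint}: the implicit variation of $\vec\ff$ with $\vec c$ drops out (Lagrange condition on $N$), while the explicit $\vec c$-derivative of $S_{\vec c}$ at fixed $\vec\ff$ is $\F(\vec\ff)$, yielding $\nabla_{\vec c}d=-\F(\vec\ff)$. Consistency with the scalar formula is immediate from Plancherel: with $\psi=-\L_\vc^{\frac12}\ff$ one computes $\vec c\cdot\F(\vec\ff)=-\|\L_\vc^{\frac12}\ff\|_\lt^2$, and specializing to $\vec c=\zeta_0\vs$ recovers $\vs\cdot\nabla_{\vec c}d=-\zeta_0\|\L_\vs^{\frac12}\ff\|_\lt^2$. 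The main obstacle throughout is the absence of a smooth (or even continuous) selection of ground states $\vec c\mapsto\vec\ff_{\vec c}$, since uniqueness of ground states is not established; this is sidestepped by restricting the envelope argument to points of differentiability and invoking subsequential compactness of ground states in the relevant limits.
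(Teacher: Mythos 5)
Your argument rests on the same core device as the paper's: starting from a ground state $\ff_0$ at speed $\zeta_0\vs$, rescale it (your $t(\zeta)$, the paper's $s_1$) so that it lies on the constraint set $\{\PP_{\zeta\vs}=0\}$ at the nearby speed, and read off a bound on the difference quotient of $d_\vs$ from the minimizing property of $d$. The surrounding bookkeeping differs: you get continuity from compactness of families of ground states and a.e.\ differentiability from Lebesgue's theorem for monotone functions, while the paper derives two-sided difference-quotient estimates in terms of $\al_\pm(\zeta)=\pm\sup\{\pm\|\L_\vs^{\frac12}\ff\|_\lt^2:\ff\in G_1(\zeta\vs)\}$, obtaining local Lipschitz continuity and the identification $d_\vs'(\zeta^\pm)=-\zeta\al_\pm(\zeta)$ (following Levandosky). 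The paper's version buys a precise criterion — differentiability at $\zeta$ holds iff $\al_+(\zeta)=\al_-(\zeta)$ — which is exactly why the stated formula is independent of the choice of $\ff\in G_1(\vc)$. Your monotonicity step (the quadratic identity for $J_{\zeta\vs}$ plus $\|\L_\vs^{\frac12}\ff_1\|_\lt^2>0$, which holds because $\hat\ff_1$ cannot be supported in the null hyperplane of the multiplier) is a clean substitute for the paper's $\PP(\ff_1;\vc_2)<0$ argument.

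Two caveats. First, your ``matching lower bound'' step has a gap: the subsequential limit of $\ff_\zeta$ is \emph{some} element of $G_1(\zeta_0\vs)$, not necessarily the $\ff_0$ you fixed, so collapsing the two bounds does not by itself identify $d_\vs'(\zeta_0)$ with $-\zeta_0\|\L_\vs^{\frac12}\ff_0\|_\lt^2$. Fortunately the step is superfluous: your upper bound $d(\zeta\vs)-d(\zeta_0\vs)\le-\zeta_0(\zeta-\zeta_0)\|\L_\vs^{\frac12}\ff_0\|_\lt^2+o(|\zeta-\zeta_0|)$ is valid for $\zeta$ on \emph{both} sides of $\zeta_0$, and dividing by $\zeta-\zeta_0$ of either sign squeezes the (assumed to exist) derivative to the claimed value for every choice of $\ff_0$. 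Second, the signs in your gradient paragraph do not close: the envelope computation you describe gives $\nabla_\vc d=+\F(\vec\ff)$ (the explicit $\vc$-derivative of $S_\vc=\E+\vc\cdot\F$), and since \eqref{vect-S} forces $\vc\cdot\F(\vec\ff)=-\|\L_\vc^{\frac12}\ff\|_\lt^2$, it is the $+$ sign that is consistent with $d_\vs'(\zeta)=-\zeta\|\L_\vs^{\frac12}\ff\|_\lt^2$ and with strict decrease; the asserted $\nabla_\vc d=-\F(\vec\ff)$ would make $d$ increasing along rays. This discrepancy is already present in the paper (compare the Lemma with the identity $d_\vs'(|\vc|)=\vs\cdot\F(\vec\ff)$ invoked in Lemma \ref{lem-tayl}), so it is a bookkeeping issue rather than a defect of your method — but your ``consistency check'' as written does not actually check. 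Note finally that the full gradient statement presupposes genuine differentiability of $\vc\mapsto d(\vc)$, which neither you nor the paper establishes beyond the a.e.\ one-dimensional statement; radiality of $d$ at least reduces it to that case.
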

\begin{proof}
	To clarify the dependence on the wave velocity $\vc$, we write $J(\ff;\vc)$, $\ns(\ff;\vc)$ and $\PP(\ff;\vc)$.
	
	We know for any $\ff\in G_1(\vc)$ that
	\[
	d_\vs(\zeta)=\ns(\ff;\vc)=\frac{p-1}{p+1}J(\ff;\vc) 
	=\frac{(p-1)(1-\zeta^2)}{2(p+1)}\norm{\L_\vs^{\frac12}\ff}_\lt^2.
	\]
	Thus,
	\[
	\frac{2(p+1)}{(p-1)(1-\zeta^2)}d_\vs(\zeta)
	\geq\al_+(\zeta)\geq\al_-(\zeta)\geq0,
	\]
	where\[
	\al_\pm(\zeta)=\pm\sup\left\{\pm\left\|\L_\vs^{\frac12}\ff\right\|_\lt^2,\;\ff\in G_1(\vc)\right\}.
	\] 
 Let $0\leq\zeta_1<\zeta_2<\zeta_0<1$ and $\ff_j\in G_1(\vc_j)$ for $j=1,2$,  where $\vc_j=\zeta_j\vs$. Then, it follows from $\PP(\ff_1;\vc_1)=0$ that
	\[
	\PP(\ff_1;\vc_2)=\PP(\ff_1,\vc_1)+(\zeta_1^2-\zeta_2^2)\left\|\L_\vs^{\frac12}\ff_1\right\|_\lt^2<0.
	\]
	Hence, we have
	\[
\begin{split}
		d(\vc_2)&\leq\ns(\ff_1;\vc_1)+\frac12\frac{(p-1)(\zeta_1^2-\zeta_2^2)}{p+1}\left\|\L_\vs^{\frac12}\ff\right\|_\lt^2\\&
	=d(\vc_1)+ \frac12\frac{(p-1)(\zeta_1^2-\zeta_2^2)}{p+1}\left\|\L_\vs^{\frac12}\ff\right\|_\lt^2\\&<d(\vc_1).
\end{split}
	\]
	This means that $d_\vs$ is strictly decreasing, and then differentiable almost everywhere in $[0,1)$.
	
	To show the second claim of the lemma, we first observe that
	\[
	\PP(s\ff_1;\vc_2)=2s^2J(\ff;\vc_2-2s^{p+1})J(\ff_1;\vc_1)
	\]
	for any $s\in(0,1)$. Then, $\PP(s;\vc_2)>0$ if $s^{p-1}<\frac{J(\ff_1;\vc_2)}{J(\ff_1;\vc_1)}$. Hence, there exists $\left(\frac{J(\ff_1;\vc_2)}{J(\ff_1;\vc_1)}\right)^{\frac{1}{p-1}}<s_1<1$ such that $\PP(s_1\ff_1;\vc_2)=0$ and
	\[
	d(\vc_2)\leq\ns(s_1\ff_1;\vs_1)
	-\frac{s_1^2(\zeta_2^2-\zeta_1^2)}{2}\left\|\L_\vs^{\frac12}\ff_1\right\|_\lt^2.
	\]
	Since $\ns(s\ff_2;\vc_1)$ attains its maximum at $s=1$, then $\ns(s_1\ff_1;\vc_1)\leq\ns(\ff_1;\vc_1)=d(\vc_1)$. Moreover, 
	\[
	d(\vc_2)\leq d(\vc_1)-\frac{s_1^2(\zeta_2^2-\zeta_1^2)}{2}\left\|\L_\vs^{\frac12}\ff_1\right\|_\lt^2.
	\]
	Using the fact $\left|\L_\vc^{\frac12}\right|\leq|\vc|$, we get $s_1\geq\left(1-\frac{\zeta_2^2-\zeta_1^2}{1-\zeta_0^2}\right)^{\frac{1}{p-1}}$ and
	that
	\[
	d(\vc_2)\leq d(\vc_1)
	-\frac{ (\zeta_2^2-\zeta_1^2)}{2}\left(1-\frac{\zeta_2^2-\zeta_1^2}{1-\zeta_0^2}\right)^{\frac{1}{p-1}}
	\left\|\L_\vs^{\frac12}\ff_1\right\|_\lt^2.
	\]
	This implies that
	\[
	d_\vs(\zeta_2)\leq d_\vs(\zeta_1)
	-\frac{ (\zeta_2^2-\zeta_1^2)}{2}\left(1-\frac{\zeta_2^2-\zeta_1^2}{1-\zeta_0^2}\right)^{\frac{1}{p-1}}\al_+(\zeta_1).
	\]
	In a similar fashion, we obtain that
	\[
	d_\vs(\zeta_1)\leq d_\vs(\zeta_2)
	+\frac{ (\zeta_2^2-\zeta_1^2)}{2}\left(1+\frac{\zeta_2^2-\zeta_1^2}{1-\zeta_0^2}\right)^{\frac{1}{p-1}}\al_-(\zeta_2).
	\]
	Overall, we are led to
	\[
	-\frac{\zeta_1+\zeta_2}{2}
	\left(1+\frac{\zeta_2^2-\zeta_1^2}{1-\zeta_0^2}\right)^{\frac{1}{p-1}}\al_-(\zeta_2)
	\leq\frac{d_\vs(\zeta_2)-d_\vs(\zeta_1)}{\zeta_2-\zeta_1}
	\leq 
	-\frac{\zeta_1+\zeta_2}{2}
	\left(1-\frac{\zeta_2^2-\zeta_1^2}{1-\zeta_0^2}\right)^{\frac{1}{p-1}}\al_+(\zeta_1)
	\]
	for $\zeta_1<\zeta_2<\zeta_0$.
	This reveals that $d_\vs$ is locally Lipschitz, and then continuous on $[0,1)$.  Now, by using the proof of Lemma 4.3 in \cite{leva-1998}, after natural modifications, we have
	$$
	\limsup _{\zeta_2\to\zeta_1}\al_-(\zeta_2)\leq \al_+(\zeta_1)
	$$ 
	and 
	$$
	\al_-(\zeta_2)\leq\liminf _{\zeta_1\to\zeta_2}\al_+(\zeta_1),
	$$ 
	so that $d_\vs'(\zeta^\pm)=-\zeta\al_\pm(\zeta)$. Consequently, $d_\vs$ is differentiable if and only if $\al_+(\zeta)=\al_-(\zeta)$. Hence, in the point of differentiability, we have $d_\vs'(\zeta)=-\zeta\|\L_\vs^{\frac12}\ff\|_\lt^2$ for any $\ff\in G_1(\zeta\vs)$.
\end{proof}

\begin{corollary}
	It holds for any $\vc\in B_1(0)$ that
	\[
	d(\vc)\geq(1-|\vc|^2)^{\frac{p+1}{p-1}}d(0).
	\]
\end{corollary}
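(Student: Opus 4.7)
The plan is to reduce the corollary to the multiplicative comparison $m(\vc) \ge (1-|\vc|^2)\,m(0)$ between the variational infima, since the identity
\[
d(\vc) = \frac{p-1}{2}\left(\frac{2}{p+1}\right)^{\frac{p+1}{p-1}} m(\vc)^{\frac{p+1}{p-1}},
\]
recorded in \eqref{d-est} (together with the same identity at $\vc=\vec 0$), converts any such bound on $m$ into the claimed bound on $d$ after raising to the positive power $(p+1)/(p-1)$.

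To establish the comparison, I would argue test function by test function. The Fourier multiplier of $\L_\vc$ is bounded pointwise by $|\vc|^2$ thanks to Cauchy--Schwarz applied to $(\vc\cdot\xi)^2/|\xi|^2$, so Plancherel yields the operator estimate $\|\L_\vc^{1/2}u\|_\lt^2 \le |\vc|^2 \|u\|_\lt^2$ for every $u\in\ho$. Inserting this into the definition of $J$ and noting that $\|\nabla u\|_\lt^2 \ge (1-|\vc|^2)\|\nabla u\|_\lt^2$ (since $|\vc|<1$) gives the pointwise lower bound
\[
J(u)\ge \tfrac{1}{2}\bigl(\|\nabla u\|_\lt^2 + (1-|\vc|^2)\|u\|_\lt^2\bigr) \ge (1-|\vc|^2)\,J_0(u),
\]
where $J_0$ denotes the functional $J$ evaluated at $\vc=\vec 0$. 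Dividing by $K(u)^{2/(p+1)}$ and taking the infimum over $u\in\ho\setminus\{0\}$ transfers this inequality to the ratios, producing the desired $m(\vc) \ge (1-|\vc|^2)\,m(0)$.

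Plugging this comparison back into the displayed formula for $d$ and using its analogue at the origin closes the argument. I do not expect any serious obstacle: the corollary is essentially a one-line consequence of the elementary operator bound $\|\L_\vc^{1/2}\|_{\lt\to\lt}\le|\vc|$, combined with the homogeneity linking $d(\vc)$ to $m(\vc)$. The only minor point worth checking along the way is that $m(0)$ is a finite positive number so that the formula for $d(0)$ is meaningful, which however is already built into the existence/coercivity framework of Theorem \ref{exits}.
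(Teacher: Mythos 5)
Your proof is correct, but it takes a genuinely different route from the paper. The paper deduces the bound by combining the chain of estimates in \eqref{d-est} with the formula $d_\vs'(\zeta)=-\zeta\|\L_\vs^{1/2}\ff\|_\lt^2$ from Lemma \ref{lemma-diff-dc}, which yields the pointwise inequality $d(c)\geq(1-c^2)\bigl(d(c)-\frac{p-1}{2(p+1)}c\,d'(c)\bigr)$, hence the differential inequality $d'(c)\geq-\frac{2c(p+1)}{(p-1)(1-c^2)}d(c)$; integrating this from $0$ to $c$ gives the stated power of $(1-c^2)$. You instead compare the two minimization problems directly: the operator bound $\|\L_\vc^{1/2}u\|_\lt^2\leq|\vc|^2\|u\|_\lt^2$ gives $J(u)\geq(1-|\vc|^2)J_0(u)$ for every test function, hence $m(\vc)\geq(1-|\vc|^2)m(0)$, and the homogeneity identity $d(\vc)=\frac{p-1}{2}\bigl(\frac{2m(\vc)}{p+1}\bigr)^{\frac{p+1}{p-1}}$ from \eqref{d-est} converts this into exactly the claimed exponent. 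Your argument is more elementary and self-contained: it avoids relying on the monotonicity, local Lipschitz continuity and almost-everywhere differentiability of $d_\vs$ established in Lemma \ref{lemma-diff-dc}, and it sidesteps the (mild but real) care needed to integrate a differential inequality that holds only at points of differentiability. What the paper's route buys is coherence with the rest of the stability analysis, where $d'$ and $d''$ are the central objects; your route buys robustness, since it would survive even if the regularity of $d$ were weaker than claimed. Both arguments are valid.
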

\begin{proof}
	We assume again without loss of generality that $\vc=(c,0,\cdots,0)$.	It follows from \eqref{d-est} and Lemma \ref{lemma-diff-dc} that
	\[
	d(c)\geq(1-c^2)\left(d(c)-\frac{p-1}{2(p+1)}cd'(c)\right)
	\]
	which implies that
	\[
	d'(c)\geq -\frac{2c(p+1)}{(p-1)(1-c^2)}d(c).
	\]
	And the proof is complete.
\end{proof}
\begin{lemma}\label{lem-tayl}
	Let $\vc=\zeta\vs$ be as in Lemma \ref{lemma-diff-dc}, and suppose that $d_\vs''(|\vc|)>0$.
	Then there exists $\epsilon>0$ such that for all $\ff\in\ngg(\vc)$ and any $\vec u\in U_\epsilon(\ngg(\vc))\subset\x$ we have
	\[
	\E(\vec u)-\E(\vec\ff)+c(\vec u)\vs\cdot(\F(\vec u)-\F(\vec\ff))
	\geq \frac14d_\vs''(|\vc|)\paar{c_\vs(\vec u)-|\vc|}^2.
	\]
\end{lemma}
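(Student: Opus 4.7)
The plan is to introduce an adapted speed $c_\vs(\vec u)$ by an implicit function argument, rewrite the left-hand side via a tangent-line reformulation of the Lyapunov functional, and then combine a Taylor expansion of $d_\vs$ at $|\vc|$ with a Legendre-type maximality property. The essential structural input is that $d_\vs'$ is strictly monotone near $|\vc|$ (which follows from the convexity hypothesis $d_\vs''(|\vc|)>0$) together with the identity $\nabla_\vc d=-\F(\vec\ff)$ from Lemma \ref{lemma-diff-dc}.

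First, since $\vs\cdot\F(\vec\ff)=d_\vs'(|\vc|)$ for every $\vec\ff\in\ngg(\vc)$ and $\vs\cdot\F$ is continuous on $\x$, the implicit function theorem defines a continuous $c_\vs\colon U_\epsilon(\ngg(\vc))\to\mathbb{R}$ by $d_\vs'(c_\vs(\vec u))=\vs\cdot\F(\vec u)$, with $c_\vs(\vec\ff)=|\vc|$ for $\vec\ff\in\ngg(\vc)$. The affine function $B(\zeta):=\E(\vec\ff)+\zeta\vs\cdot\F(\vec\ff)$ satisfies $B(|\vc|)=S_\vc(\vec\ff)=d_\vs(|\vc|)$ and $B'(\zeta)\equiv\vs\cdot\F(\vec\ff)=d_\vs'(|\vc|)$, so $B$ is exactly the tangent line to $d_\vs$ at $|\vc|$. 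Setting $A(\zeta):=\E(\vec u)+\zeta\vs\cdot\F(\vec u)$, which is linear in $\zeta$ and satisfies $A(c_\vs(\vec u))=S_{c_\vs(\vec u)\vs}(\vec u)$, the left-hand side of the claim becomes
\[
A(c_\vs(\vec u))-B(c_\vs(\vec u))=[A(c_\vs(\vec u))-d_\vs(c_\vs(\vec u))]+[d_\vs(c_\vs(\vec u))-B(c_\vs(\vec u))].
\]

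For the second bracket, Taylor's theorem applied to $d_\vs$ at $|\vc|$ gives $d_\vs(c_\vs(\vec u))-B(c_\vs(\vec u))=\tfrac12 d_\vs''(\eta)(c_\vs(\vec u)-|\vc|)^2$ for some $\eta$ between $|\vc|$ and $c_\vs(\vec u)$; by continuity of $d_\vs''$ this is at least $\tfrac14 d_\vs''(|\vc|)(c_\vs(\vec u)-|\vc|)^2$ once $\epsilon$ is small enough. For the first bracket, the function $A-d_\vs$ is linear minus a locally convex function, hence concave near $|\vc|$, and its critical point is exactly $\zeta=c_\vs(\vec u)$ by the defining relation of $c_\vs$. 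Maximality therefore gives $A(c_\vs(\vec u))-d_\vs(c_\vs(\vec u))\geq A(|\vc|)-d_\vs(|\vc|)=S_\vc(\vec u)-d(\vc)$.

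The main obstacle is to control the defect $S_\vc(\vec u)-d(\vc)$: for generic $\vec u\in U_\epsilon(\ngg(\vc))$ this quantity is of size $O(\|\vec u-\vec\ff\|_\x^2)$ with no definite sign, since $\vec\ff$ is only a constrained minimizer of $S_\vc$ (in the sense of Theorem \ref{theorem-variational-2}) rather than a genuine local minimizer on $\x$. The crux is to show that the negative part of this defect is dominated by $\beta(c_\vs(\vec u)-|\vc|)^2$ with $\beta<\tfrac14 d_\vs''(|\vc|)$; this is a Grillakis-Shatah-Strauss-type coercivity estimate and is precisely what the strict convexity $d_\vs''(|\vc|)>0$ is designed to encode. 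The argument splits $\vec u-\vec\ff$ into components tangential and transverse to the natural projection onto the Nehari manifold at speed $c_\vs(\vec u)\vs$: the tangential direction contributes at leading order to $c_\vs(\vec u)-|\vc|$, while the transverse directions are absorbed by the positivity of $S_\vc''(\vec\ff)$ restricted to the appropriate subspace (obtainable from the minimization characterization of $\vec\ff$ and, if needed, via the perturbation-theoretic argument alluded to in Section \ref{section-var-char}). Once this spectral control is in place, adding it to the Taylor piece yields the claimed bound $\tfrac14 d_\vs''(|\vc|)(c_\vs(\vec u)-|\vc|)^2$.
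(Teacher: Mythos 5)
Your overall architecture (an adapted speed, a tangent-line reformulation, and a Taylor expansion of $d_\vs$ at $|\vc|$) is the right family of argument, and your second bracket $d_\vs(\zeta_u)-B(\zeta_u)\geq \tfrac14 d_\vs''(|\vc|)(\zeta_u-|\vc|)^2$ is exactly the paper's Taylor step. But there is a genuine gap at the first bracket, and it is located precisely where you yourself flag "the crux". You define $c_\vs(\vec u)$ by the Legendre-type relation $d_\vs'(c_\vs(\vec u))=\vs\cdot\F(\vec u)$, and with that choice the inequality $A(\zeta_u)\geq d_\vs(\zeta_u)$, i.e. $S_{\zeta_u\vs}(\vec u)\geq d(\zeta_u\vs)$, is simply not available: you reduce it to controlling the sign-indefinite defect $\svc(\vec u)-d(\vc)$ by a coercivity estimate for $S''_\vc(\vec\ff)$ on a subspace. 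That estimate is (a) not proved, (b) exactly the spectral information the paper says is out of reach for general $\vc\neq 0$ (it is only obtained for small $|\vc|$ by perturbation, in Lemmas \ref{lem2.1}--\ref{lem3.1}, leading to the separate Theorem \ref{stab-theo}), and (c) not even applicable in the form you invoke: the lemma is asserted for \emph{all} $\vec u\in U_\epsilon(\ngg(\vc))$ with no momentum constraint, whereas the Grillakis--Shatah--Strauss mechanism neutralizes the negative direction of the Hessian only on the codimension-one set $\F(\vec u)=\F(\vec\ff)$. Taking $\vec u=(1+\delta)\vec\ff$ shows $\svc(\vec u)-d(\vc)<0$ at order $\delta^2$, so the needed comparison with $\tfrac14 d_\vs''(|\vc|)(\zeta_u-|\vc|)^2$ is a quantitative claim you never establish.

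The paper closes this step without any spectral input by choosing the adapted speed differently: $c_\vs(\vec u)=d_\vs^{-1}\bigl(\tfrac{p-1}{2}K(u)\bigr)\vs$, using only that $d_\vs$ is continuous and strictly decreasing (Lemma \ref{lemma-diff-dc}). This forces $d(c_\vs(\vec u))=\tfrac{p-1}{2}K(u)$ by construction; if one had $\PP(u;c_\vs(u))<0$ one could scale $u$ down onto the Nehari manifold and contradict the definition of $d$, so $\PP(u;c_\vs(u))\geq 0$ and hence $\ns(u;c_\vs(u))=\tfrac12\PP(u;c_\vs(u))+\tfrac{p-1}{2}K(u)\geq d(c_\vs(u))$; finally $S_{c_\vs(u)}(\vec u)\geq \ns(u;c_\vs(u))$ because $\svc-\ns=\tfrac12\|\L_\vc^{1/2}u+v\|_\lt^2\geq 0$. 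Combining this with the Taylor lower bound yields the lemma for every nearby $\vec u$. A secondary issue with your route: defining $c_\vs(\vec u)$ by inverting $d_\vs'$ requires $d_\vs'$ to be continuous and strictly monotone on a neighborhood of $|\vc|$, while Lemma \ref{lemma-diff-dc} only gives local Lipschitz continuity of $d_\vs$ and differentiability almost everywhere; the paper's inversion of $d_\vs$ itself avoids this regularity demand as well.
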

\begin{proof}
	Let $\vec\ff=(\ff,-\L_\vc^{\frac12}\ff)\in\ngg(\vec c)$. Since $d_\vs$ is strictly decreasing and $d(\vc)=\frac{p-1}{2}K(\ff)$,  we may associate to any $u\in U_\epsilon(G_1(\vc))\subset\ho$ the speed $c_\vs(\vec u)=d_\vs^{-1}(\frac{p-1}{2}K(u))\vs$. It is clear that $\PP(u;c_\vs(u))\geq0$ and $\ns(u; c_\vs(u))\geq d(c_\vs(u))$. Moreover, since we have from Lemma \ref{lemma-diff-dc} that $d_\vs'(|\vc|)=\vs\cdot\F(\vec\ff)$, then
	\[
	\begin{split}
		d_\vs(\zeta)&\geq d_\vs(|\vc|)+\vs\cdot\F(\vec\ff)(\zeta-|\vc|)+\frac14d_\vs''(|\vc|)(\zeta-|\vc|)^2\\
		&=\E(\vec\ff)
		+\zeta\vs\cdot\F(\vec\ff) +\frac14d_\vs''\paar{|\vc|)(\zeta-|\vc|}^2,\qquad|\zeta-|\vc||<\epsilon_0 
	\end{split}
	\]
	for some $\epsilon_0>0$. Now if $\vec u=(u,v)\in U_\epsilon(\ngg(\vc))\subset\x$, then $u\in U_\epsilon(G_1(\vc))\subset\ho$ and  
	\[
	d_\vs(\vs\cdot c_\vs(u) )\geq\E(\vec\ff)+c_\vs(u)\cdot\F(\vec\ff)
	+
	\frac14d_\vs''(|\vc|)\paar{\vs\cdot c_\vs(u)-|\vc|}^2.
	\]
	On the other hand, we have from the definition that
	\[
	d_\vs(\vs\cdot c_\vs(u))=d_\vs(c_\vs(u))\leq\ns(u;c_\vs(u))\leq\E(\vec u)+c_\vs(u)\cdot\F(\vec u).
	\]
	Combining the  last two inequalities completes the proof.
\end{proof}

\begin{theorem}\label{stability-t-1}
	Let $\vec  c\in B_1(0)$.  If $d_\vs''(|\vc|)>0$,  then $\ngg(\vec c)$  is stable, where $\vs=\vc/|\vc|$.
\end{theorem}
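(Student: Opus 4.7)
The plan is to argue by contradiction, combining the conservation laws for $\E$ and $\F$ along the $\x$-flow of \eqref{system} with the quadratic lower bound from Lemma \ref{lem-tayl} and the concentration-compactness argument already used in Theorem \ref{exits}. Suppose $\ngg(\vc)$ fails to be $\x$-stable. Then one can extract a fixed $\vec\ff\in\ngg(\vc)$, data $\vec u_{0,n}\to\vec\ff$ in $\x$, and first exit times $t_n>0$ for which
\[
\inf_{\vec\psi\in\ngg(\vc),\,y\in\rn}\|\vec u_n(t_n,\cdot)-\vec\psi(\cdot-y)\|_\x=\epsilon_0,
\]
where $\vec u_n$ is the solution launched by $\vec u_{0,n}$ and $\epsilon_0>0$ is taken smaller than the $\epsilon$ furnished by Lemma \ref{lem-tayl}. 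In particular $\vec u_n(t_n)\in U_\epsilon(\ngg(\vc))$, so that $c_\vs(\vec u_n(t_n))$ is well-defined and close to $|\vc|$.

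Conservation of $\E$ and $\F$ together with $\vec u_{0,n}\to\vec\ff$ yields $\E(\vec u_n(t_n))\to\E(\vec\ff)$ and $\F(\vec u_n(t_n))\to\F(\vec\ff)$. Applying Lemma \ref{lem-tayl} at each $t_n$, the left-hand side converges to $0$, so the hypothesis $d_\vs''(|\vc|)>0$ forces $c_\vs(\vec u_n(t_n))\to|\vc|$. Since $c_\vs(\vec u)=d_\vs^{-1}(\tfrac{p-1}{2}K(u))\vs$ and $d_\vs$ is continuous and strictly monotone on $[0,1)$ by Lemma \ref{lemma-diff-dc}, this translates to $K(u_n(t_n))\to K(\ff)$.

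On the other hand, $S_\vc(\vec u_n(t_n))=\E(\vec u_n(t_n))+\vc\cdot\F(\vec u_n(t_n))\to S_\vc(\vec\ff)=d(\vc)$, so the decomposition \eqref{vect-S} together with the sharp variational inequality $J(u)\geq m(\vc)K^{2/(p+1)}(u)$ and the nonnegativity of $\|\L_\vc^{\frac12}u+v\|_\lt^2$ force
\[
J(u_n(t_n))\to J(\ff),\qquad \|v_n(t_n)+\L_\vc^{\frac12}u_n(t_n)\|_\lt\to 0 .
\]
Hence $\{u_n(t_n)\}$ is a minimizing sequence for the constrained problem defining $m(\vc,K(\ff))$. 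Theorem \ref{exits} then yields translations $y_n\in\rn$ and a subsequence with $u_n(t_n,\cdot-y_n)\to\tilde u$ in $\ho$; the Lagrange-multiplier identity $2J(\tilde u)=(p+1)K(\tilde u)=(p+1)K(\ff)$ forces the multiplier to equal $1$ and places $\tilde u\in G(\vc)$. Combined with the $\lt$-convergence of the second component, this produces $\vec u_n(t_n,\cdot-y_n)\to(\tilde u,-\L_\vc^{\frac12}\tilde u)\in\ngg(\vc)$ in $\x$, contradicting the separation defining $t_n$.

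The delicate point will be verifying that $\{u_n(t_n)\}$ really qualifies as a minimizing sequence for which the concentration-compactness step applies: boundedness in $\ho$ follows from coercivity of $J$ under $|\vc|<1$ together with $J(u_n(t_n))\to J(\ff)$, non-vanishing of $\|u_n(t_n)\|_{L^{p+1}}$ follows from $K(u_n(t_n))\to K(\ff)>0$, and the dichotomy alternative is excluded by strict subadditivity of $m(\vc,\cdot)$ exactly as in the proof of Theorem \ref{exits}. A secondary technical point is the global existence of the flow on $[0,t_n]$, which follows from the energy conservation and the coercivity of $\E+\vc\cdot\F$ in a small $\x$-neighborhood of $\ngg(\vc)$.
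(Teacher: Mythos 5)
Your argument is correct and is essentially the proof the paper intends: the paper simply defers to Theorem 5.4 of \cite{leva-1998}, which is exactly this contradiction scheme --- exit times, conservation of $\E$ and $\F$, Lemma \ref{lem-tayl} to force $c_\vs(\vec u_n(t_n))\to|\vc|$ and hence $K(u_n(t_n))\to K(\ff)$, and the compactness of minimizing sequences from Theorem \ref{exits} to produce a limit in $\ngg(\vc)$, contradicting the separation. The only cosmetic caveat is that the nearby ground states $\vec\psi_n$ need not converge to a single fixed $\vec\ff$, but since $\E$, $J$, $K$ and $S_\vc$ are constant on $\ngg(\vc)$ this does not affect any step.
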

\begin{proof}
	The proof is similar to one of Theorem 5.4 in \cite{leva-1998} by using Lemma \ref{lem-tayl}. So we omit the details.
\end{proof}

Recall that $\ff_0\in H^1(\rn)$ is the unique positive radial solution  of \eqref{c=0}.

\begin{theorem}\label{lem3.2}
	Let $\{\vec c_k\}\subset B_1(0)$ be a sequence such that $\vec c_k\to\vec0$ as $k\to\infty$.
	If $\vec\ff_k\in\ngg(\vec c_k)$, then there exists a
	subsequence of $\vec c_k$ (still denoted by the same) and translations $y_k$ such that $\vec\ff_k(\cdot-y_k)\to(\ff_0,0)$ in $\x$.
\end{theorem}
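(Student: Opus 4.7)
The plan is to view $\{\ff_k\}$, after a mild rescaling, as an approximate minimizing sequence for the limit variational problem at $\vec c=\vec 0$, and then invoke the concentration-compactness machinery already used in Theorem \ref{exits}, combined with the classical uniqueness of the positive radial ground state of \eqref{c=0}. Continuity of $d$ (Lemma \ref{lemma-diff-dc}) gives $d(\vec c_k)\to d(\vec 0)$, so the chain in \eqref{d-est},
\[
d(\vec c_k)\geq \frac{p-1}{2(p+1)}(1-|\vec c_k|^2)\|\ff_k\|^2_{H^1(\rn)},
\]
furnishes a uniform $H^1(\rn)$-bound on $\{\ff_k\}$. Because $\|\L^{\frac12}_{\vec c_k}u\|_{L^2}\leq |\vec c_k|\|u\|_{L^2}$, the second component automatically satisfies $-\L^{\frac12}_{\vec c_k}\ff_k\to 0$ in $L^2(\rn)$, so the assertion reduces to the $H^1$-convergence $\ff_k(\cdot-y_k)\to \ff_0$ along a subsequence and suitable translations.

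Next I introduce $s_k>0$ via $\PP(s_k\ff_k;\vec 0)=0$, equivalently $s_k^{p-1}=2J(\ff_k;\vec 0)/((p+1)K(\ff_k))$. Rewriting $\PP(\ff_k;\vec c_k)=0$ as $(p+1)K(\ff_k)=2J(\ff_k;\vec c_k)$ and using the identity $J(u;\vec 0)-J(u;\vec c_k)=\tfrac12\|\L^{\frac12}_{\vec c_k}u\|^2_{L^2}$ gives
\[
s_k^{p-1}=1+\frac{\|\L^{\frac12}_{\vec c_k}\ff_k\|^2_{L^2}}{2J(\ff_k;\vec c_k)}\longrightarrow 1,
\]
since $J(\ff_k;\vec c_k)=\tfrac{p+1}{p-1}d(\vec c_k)\to\tfrac{p+1}{p-1}d(\vec 0)>0$. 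By the homogeneities of $J$ and $K$, $\ns(s_k\ff_k;\vec 0)=\tfrac{(p-1)s_k^2}{p+1}J(\ff_k;\vec 0)\to d(\vec 0)=m_1(\vec 0)$, so $\{s_k\ff_k\}$ is a bona-fide minimizing sequence for $m_1(\vec 0)$ sitting on the Pohozaev manifold $\{\PP(\cdot;\vec 0)=0\}$.

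Applying Theorem \ref{exits} in the form of Remark \ref{remark-vect} at $\vec c=\vec 0$ to $\{s_k\ff_k\}$ produces, along a subsequence, translations $y_k\in\rn$ and a minimizer $\tilde\ff\in G_1(\vec 0)$ with $s_k\ff_k(\cdot-y_k)\to\tilde\ff$ in $H^1(\rn)$. By the classical uniqueness (Berestycki--Lions, Kwong) of the positive radial ground state of \eqref{c=0}, every element of $G_1(\vec 0)$ equals $\ff_0$ up to sign and translation; absorbing the translation into $y_k$ and, if necessary, choosing a further subsequence with consistent sign, we take $\tilde\ff=\ff_0$. Since $s_k\to 1$, this upgrades to $\ff_k(\cdot-y_k)\to\ff_0$ in $H^1(\rn)$. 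Translation-invariance of the Fourier multiplier $\L^{\frac12}_{\vec c_k}$ then yields $\|\L^{\frac12}_{\vec c_k}\ff_k(\cdot-y_k)\|_{L^2}=\|\L^{\frac12}_{\vec c_k}\ff_k\|_{L^2}\leq|\vec c_k|\|\ff_k\|_{L^2}\to 0$, so $\vec\ff_k(\cdot-y_k)\to(\ff_0,0)$ in $\x$.

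The main technical hurdle is the compactness step: one must verify that the concentration-compactness argument underlying Theorem \ref{exits} indeed applies to the perturbed sequence $\{s_k\ff_k\}$. Non-vanishing is immediate from $\|s_k\ff_k\|^{p+1}_{L^{p+1}}=(p+1)K(s_k\ff_k)\to(p+1)K(\ff_0)>0$, while dichotomy is prevented by the strict subadditivity of $m_1(\vec 0)$, already built into the proof of Theorem \ref{exits}. A minor secondary obstacle is coordinating the sign of $\ff_k$ with that of $\ff_0$, which can always be arranged by passing to a further subsequence.
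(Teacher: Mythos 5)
Your proposal is correct and follows essentially the same route as the paper: use continuity of $d$ to recognize $\{\ff_k\}$ as an (approximate) minimizing sequence for the limiting problem at $\vec c=\vec 0$, invoke the concentration-compactness/compactness of minimizing sequences from Theorem \ref{exits} and Remark \ref{remark-vect}, and conclude via uniqueness of the ground state of \eqref{c=0}. Your version is simply more explicit than the paper's (the normalization $s_k\to1$ onto the Nehari manifold at $\vec c=\vec 0$, the decay of the second component, and the sign/translation bookkeeping are all left implicit there).
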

\begin{proof}
	We have seen that if  $\{\vec\ff_k\}\subset \x$ satisfies $\frac{p-1}{p+1}J(\vec\ff_k)=\frac{p-1}{2}K(\vec\ff_k)\to d(\vec c)$, there exists some $\vec\ff\in\ngg(\vec c)$ and a sequence $\{y_k\}\subset\rn$ such that $\vec\ff_j(\cdot+y_k)$ converges, up to a subsequence, to $\vec\ff$ strongly in $\x$. In particular, this fact holds when $\vec c=\vec 0$. Now the continuity of $d$ shows that $m(\vec c)\to m(\vec 0)$. The proof of the theorem is thus concluded.
\end{proof}

\begin{lemma}\label{lem2.1}
	Let $\vec\ff\in\ngg(\vec c)$ with $\vc\in B_1(0)$. Assume that there exists $\delta>0$ such that $\la \nl \vec u,\vec u\ra\geq\delta\|\vec u\|_{\x}^2$ for any $\vec u\in \x$ satisfying   $\la\vec\ff,\vec u\ra=\la\vec u,\vec\ff_{x_j}\ra=0$, $1\leq j\leq n$. Then there  exist   $C>0$  and $\varepsilon>0$ such that
	
	\begin{equation}\label{e-est}
		\E(\vec u)-\E(\vec\ff)\geq C\inf_{y\in\rn}\|\vec u-\vec\ff(\cdot-y)\|_\x^2
	\end{equation}
	for any $\vec u\in U_\varepsilon(\vec \ff)$ satisfying $\vc\cdot\F(\vec\ff)=\vc\cdot\F(\vec u)$, where $U_\varepsilon(\vec\ff)$ is the  open $\varepsilon$-neighborhood of $\vec\ff$.
\end{lemma}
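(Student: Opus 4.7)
The proof follows the standard coercivity-at-a-ground-state scheme in the spirit of Weinstein and Grillakis--Shatah--Strauss.

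\emph{Modulation and Taylor expansion.} For $\vec u\in U_\varepsilon(\vec\ff)$ with $\varepsilon$ sufficiently small, the implicit function theorem provides $y(\vec u)\in\rn$ realizing $\inf_{y\in\rn}\|\vec u-\vec\ff(\cdot-y)\|_\x$, and by minimality $\la\vec u(\cdot+y(\vec u))-\vec\ff,\vec\ff_{x_j}\ra_\x=0$ for $j=1,\dots,n$. Since $\E$, $\vec c\cdot\F$ and $\|\cdot\|_\x$ are translation invariant, I may replace $\vec u$ by its optimal translate and treat $\vec w:=\vec u-\vec\ff$ as satisfying those $n$ orthogonalities, with $\|\vec w\|_\x=\inf_y\|\vec u-\vec\ff(\cdot-y)\|_\x$. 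Because $\vec\ff$ is a critical point of $S_{\vec c}=\E+\vec c\cdot\F$ by \eqref{criticalpoint} and $\F$ is a bounded quadratic form on $\x$, a second-order Taylor expansion yields
\[
S_{\vec c}(\vec u)-S_{\vec c}(\vec\ff)=\tfrac12\la\nl\vec w,\vec w\ra+r(\vec w),\qquad r(\vec w)=o(\|\vec w\|_\x^2),
\]
and the constraint $\vec c\cdot\F(\vec u)=\vec c\cdot\F(\vec\ff)$ reduces this to $\E(\vec u)-\E(\vec\ff)=\tfrac12\la\nl\vec w,\vec w\ra+r(\vec w)$. Thus it suffices to prove a quadratic-form lower bound $\la\nl\vec w,\vec w\ra\geq 2C\|\vec w\|_\x^2$ and absorb $r$ for $\varepsilon$ small.

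\emph{Elimination of the $\vec\ff$-direction via the momentum constraint.} Decompose $\vec w=a\vec\ff+\vec z$ with $a:=\la\vec w,\vec\ff\ra_\x/\|\vec\ff\|_\x^2$ and $\vec z$ orthogonal to $\vec\ff$ in $\x$. Since $\la\vec\ff,\vec\ff_{x_j}\ra_\x=\tfrac12\int\partial_{x_j}(|\nabla\ff|^2+\ff^2+\psi^2)\dd x=0$, the piece $\vec z$ is also orthogonal to each $\vec\ff_{x_j}$, so the hypothesis gives $\la\nl\vec z,\vec z\ra\geq\delta\|\vec z\|_\x^2$. The quadraticity of $\F$ expands the constraint as
\[
0=\vec c\cdot\F(\vec u)-\vec c\cdot\F(\vec\ff)=2a\,\vec c\cdot\F(\vec\ff)+2\vec c\cdot B(\vec\ff,\vec z)+\vec c\cdot\F(\vec w),
\]
where $B$ is the polarization of $\F$. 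By Lemma \ref{lemma-diff-dc} one has $\vec c\cdot\F(\vec\ff)=-\vec c\cdot\nabla_{\vec c}d(\vec c)\neq 0$ for $\vec c\neq 0$, so the coefficient of $a$ is nonzero and $|a|\leq C_1(\|\vec z\|_\x+\|\vec w\|_\x^2)$. Combined with the orthogonal splitting $\|\vec w\|_\x^2=a^2\|\vec\ff\|_\x^2+\|\vec z\|_\x^2$, this yields, for $\varepsilon$ small enough, $|a|\leq C_2\|\vec z\|_\x$ and $\|\vec w\|_\x^2\leq C_3\|\vec z\|_\x^2$.

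\emph{Coercivity --- the main difficulty.} Expanding
\[
\la\nl\vec w,\vec w\ra=a^2\la\nl\vec\ff,\vec\ff\ra+2a\la\nl\vec\ff,\vec z\ra+\la\nl\vec z,\vec z\ra,
\]
the subtlety is that $\la\nl\vec\ff,\vec\ff\ra=\la P'(\vec\ff),\vec\ff\ra=(1-p^2)K(\ff)<0$ and the cross term is sign-indefinite, so naive absolute-value bounds are insufficient. The classical remedy I plan to adopt is a spectral argument: the hypothesis together with $\la\nl\vec\ff,\vec\ff\ra<0$ forces $\nl$ restricted to $\{\vec\ff_{x_j}\}^\perp$ to have Morse index exactly one, and its unique negative direction is transverse to the tangent hyperplane $\{\vec c\cdot\F'(\vec\ff)\}^\perp$ of the momentum constraint precisely when $\vec c\cdot\F(\vec\ff)\neq 0$. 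An eigenfunction expansion then transfers coercivity from $\{\vec\ff,\vec\ff_{x_j}\}^\perp$ to $\{\vec c\cdot\F'(\vec\ff),\vec\ff_{x_j}\}^\perp$, producing $\la\nl\vec w,\vec w\ra\geq C_0\|\vec w\|_\x^2$; combining with the Taylor remainder step yields \eqref{e-est}.
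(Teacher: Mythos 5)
Your modulation and Taylor-expansion steps coincide with the paper's, but your final ``coercivity'' paragraph is where the lemma is actually proved, and there it is a genuine gap: you do not close the argument, you only announce a spectral strategy. The specific claim you lean on --- that the unique negative direction of $\nl$ on $\{\vec\ff_{x_j}\}^\perp$ is transverse to $\{\vc\cdot\F'(\vec\ff)\}^\perp$ ``precisely when $\vc\cdot\F(\vec\ff)\neq 0$'' --- is not justified and is not correct as stated. Transferring positivity of $\nl$ from $\{\vec\ff,\vec\ff_{x_j}\}^\perp$ to $\{\vc\cdot\F'(\vec\ff),\vec\ff_{x_j}\}^\perp$ is the standard Grillakis--Shatah--Strauss index computation, and it requires a sign condition of the type $\la\nl^{-1}(\vc\cdot\F'(\vec\ff)),\vc\cdot\F'(\vec\ff)\ra<0$ (equivalently, convexity of $d$), which is not among the hypotheses of this lemma and is not implied by $\vc\cdot\F(\vec\ff)\neq0$ alone. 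So the eigenfunction-expansion step you defer to would need an additional assumption you have not secured.

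The reason you are forced into this harder route is your decomposition. Writing $\vec w=a\vec\ff+\vec z$ with $\vec z\perp_\x\vec\ff$ leaves the bilinear term $\vc\cdot B(\vec\ff,\vec z)$ alive in the expansion of the momentum constraint, so you only obtain the first-order bound $|a|\lesssim\|\vec z\|_\x+\|\vec w\|_\x^2$, and then $a^2\la\nl\vec\ff,\vec\ff\ra<0$ and the cross term are genuinely order $\|\vec w\|_\x^2$ and cannot be absorbed. The paper instead arranges the decomposition $\vec w=a\vec\ff+\sum_j\ell_j\vec\ff_{x_j}+\vec v$ so that the first-order variation of the momentum along $\vec w$ picks out only the $\vec\ff$-component, $\la\vc\cdot\F'(\vec\ff),\vec w\ra=a\norm{\ltt\ff}_\lt^2$, while the constraint forces $\la\vc\cdot\F'(\vec\ff),\vec w\ra=O(\|\vec w\|_\x^2)$; hence $a=O(\|\vec w\|_\x^2)$ and the negative direction contributes only at cubic order. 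Together with $\nl\vec\ff_{x_j}=0$ this gives $\la\nl\vec w,\vec w\ra=\la\nl\vec v,\vec v\ra+O(\|\vec w\|_\x^3)$ and $\|\vec v\|_\x^2\geq\tfrac{1}{n+2}\|\vec w\|_\x^2+O(\|\vec w\|_\x^3)$, so the hypothesis applied to $\vec v$ yields \eqref{e-est} directly, with no extra spectral input. To repair your proof, either adapt the splitting so that the momentum constraint kills the $\vec\ff$-coefficient to quadratic order, or add and verify the $d''$-type sign condition your spectral transfer needs.
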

\begin{proof}
	Let $\vec u\in U_\varepsilon(\vec\ff)$ such that $\vc\cdot\F(\vec\ff)=\vc\cdot\F(\vec u)$. By the implicit function theorem, there is $y=y_{\vec u}\in\rn$ such that
	\[
	\|\vec u-\vec\ff(\cdot+y)\|_\x^2=\min_{z\in\rn}\|\vec u-\vec\ff(\cdot+z)\|_\x^2,
	\]
	provided $\varepsilon>0$ is small enough. The Taylor expansion implies that
	\[
	\svc(\vec u)=\svc(\vec\ff)+\la \svc'(\vec\ff),\vec w\ra+\frac12\la\nl(\vec\ff)\vec w,\vec w\ra+o(\|\vec w\|_\x^2),
	\] 
	where $\vec w=\vec u(\cdot-y)-\vec \ff$ and 	the linearized operator $\nl= \svc''(\ff)$ is defined by 
	\begin{equation}\label{second-der}
		\nl=  \left(\begin{array}{cc}
			\varUpsilon_\vc&- \L_\vc^{\frac12}\\
			\L_\vc^{\frac12}&I 
		\end{array}\right),
	\end{equation}
	where $\varUpsilon_\vc=	I-\de-p\ff^{p-1}$. It is obvious that $\varUpsilon_\vc$ is self-adjoint from $\ho$ to $H^{-1}(\rn)$. Differentiating equation $S'_\vc(\vec\ff)=0$  with respect to $x_j$, gives $\nl(\ff_{x_j})=0$ for $1\leq j\leq n$.
	Since $\vec\ff$ is the critical point of $\svc$ and $\vc\cdot\F(\vec u)=\vc\cdot\F(\vec\ff)$, we have 
	\[
	\E(\vec u)-\E(\vec\ff)=\frac12\la\nl(\vec\ff)\vec w,\vec w \ra+o(\|\vec w\|_\x^2).
	\]
	By decomposing $\vec w$ as $\vec w=a\vec\ff+\sum_{j=1}^n \ell_j\vec\ff_{x_j}+\vec v$ such that $a,\ell_j\in\rr$ and $\vec v\in\x$ satisfies  $\la\vec\ff,\vec v\ra=\la\vec v,\vec\ff_{x_j}\ra=0$, $1\leq j\leq n$, we obtain from the Taylor expansion that
	\[
	\vc\cdot\F(\vec\ff)=\vc\cdot\F(\vec\ff)+\la\vc\cdot\F'(\vec\ff),\vec w\ra+O(\|\vec w\|_\x^2)
	\]
	and
	\[
	\la\vc\cdot\F'(\vec\ff),\vec w\ra=a\|\ltt\ff\|_\lt^2\cong a\|\ff\|_\lt^2;
	\]
	so that $a=O(\|\vec w\|_\x^2)$. Moreover, we have $0=\la\vec w,\vec\ff_{x_j}\ra$ which implies that $|\ell_j|\|\vec\ff_{x_j}\|_\x\leq\|\vec v\|_\x$ and
	\[
	\|\vec w\|_\x\leq (n+2)\|\vec v\|_\x+O(\|\vec w\|_\x^2).
	\]
	Hence, we deduce that 
	\begin{equation}\label{est-3}
		\|\vec v\|_x^2\geq\frac{1}{n+2}\|\vec w\|_\x^2+O(\|\vec w\|_\x^3).
	\end{equation}
	Now, the fact $\nl(\vec \ff_{x_j})=0$ with $1\leq j\leq n$, implies that
	\begin{equation}\label{est-4}
		\la\nl(\vec v),\vec v\ra=\la\nl(\vec w),\vec w\ra+O(\|\vec w\|_\x^3).
	\end{equation}
	The assumption of lemma reveals that there exists $\delta>0$ such that  $\la \nl \vec v,\vec v\ra\geq\delta\|\vec v\|_{\x}^2$. This together with \eqref{est-3} and \eqref{est-4} implies that
	\[
	\E(\vec u)-\E(\vec\ff)\geq\frac{\delta}{2(n+2)}\|\vec w\|_x^2+o(\|\vec w\|_x^2).
	\]
	Since $\vec\ff\in U_\varepsilon(\vec\ff)$ and $\|\vec w\|_\x<\varepsilon$, then there is $\varepsilon>0$, depending on $\delta$, such that
	\[
	\E(\vec u)-\E(\vec\ff)\geq\frac{\delta}{2(n+2)}\|\vec u-\vec\ff(\cdot+y)\|_x^2+o(\|\vec w\|_x^2).
	\] And the proof is complete.
\end{proof}

The following lemma was proved in Lemma 2.4 in \cite{maris}.
\begin{lemma}\label{lemma3.3}
	There  exists $\delta>0$ such that 
	$\la \varUpsilon_0  u,  u\ra\geq \delta\|  u\|_{H^1(\rn)}^2$ for any $u\in H^1(\rn)$ satisfying   
	$\la \ff_0,  u\ra=\la  u, \partial_{x_j}\ff_0 \ra=0$, $1\leq j\leq n$, where $\ff_0$ is the unique positive ground state of \eqref{c=0}.
\end{lemma}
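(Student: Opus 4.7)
The plan is to exploit the classical spectral structure of the Schr\"odinger-type operator $\varUpsilon_0 = I - \Delta - p\ff_0^{p-1}$ linearized at the unique positive radial ground state $\ff_0$ of \eqref{c=0}. I would first record the following facts: (i) $\varUpsilon_0$ is self-adjoint on $L^2(\rn)$ with essential spectrum $[1,\infty)$, since the potential $p\ff_0^{p-1}$ decays exponentially and thus gives a relatively compact perturbation of $I-\Delta$; (ii) translation invariance of \eqref{c=0} yields $\varUpsilon_0(\partial_{x_j}\ff_0)=0$ for $1\le j\le n$, and by the nondegeneracy of $\ff_0$ (Kwong's uniqueness theorem) one has $\ker \varUpsilon_0=\mathrm{span}\{\partial_{x_1}\ff_0,\ldots,\partial_{x_n}\ff_0\}$; (iii) $\varUpsilon_0$ has exactly one strictly negative, simple eigenvalue $-\mu^2<0$ with positive radial eigenfunction $\chi$.

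The next ingredient is the algebraic identity, derived from $\ff_0-\Delta\ff_0=\ff_0^p$,
\[
\la\varUpsilon_0\ff_0,\ff_0\ra = \|\ff_0\|_{H^1(\rn)}^2 - p\|\ff_0\|_{L^{p+1}(\rn)}^{p+1} = (1-p)\|\ff_0\|_{L^{p+1}(\rn)}^{p+1}<0,
\]
which forces $\la\ff_0,\chi\ra\neq0$. This is why the single orthogonality constraint $\la u,\ff_0\ra=0$ suffices to neutralize the unique negative direction.

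I would then establish $L^2$-coercivity on the constrained space
\[
\nv=\{u\in H^1(\rn):\la u,\ff_0\ra=\la u,\partial_{x_j}\ff_0\ra=0,\,1\le j\le n\}.
\]
Writing the spectral decomposition $u=\beta\chi+u^+$ for $u\in\nv$, where $u^+$ lies in the positive spectral subspace (the kernel component is annihilated by the second batch of constraints), the constraint $\la u,\ff_0\ra=0$ together with $\la\ff_0,\chi\ra\neq0$ expresses $\beta$ linearly in $u^+$, giving $|\beta|\le C\|u^+\|_{L^2(\rn)}$. Substituting into
\[
\la\varUpsilon_0 u,u\ra = -\mu^2\beta^2\|\chi\|_{L^2(\rn)}^2+\la\varUpsilon_0 u^+,u^+\ra
\]
and using the spectral gap $\la\varUpsilon_0 u^+,u^+\ra\ge\kappa\|u^+\|_{L^2(\rn)}^2$ for some $\kappa>0$, one obtains $\la\varUpsilon_0 u,u\ra\ge \delta_0\|u\|_{L^2(\rn)}^2$ for a uniform $\delta_0>0$.

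Finally, I would upgrade $L^2$-coercivity to $H^1$-coercivity by a standard compactness argument: if no such estimate held, a sequence $u_k\in\nv$ with $\|u_k\|_{H^1(\rn)}=1$ and $\la\varUpsilon_0 u_k,u_k\ra\to0$ would satisfy $p\int_\rn\ff_0^{p-1}|u_k|^2\dd x\to 1$; extracting a weak $H^1$-limit $u^\ast\in\nv$ and using compactness of the embedding weighted by the exponentially decaying $\ff_0^{p-1}$, one would get $p\int_\rn\ff_0^{p-1}|u^\ast|^2\dd x=1$, hence $\la\varUpsilon_0 u^\ast,u^\ast\ra\le0$, which combined with the $L^2$-coercivity forces $u^\ast=0$, a contradiction. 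The main obstacle in this plan is the spectral input (iii)---the simplicity of the negative eigenvalue and the triviality of the kernel beyond translations---which I would secure either by reducing to radial functions via Steiner symmetrization of the negative eigenfunction together with Sturm oscillation on the corresponding ODE, or by directly invoking \cite{maris}.
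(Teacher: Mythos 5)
The paper gives no proof of this lemma at all --- it is simply quoted from \cite[Lemma 2.4]{maris} --- so the only question is whether your self-contained argument is sound, and it is not: there is a genuine gap at the step where you pass from the constraint $\la u,\ff_0\ra=0$ to coercivity. The bound $|\beta|\le C\|u^+\|_\lt$ does \emph{not} yield $\la\varUpsilon_0u,u\ra\ge\delta_0\|u\|_\lt^2$; substituting it gives only $\la\varUpsilon_0u,u\ra\ge\bigl(\kappa-\mu^2C^2\|\chi\|_\lt^2\bigr)\|u^+\|_\lt^2$, and nothing forces the bracket to be positive. The correct criterion for nonnegativity of $\varUpsilon_0$ on $\{\ff_0\}^\perp\cap(\ker\varUpsilon_0)^\perp$ is the sign of $\la\varUpsilon_0^{-1}\ff_0,\ff_0\ra$, and this quantity is computable: the family $\ff^{(\omega)}(x)=\omega^{1/(p-1)}\ff_0(\sqrt{\omega}\,x)$ solves $\omega\ff-\Delta\ff=\ff^p$, and differentiating at $\omega=1$ gives $\varUpsilon_0\bigl(\partial_\omega\ff^{(\omega)}|_{\omega=1}\bigr)=-\ff_0$, hence
\[
\la\varUpsilon_0^{-1}\ff_0,\ff_0\ra=-\tfrac12\,\tfrac{\d}{\d\omega}\|\ff^{(\omega)}\|_\lt^2\big|_{\omega=1}=-\tfrac12\Bigl(\tfrac{2}{p-1}-\tfrac n2\Bigr)\|\ff_0\|_\lt^2 .
\]
This is negative only for $p<1+\frac4n$. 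For $1+\frac4n<p<2^\ast-1$ (a nonempty range for every $n\ge2$) it is positive, and then $\varUpsilon_0$ restricted to your constrained space $\nv$ still has a negative direction (the witness lives in ${\rm span}\{\chi\}\oplus(\mbox{positive subspace})$, which is orthogonal to $\ker\varUpsilon_0$, so the extra constraints $\la u,\partial_{x_j}\ff_0\ra=0$ do not help). In that range the inequality you are proving is simply false with the $L^2$ pairing, so no rearrangement of your spectral decomposition can close the gap.

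Two further remarks. Your sentence ``this is why the single orthogonality constraint suffices to neutralize the unique negative direction'' is exactly the unproved point: $\la\ff_0,\chi\ra\ne0$ (which is anyway obvious since both functions are positive) only guarantees that the restricted operator has \emph{at most} one fewer negative eigenvalue; whether it actually loses the negative eigenvalue is governed by the sign computed above. The statement does become true for every $p>1$ if the orthogonality is taken in $H^1$ rather than $L^2$: since $(I-\Delta)\ff_0=\ff_0^p$, the condition $\la u,\ff_0\ra_{H^1(\rn)}=0$ is the same as $\la u,\ff_0^p\ra_\lt=0$, and because $\varUpsilon_0\ff_0=(1-p)\ff_0^p$ one gets $\la\varUpsilon_0^{-1}\ff_0^p,\ff_0^p\ra=\frac{1}{1-p}\|\ff_0\|_{L^{p+1}(\rn)}^{p+1}<0$ for all $p>1$. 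So either restrict the lemma to $p<1+\frac4n$ or switch to this orthogonality; with that input, your decomposition plus the standard index argument works. Your final $L^2$-to-$H^1$ upgrade is fine and can even be shortened: if $\|u_k\|_{H^1(\rn)}=1$ and $\la\varUpsilon_0u_k,u_k\ra\to0$, the $L^2$-coercivity forces $\|u_k\|_\lt\to0$, hence $p\int_\rn\ff_0^{p-1}u_k^2\dd x\le p\|\ff_0^{p-1}\|_{L^\infty(\rn)}\|u_k\|_\lt^2\to0$, contradicting $p\int_\rn\ff_0^{p-1}u_k^2\dd x\to1$ without any weak limit.
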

\begin{lemma}\label{prop2.1}
	Let $\vec\ff\in\ngg(\vec c)$ with $\vc\in B_1(0)\subset\rn$. Assume that there exists $\delta>0$ such that $\la \nl \vec u,\vec u\ra\geq\delta\|\vec u\|_{\x}^2$ for any $\vec u\in \x$ satisfying   $\la\vec\ff,\vec u\ra=\la\vec u,\vec\ff_{x_j}\ra=0$, $1\leq j\leq n$. Then $\vec\ff$ is stable in $\x$.
\end{lemma}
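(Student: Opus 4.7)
The plan is to derive $\x$-stability of $\vec\ff$ from the coercive lower bound of Lemma \ref{lem2.1} via a standard orbital stability contradiction argument based on conservation of energy and momentum along the Hamiltonian flow of \eqref{system}. I would first assume, for contradiction, that $\vec\ff$ is unstable. Then, fixing $\varepsilon_0\in(0,\varepsilon)$ with $\varepsilon$ the neighborhood size supplied by Lemma \ref{lem2.1}, one obtains a sequence of initial data $\vec u_{0,k}\to\vec\ff$ in $\x$ and first exit times $t_k>0$ such that the corresponding solutions $\vec u_k(t)$ of \eqref{system}, globally defined on $[0,t_k]$ for $k$ large by the local theory of Theorem \ref{local}, satisfy
\[
\inf_{y\in\rn}\|\vec u_k(t_k)-\vec\ff(\cdot-y)\|_\x=\varepsilon_0,
\]
while $\vec u_k(t)\in U_\varepsilon(\vec\ff)$ on $[0,t_k]$.

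Next I would exploit the conservation of $\E$ and $\F$ under \eqref{system}: this yields $\E(\vec u_k(t_k))=\E(\vec u_{0,k})\to\E(\vec\ff)$ and $\vc\cdot\F(\vec u_k(t_k))=\vc\cdot\F(\vec u_{0,k})\to\vc\cdot\F(\vec\ff)$. Since Lemma \ref{lem2.1} requires the exact constraint $\vc\cdot\F(\vec u)=\vc\cdot\F(\vec\ff)$, the momentum mismatch $\vc\cdot\F(\vec u_k(t_k))-\vc\cdot\F(\vec\ff)$, although $o(1)$, must be absorbed via a modulation. I would apply the implicit function theorem to $\vc\cdot\F:\x\to\rr$ near $\vec\ff$: provided the gradient $\vc\cdot\F'(\vec\ff)$ is nonzero in $\x^*$ (a direct computation using $\vec\ff=(\ff,-\ltt\ff)$ and the non-degeneracy of the ground state $\ff$), one can project $\vec u_k(t_k)$ onto the level set $\{\vc\cdot\F(\cdot)=\vc\cdot\F(\vec\ff)\}$, producing $\vec w_k\in\x$ with $\|\vec w_k-\vec u_k(t_k)\|_\x=O(|\vc\cdot\F(\vec u_k(t_k))-\vc\cdot\F(\vec\ff)|)\to0$. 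The case $\vc=\vec 0$ is trivial since the constraint is automatic.

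With $\vec w_k$ satisfying the constraint and lying in $U_\varepsilon(\vec\ff)$ for $k$ large, Lemma \ref{lem2.1} gives
\[
\E(\vec w_k)-\E(\vec\ff)\geq C\inf_{y\in\rn}\|\vec w_k-\vec\ff(\cdot-y)\|_\x^2.
\]
The left-hand side tends to zero by continuity of $\E$ on $\x$ combined with the previous convergences, whereas the right-hand side converges to $C\varepsilon_0^2>0$ since $\|\vec w_k-\vec u_k(t_k)\|_\x\to0$, a contradiction that completes the argument.

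The main obstacle, beyond unpacking the standard contradiction scheme, will be the reduction to the constrained setting of Lemma \ref{lem2.1} via modulation. The conservation law delivers $\F$ asymptotically but not exactly, and the one-parameter correction $\vec u_k(t_k)\mapsto\vec w_k$ hinges on the non-vanishing of $\vc\cdot\F'(\vec\ff)$ in $\x^*$, which in turn relies on the explicit structure of ground state traveling waves.
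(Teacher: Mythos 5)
Your argument is correct and follows the same overall scheme as the paper, which simply invokes Lemma \ref{lem2.1} together with the proof of Theorem 3.5 in \cite{gss}: contradiction via a sequence of escaping solutions, conservation of $\E$ and $\vc\cdot\F$, and the coercivity estimate. The one place where you genuinely depart from the GSS template is the restoration of the exact momentum constraint: GSS (and hence, implicitly, the paper) absorb the mismatch $\vc\cdot\F(\vec u_0)\neq\vc\cdot\F(\vec\ff)$ by comparing with a modulated ground state $\vec\ff_{\vc'}$ whose momentum matches that of the data, which requires a $C^1$ family $\vc\mapsto\vec\ff_\vc$ and a nondegeneracy condition of the type $d''\neq 0$ --- neither of which is among the hypotheses of this lemma. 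Your alternative, projecting $\vec u_k(t_k)$ onto the level set $\{\vc\cdot\F=\vc\cdot\F(\vec\ff)\}$ via the implicit function theorem (using $\vc\cdot\F'(\vec\ff)\neq 0$ for $\vc\neq\vec 0$, which holds since $\ltt\ff\neq 0$, with the case $\vc=\vec 0$ vacuous), costs only an $o(1)$ perturbation in both $\E$ and the distance to the orbit and therefore closes the contradiction with exactly the stated hypotheses. This makes your version more self-contained than the citation; the only cosmetic caveat is that Lemma \ref{lem2.1} is stated on a neighborhood of $\vec\ff$ while its proof (and your use of $\inf_y$) really lives on a tubular neighborhood of the orbit, a standard identification you should make explicit.
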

\begin{proof}
	The proof is followed by Lemma \ref{lem2.1} and the proof of Theorem 3.5 in \cite{gss}.
\end{proof}
Drawing inspiration from the concepts outlined in \cite{estst}, we demonstrate the following lemma.
\begin{lemma}\label{lem3.1}
	Let $\vec\ff\in\ngg(\vec c)$ with $\vc\in B_1(0)\subset\rn$. Then there is $c_0\in(0,1)$ such that if $\vc\in B_{c_0}(0)$, then there exists $\delta>0$ such that $\la\nl \vec u,\vec u\ra\geq \delta\|\vec u\|_\x^2$ for any $\vec u\in\x$ satisfying   $\la\vec\ff,\vec u\ra=\la\vec u,\vec\ff_{x_j}\ra=0$, $1\leq j\leq n$.
\end{lemma}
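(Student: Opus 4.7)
The plan is a perturbation argument from the limit case $\vc = 0$, using Theorem \ref{lem3.2} (strong $\x$-convergence of ground states) together with Lemma \ref{lemma3.3} (coercivity of $\Upsilon_0$).

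I first treat the endpoint. At $\vc = 0$, $\vec\ff_0 = (\ff_0, 0)$ and $\vec\ff_{0, x_j} = (\partial_{x_j}\ff_0, 0)$, so the orthogonality conditions $\la \vec\ff_0, \vec u\ra = \la \vec u, \vec\ff_{0, x_j}\ra = 0$ involve only the first component of $\vec u = (u, v)$. Since $\nlz = \mathrm{diag}(\Upsilon_0, I)$, Lemma \ref{lemma3.3} combined with the trivial lower bound on the $v$-block gives
\[
\la \nlz \vec u, \vec u \ra = \la \Upsilon_0 u, u\ra + \|v\|_\lt^2 \geq \min(\delta_0, 1)\,\|\vec u\|_\x^2
\]
on the resulting codimension-$(n+1)$ subspace, with some $\delta_0 > 0$. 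This is the coercivity I then aim to perturb.

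To transfer this to small $\vc \neq 0$, I argue by contradiction: suppose there exist sequences $\vc_k \to 0$, $\vec\ff_k \in \ngg(\vc_k)$ and $\vec u_k \in \x$ with $\|\vec u_k\|_\x = 1$, the orthogonality conditions at $\vec\ff_k$, and $\la \mathscr{L}_{\vc_k} \vec u_k, \vec u_k\ra \to 0$. Decompose $\mathscr{L}_{\vc_k} = \nlz + \mathscr{R}_{\vc_k}$, the remainder carrying multiplication by $p(\ff_0^{p-1} - \ff_k^{p-1})$ on the diagonal and the entries $\mp \L_{\vc_k}^{\frac12}$ off-diagonal. Since $\|\L_\vc^{\frac12}\|_{L^2 \to L^2} \leq |\vc|$, and Theorem \ref{lem3.2} combined with the $W^{2,q}$-regularity of Theorem \ref{decay-regularity} and Sobolev embedding implies that the multiplication operator tends to zero in the $H^1 \to H^{-1}$ norm, one has $|\la \mathscr{R}_{\vc_k} \vec u_k, \vec u_k\ra| = o(1)$. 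To reconcile the $\vc_k$-dependent orthogonality with that at $\vc = 0$, I replace $\vec u_k$ by its projection $\tilde{\vec u}_k$ onto $\{\vec\ff_0, \vec\ff_{0, x_j}\}^\perp$; the strong convergences $\vec\ff_k \to \vec\ff_0$ and $\vec\ff_{k, x_j} \to \vec\ff_{0, x_j}$ in $\x$ then force the correction coefficients to vanish, so $\|\tilde{\vec u}_k - \vec u_k\|_\x \to 0$. Applying the $\vc = 0$ coercivity to $\tilde{\vec u}_k$ yields $\la \mathscr{L}_{\vc_k} \vec u_k, \vec u_k\ra \geq \tfrac12 \min(\delta_0, 1) - o(1)$, contradicting $\la \mathscr{L}_{\vc_k} \vec u_k, \vec u_k\ra \to 0$.

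The main obstacle is promoting the $H^1$-convergence $\ff_k \to \ff_0$ delivered by Theorem \ref{lem3.2} to the $H^2$-convergence needed for $\vec\ff_{k, x_j} \to \vec\ff_{0, x_j}$ in $\x$. I would handle this via the convolution representation $\ff_k = \K_{\vc_k} \ast \ff_k^p$ and the uniform $W^{2,q}$-bounds of Theorem \ref{decay-regularity}, which yield $H^2$-compactness of $\{\ff_k\}$ and thus the required convergence of the first derivatives. A secondary subtlety is the non-uniqueness of the ground state $\ff_\vc$: the entire argument must be valid for any sequence $\vec\ff_k \in \ngg(\vc_k)$, which is precisely what Theorem \ref{lem3.2} delivers up to subsequences and translations.
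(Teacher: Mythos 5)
Your proposal is correct and rests on the same three pillars as the paper's own proof: a contradiction argument with sequences $\vc_k\to 0$, the strong $\x$-convergence of ground states from Theorem \ref{lem3.2}, and the coercivity of $\varUpsilon_0$ from Lemma \ref{lemma3.3}. Where you diverge is in how the contradiction is extracted. The paper passes to a \emph{weak} limit $\vec u$ of the normalized test sequence, uses the compactness of the potential term (via $|\vec u_j(\cdot+y_j)|^2\rightharpoonup|\vec u|^2$ in $L^{\frac{p+1}{2}}$ against the strongly convergent $\ff_j^{p-1}$) together with weak lower semicontinuity to place $\vec u$ in the setting of Lemma \ref{lemma3.3}, concludes $u=0$, and then contradicts the normalization through the identity $0\geq 1-p\int\ff_0^{p-1}|u|^2$. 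You instead show that the quadratic forms themselves converge, $\la(\nl-\nlz)\vec u_k,\vec u_k\ra=o(1)$, by combining $\|\L_{\vc_k}^{1/2}\|_{\lt\to\lt}\leq|\vc_k|$ with the norm-smallness of the multiplication operator $p(\ff_k^{p-1}-\ff_0^{p-1})$, and then apply the $\vc=0$ coercivity directly to a projected $\tilde{\vec u}_k$. Both are valid; your route is more quantitative (it avoids extracting a weak limit altogether and makes the perturbative mechanism explicit), while the paper's route needs less information about the rate at which $\mathscr{R}_{\vc_k}$ vanishes. Two remarks. First, for the norm-convergence of the multiplication operator you do not need the $W^{2,q}$ machinery: H\"older gives $|\int(\ff_k^{p-1}-\ff_0^{p-1})u^2|\leq\|\ff_k^{p-1}-\ff_0^{p-1}\|_{L^{(p+1)/(p-1)}}\|u\|_{L^{p+1}}^2$, and the $H^1$-convergence from Theorem \ref{lem3.2} already controls the first factor. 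Second, the ``main obstacle'' you identify (upgrading to $H^2$ so that $\vec\ff_{k,x_j}\to\vec\ff_{0,x_j}$) only arises if the orthogonality pairings $\la\vec u,\vec\ff_{x_j}\ra$ are taken in $\x$; the paper's own limit passage treats them as $\lt\times\lt$ pairings, for which $H^1$-convergence of $\ff_k$ suffices and your projection coefficients vanish without further work. Your proposed fix via the convolution representation $\ff_k=\K_{\vc_k}\ast\ff_k^p$ is nevertheless sound, so this is extra effort rather than a gap.
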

\begin{proof}
 We prove by contradiction.	Suppose that  Lemma \ref{lem3.1} is false. Then there exists $\{\vc_j\}\subset\rn$ and $\{\vec u_j=(u_{j},v_j)\}\subset\x$ such that $|\vc_j|<1$, $\vc_j\to0$, $\la\vec u_j,\vec\ff_j\ra=\la\vec u_j,\partial_{x_j}\vec\ff_j\ra=0$, $\|\vec u_j\|_\x=1$ and
	\begin{equation}\label{est-55}
 		\lim_{j\to\infty} \scal{\mathscr{L}_{\vc_j}\vec u_j,\vec u_j} \leq0,	 
	\end{equation} 
	where $\vec\ff_j=(\ff_j,-\ltt\ff_j)\in\ngg(\vc_j)$. Theorem \ref{lem3.2} then implies that there exists a subsequence of $\{\vc_j\}$, still denoted by the same letter, and $\{y_j\}\in\rn$ such that $\vec\ff_j\to\vec\ff_0=(\ff_0,0)$ strongly in $\x$. Hence,  there is a subsequence of $\x$-bounded sequence of $\{\vec u_j(\cdot+y_j)\}$, still denoted by the same letter, and $\vec u=(u,v)\in\x$ such that $\vec u_j(\cdot+y_j)\rightharpoonup\vec u$ in $\x$  and $|\vec u_j(\cdot+y_j)|^2\rightharpoonup|\vec u|^2$ in $L^{\frac{p+1}{2}}(\rn)$. Hence, we obtain that
	\[
	\lim_{j\to\infty}\int_\rn\ff_j^{p-1}(x+y_j)|u_j(x+y_j)|^2\dd x=\int_\rn\ff_0^{p-1}|u|^2\dd x.
	\]
	This implies that
	\[
	0\geq\liminf_{j\to\infty}\la\mathscr{L}_{\vc_j}\vec u_j,\vec u_j\ra
	=1-p\int_\rn\ff_0^{p-1}|u_0|^2\dd x.
	\]
	Moreover, we have
	$
	0\geq\liminf_{j\to\infty}\la\mathscr{L}_{\vc_j}\vec u_j,\vec u_j\ra
	\geq
	\la \varUpsilon_0  u,u\ra.
	$
	Finally we have from the assumption   $\la\vec u_j,\vec\ff_j\ra=\la\vec u_j,\partial_{x_j}\vec\ff_j\ra=0$ for $1\leq j\leq n$,  that
	$\la  u , \ff_0\ra=\la  u,\partial_{x_j} \ff_0\ra=0$. Therefore Lemma \ref{lemma3.3} shows that $  u=0$. This contradiction  completes the proof.
\end{proof}

Combining the results of Lemma \ref{prop2.1} and Lemma \ref{lem3.1}, we obtain the following stability result.
\begin{theorem}\label{stab-theo}
	Let $\vec\ff\in\ngg(\vec c)$ with $\vc\in B_1(0)\subset\rn$. Then there is $c_0\in(0,1)$ such that any $\vec\ff\in\ngg(\vc)$ with $\vc\in B_{c_0}(0)$ is orbitally stable in $\x$.
\end{theorem}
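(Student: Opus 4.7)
The plan is to derive Theorem \ref{stab-theo} as an almost immediate combination of the two preceding lemmas, since Lemma \ref{lem3.1} supplies precisely the spectral hypothesis required by Lemma \ref{prop2.1}. Concretely, I would first invoke Lemma \ref{lem3.1} to fix a threshold $c_0 \in (0,1)$ with the property that for every $\vc \in B_{c_0}(0)$ and every ground state $\vec\ff \in \ngg(\vc)$, there exists $\delta > 0$ such that
\[
\la \nl \vec u, \vec u\ra \geq \delta \|\vec u\|_\x^2
\]
for all $\vec u \in \x$ subject to the $(n+1)$ orthogonality conditions $\la \vec\ff, \vec u\ra = 0$ and $\la \vec u, \vec\ff_{x_j}\ra = 0$ for $1 \leq j \leq n$.

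With this spectral gap in hand, I would then apply Lemma \ref{prop2.1} to conclude that any such $\vec\ff$ is orbitally stable in $\x$. Under the hood, this invokes Lemma \ref{lem2.1}, which converts the coercivity on the transverse subspace into the quadratic lower bound
\[
\E(\vec u) - \E(\vec \ff) \geq C \inf_{y \in \rn} \|\vec u - \vec\ff(\cdot - y)\|_\x^2
\]
on the $\F$-slice $\vc \cdot \F(\vec u) = \vc \cdot \F(\vec\ff)$ near $\vec\ff$. The standard Lyapunov functional $\E + \vc \cdot \F$, together with the conservation of $\E$ and $\F$ along the flow of \eqref{system} and the local well-posedness from Theorem \ref{local}, then upgrades this lower bound to genuine orbital stability along the lines of \cite{gss}.

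The real content of the argument is not this final assembly but the verification of the spectral gap, and the restriction to $\vc \in B_{c_0}(0)$ is intrinsic to its perturbative nature. Lemma \ref{lem3.1} transfers the positivity of $\varUpsilon_0 = I - \Delta - p\ff_0^{p-1}$ on the codimension-$(n+1)$ subspace orthogonal to $\ff_0$ and $\partial_{x_j}\ff_0$ (Lemma \ref{lemma3.3}, due to Mariş) to nearby ground states via the compactness statement of Theorem \ref{lem3.2}, namely that $\vec\ff_k \to (\ff_0, 0)$ in $\x$ up to translations whenever $\vc_k \to \vec 0$. Outside a small ball around $\vec c = 0$ one loses this spectral control and must retreat to the weaker set-stability conclusion of Theorem \ref{stability-t-1}, which only requires convexity of $d_\vs$ rather than full spectral information.
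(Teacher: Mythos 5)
Your proposal matches the paper's own proof exactly: the paper states Theorem \ref{stab-theo} as an immediate consequence of combining Lemma \ref{lem3.1} (the spectral gap for $\vc$ in a small ball $B_{c_0}(0)$, obtained perturbatively from Lemma \ref{lemma3.3} via the convergence in Theorem \ref{lem3.2}) with Lemma \ref{prop2.1} (which converts that coercivity into orbital stability through Lemma \ref{lem2.1} and the argument of \cite{gss}). Your additional remarks on where the real content lies and why the smallness of $\vc$ is intrinsic are accurate and consistent with the paper's discussion.
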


It is worth noting that the proof of stability in \cite{gss} is based on the spectra of the linearized operator $\nl$. Due to the presence of $\ltt$ in $\nl$, deriving the spectral properties of $\nl$ is not straightforward.

In \cite{ber-lions}, it was demonstrated that $\ff_0$ is the unique ground state of \eqref{c=0} and decays exponentially. Specifically, $\ff_0$ belongs to $m(0)$, and $(\ff_0,0)\in\ngg(0)$. Additionally, it was proved in \cite{wein} that the operator $\varUpsilon_0:=I-\Delta-p\ff^{p-1}_0$ has a unique, simple, negative eigenvalue $\lam_0$ with a corresponding positive, radially symmetric eigenfunction $\chi_0$ that decays exponentially. Moreover, the essential spectrum of $\varUpsilon_0$ is $[\sigma,+\infty)$ for some $\sigma>0$, and the null space of $\varUpsilon_0$ is spanned by ${\partial_{x_j}\ff_0,\;1\leq j\leq n}$.

The operator $\nl$ with $\vec c=\vec0$ is denoted by $\nlz$. It is evident that $\nlz={\rm diag}(\varUpsilon_0, I)$, so the spectral properties of $\varUpsilon_0$ are inherited by $\nlz$. The perturbation theory, as outlined in \cite{kato}, can be applied to prove Theorem \ref{stab-theo} by using the approach from \cite{gss}.

\begin{proposition}\label{cpt-embed-lem}
	There exists $c_0\in(0,1)$ such that for any $\vc\in B_{c_0}(0)\subset\rn$, Equation \eqref{criticalpoint} has a solution $\vec\ff\in \x_\vc:=H_\vc^1(\rn)\times L^2_\vc(\rn)$, and the map $\vc\mapsto\vec\ff$ is continuous from $B_{c_0}(0)$ to $\x_\vc$, where $L^2_\vc(\rn)$ is defined analogously to $H^1_\vc(\rn)$ by replacing $\ho$ with $\lt$. Furthermore, $\vec\ff$ converges to $(\ff_0,0)$ as $\vc\to0$, uniformly in $x\in\rn.$
\end{proposition}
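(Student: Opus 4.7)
The plan is to construct the branch $\vc\mapsto\vec\ff_\vc$ via the implicit function theorem, bifurcating from the unperturbed radial ground state $(\ff_0,0)$ of \eqref{c=0}. By rotation invariance I may take $\vc=ce_1$ with $c\in[0,c_0)$, and set $G=\{\mathsf{R}\in O(n):\mathsf{R}e_1=e_1\}$, so that $H^1_\vc=H^1_G$. The second component of \eqref{criticalpoint} forces $\psi=-\L_\vc^{1/2}\ff$, so the task reduces to finding $\ff\in H^1_G$ solving the scalar equation \eqref{traveling}, after which the pair $(\ff,-\L_\vc^{1/2}\ff)$ automatically lies in $\x_\vc$.

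The decisive step is spectral. Let $\varUpsilon_0=I-\Delta-p\ff_0^{p-1}$ denote the linearization at $(\ff_0,0)$. By the cited Weinstein analysis, $\ker\varUpsilon_0$ in $H^1(\rn)$ is $\mathrm{span}\{\partial_{x_j}\ff_0:1\leq j\leq n\}$. Because $\ff_0$ is radial, $\partial_{x_1}\ff_0$ is $G$-invariant; for $j\geq 2$, however, $\partial_{x_j}\ff_0$ is odd in $x_j$ and hence fails to be $G$-invariant, so
\[
\ker\varUpsilon_0\cap H^1_G = \rr\,\partial_{x_1}\ff_0.
\]
Introduce the codimension-one slice $\widetilde H=\{w\in H^1_G:\langle w,\partial_{x_1}\ff_0\rangle_{L^2}=0\}$. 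Since $\varUpsilon_0$ is self-adjoint and is a compact perturbation of the invertible $I-\Delta$, Fredholm theory shows that $\varUpsilon_0|_{\widetilde H}:\widetilde H\to\widetilde H^{*}$ is an isomorphism. Define
\[
F(c,w)=(I-\Delta+\L_{ce_1})(\ff_0+w)-(\ff_0+w)^p,\qquad (c,w)\in(-c_1,c_1)\times\widetilde H,
\]
with values in $H^{-1}_G$. The dependence $c\mapsto\L_{ce_1}=c^2\L_{e_1}$ is smooth, and the Nemitski operator $u\mapsto u^p$ is $C^1$ from $H^1$ to $H^{-1}$ thanks to the subcritical growth $p<2^{\ast}-1$ and the parity hypothesis $p+1=k/m$. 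Applying the implicit function theorem after Lyapunov--Schmidt projection onto the codimension-one range $\{T\in H^{-1}_G:T(\partial_{x_1}\ff_0)=0\}$ produces $c_0>0$ and a continuous curve $c\mapsto w_c\in\widetilde H$ with $w_0=0$ and
\[
(I-\Delta+\L_{ce_1})(\ff_0+w_c)-(\ff_0+w_c)^p=\mu(c)\,\partial_{x_1}\ff_0
\]
for some continuous scalar $\mu(c)$.

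To show $\mu(c)\equiv 0$, pair this identity with $\partial_{x_1}(\ff_0+w_c)$. Writing $\ff=\ff_0+w_c$, each term on the left-hand side is a total derivative and vanishes: $\langle\ff,\partial_{x_1}\ff\rangle=\langle\nabla\ff,\partial_{x_1}\nabla\ff\rangle=\langle\ff^p,\partial_{x_1}\ff\rangle=0$, and $\langle\L_\vc\ff,\partial_{x_1}\ff\rangle=0$ because $\L_\vc$ is self-adjoint and commutes with $\partial_{x_1}$, so the pairing equals its own negative. Expanding the right-hand side yields $\mu(c)\bigl(\|\partial_{x_1}\ff_0\|_{L^2}^2+\langle\partial_{x_1}\ff_0,\partial_{x_1}w_c\rangle\bigr)=0$, and for $c$ small the parenthesized quantity is close to $\|\partial_{x_1}\ff_0\|_{L^2}^2\neq 0$, forcing $\mu(c)=0$. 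Setting $\ff_\vc=\ff_0+w_c$ and $\psi_\vc=-\L_\vc^{1/2}\ff_\vc$ yields the required $\vec\ff\in\x_\vc$ with $\vc\mapsto\vec\ff$ continuous into $\x$. The uniform convergence $\|\ff_\vc-\ff_0\|_{L^\infty}+\|\psi_\vc\|_{L^\infty}\to 0$ follows by bootstrapping on $(I-\Delta)\ff_\vc=\ff_\vc^p-\L_\vc\ff_\vc$: the bounds $\|\L_\vc u\|_{H^s}\leq|\vc|^2\|u\|_{H^s}$ and $\|\L_\vc^{1/2}u\|_{H^s}\leq|\vc|\|u\|_{H^s}$, combined with tame estimates for $u\mapsto u^p$, upgrade $H^1$-convergence of $w_c$ to $H^s$-convergence for every $s$, and Sobolev embedding yields $L^\infty$-convergence. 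The main obstacle, I expect, is the spectral step—verifying that the kernel of $\varUpsilon_0$ inside $H^1_G$ is exactly the one-dimensional line $\rr\,\partial_{x_1}\ff_0$ and then running the translation-invariance argument cleanly to eliminate $\mu(c)$; once these are secured, the remainder of the construction is routine.
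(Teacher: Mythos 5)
Your proposal is correct and follows the same overall strategy as the paper --- an implicit-function-theorem continuation from $(\ff_0,0)$ inside the cylindrically symmetric class --- but it diverges at the one point that actually matters. The paper works with the full map $\wp(c,\vec\ff)=S_c'(\vec\ff)$ on $\x_c$ and asserts that $\nlz$ is invertible there because $\partial_{x_1}\ff_0\notin H^1_{\vec c}$; since $\partial_{x_1}\ff_0=\ff_0'(|x|)\,x_1/|x|$ depends only on $(x_1,|x_\bot|)$, it is in fact invariant under every $\mathsf{R}\in O(n)$ fixing $e_1$ and therefore \emph{does} lie in $H^1_{\vec c}$, so the symmetry restriction removes only the $n-1$ transverse kernel directions and leaves the one-dimensional kernel $\rr\,\partial_{x_1}\ff_0$. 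Your Lyapunov--Schmidt reduction onto the slice $\widetilde H$, followed by the pairing with $\partial_{x_1}(\ff_0+w_c)$ and the translation-invariance cancellation forcing $\mu(c)=0$, is exactly the extra step needed to close this point, and it is sound: the integrations by parts you invoke (including $\langle\L_\vc\ff,\partial_{x_1}\ff\rangle=0$ via self-adjointness of $\L_\vc$ and its commutation with $\partial_{x_1}$) all check out, and the nondegeneracy of the bracket $\|\partial_{x_1}\ff_0\|_\lt^2+\langle\partial_{x_1}\ff_0,\partial_{x_1}w_c\rangle$ for small $c$ is legitimate. Two further differences are cosmetic or additive: you reduce to the scalar equation \eqref{traveling} via $\psi=-\L_\vc^{\frac12}\ff$ rather than working with the $2\times 2$ system (equivalent, since the second component of \eqref{criticalpoint} is exactly this relation), and you supply the elliptic bootstrap yielding the uniform convergence $\vec\ff\to(\ff_0,0)$ in $L^\infty$, which the proposition claims but the paper's proof does not address. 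Net effect: your route is slightly longer but more robust than the paper's.
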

\begin{proof}
	Assume, without loss of generality in \eqref{criticalpoint}, that $\vc=(c,0,\cdots,0)$. Define the map $\wp:\rr\times\x_c\to L^2_c(\rn)\times L^2_c(\rn)$ by
	$
	\wp(c,\vec\ff)=S'_c(\vec\ff)$. Then, $\frac{\partial\wp}{\partial\vec\ff}$, presented in \eqref{second-der}, is defined from $\rr\times\x_c$ to $B(\x_c,L^2_c(\rn)\times L^2_c(\rn))$. It is clear that $(\ff_0,0)\in\x_c$  and $\wp(0,(\ff_0,0))=0$. Recall that ${\rm null}(\nlz)$ is an $n$-dimensional subspace of $L^2(\rn)\times L^2(\rn)$ generated by ${(\partial_{x_j}\ff_0,0),\;1\leq j\leq n}$. Since $\partial_{x_1}\ff\notin H_c^1(\rn)$, then $\nlz:\x_c\to L^2_c(\rn)\times L^2_c(\rn))$ is invertible. Hence, the continuity of $\wp$ and $\frac{\partial\wp}{\partial\vec\ff}$ implies from the Implicit Function Theorem that there is $c_0>0$ and a continuous map $c\to \vec\ff$ from $(-c_0,c_0)$ to $\x_c$ such that $\wp(c,\vec\ff)=0$ for all $c\in(-c_0,c_0).$
\end{proof}

Note that $\L_\vc$ is a compact operator  {on} $\lt$, so it follows from \cite{kato} that $\sigma_{\rm ess}(\nl)=\sigma_{\rm ess}(\nlz)$. Since both $\varUpsilon_\vc$ and $\nl$ vary in $\vc$ continuously in the space of closed operators, the proof follows from standard theorems (see e.g. Theorems 2.14 and 2.17 in \cite{kato}) on the continuity of eigenvalues of linear operators with respect to perturbations. To obtain some information about the spectrum of $\varUpsilon_\vc$, we need to recall some  notations  and definitions.

Let $C(\lt)$ be the space of all closed operators on $\lt$. A metric $\hat{\delta}$ on this space may be defined as follows: for any $S,T\in C(\lt)$,
\[
\hat{\delta}(S,T)=\|P_S-P_T\|_{B(\lt\times\lt)},
\]
where $P_T$ and $P_S$ are   the orthogonal projections on their graphs, and
$\norm{\cdot}_{B(\lt\times\lt)}$ denotes  the operator norm on the space of bounded operators
on $ \lt\times\lt$.
\begin{proposition}\label{spectrum-pertu}
	There exists $c_0\in(0,1)$ such that $\varUpsilon_\vc$  with any $\vc\in B_{c_0}(0)$, defined on  $\ho$, possesses the following properties:
	\begin{enumerate}[(1)]
		\item  $\varUpsilon_\vc$ has simple negative eigenvalue $\lam_\vc$ with positive, radially symmetric eigenfunction;
		\item The null space of $\varUpsilon_\vc$ is spanned by $\{\partial_{x_j}\ff,\;1\leq j\leq n\}$, where $\vec\ff=(\ff,\psi)$ is the traveling wave of \eqref{traveling};
		\item The essential spectrum of $\varUpsilon_\vc$ is positive and bounded away zero;
		\item The essential spectrum of $\varUpsilon_\vc$ is in $[\sigma,+\infty)$ and $(\lam_\vc,\chi_\vc)\to(\lam_0,\chi_0)$ in $\rr\times\lt$ as $\vc\to0$.
	\end{enumerate}
	Moreover,  the above spectral properties are also valid for $\nl$   with any $\vc\in B_{c_0}(0)$.
\end{proposition}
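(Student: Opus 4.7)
The strategy is to regard $\varUpsilon_\vc$ and $\nl$ as small perturbations of the $\vc=0$ operators $\varUpsilon_0$ and $\nlz = \mathrm{diag}(\varUpsilon_0, I)$, whose spectral structure is completely known from \cite{wein}, and to invoke Kato's perturbation theory. The key input is Proposition~\ref{cpt-embed-lem}, which supplies $\ff_\vc \to \ff_0$ uniformly on $\rn$ as $\vc \to 0$. Combined with the exponential decay of $\ff_0$ and the algebraic decay of $\ff_\vc$ from Theorem~\ref{decay-regularity}, this yields $p\ff_\vc^{p-1} \to p\ff_0^{p-1}$ in $L^\infty(\rn) \cap L^q(\rn)$ for every $q > n/2$. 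Hence the potential perturbation in $\varUpsilon_\vc - \varUpsilon_0$ tends to zero in operator norm from $\ho$ to $\lt$, while the off-diagonal entries $\pm\L_\vc^{1/2}$ appearing in $\nl - \nlz$ have $\lt \to \lt$ operator norm bounded by $|\vc|$ since the Fourier symbol of $\L_\vc$ is pointwise bounded by $|\vc|^2$. Both convergences translate into smallness in the metric $\hat\delta$ on $C(\lt)$.

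\textbf{Essential spectrum (part (3) and the first half of (4)).} Since $p\ff_\vc^{p-1}$ is a bounded multiplication operator vanishing at infinity, it is relatively compact with respect to $I-\Delta$; Weyl's essential spectrum theorem then yields $\sigma_{\rm ess}(\varUpsilon_\vc) = \sigma_{\rm ess}(I-\Delta) = [1,+\infty)$, independently of $\vc$. The analogous argument on $\lt \times \lt$ (the potential entry is relatively compact, while $\pm\L_\vc^{1/2}$ is a bounded perturbation of small operator norm for $|\vc|<c_0$) gives $\sigma_{\rm ess}(\nl) = [1,+\infty)$. Thus one may take $\sigma = 1$ in part (4), which simultaneously yields (3).

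\textbf{Discrete eigenvalues and null space (parts (1), (2) and the convergence in (4)).} Since $\lam_0 < 0$ is a simple eigenvalue of $\varUpsilon_0$ isolated from the essential spectrum, Theorems 2.14 and 2.17 in \cite{kato} guarantee that for $|\vc| < c_0$ the operator $\varUpsilon_\vc$ has a simple isolated eigenvalue $\lam_\vc$ near $\lam_0$, still strictly negative, with rank-one eigenprojection converging in operator norm to that of $\lam_0$. A suitable $L^2$-normalization then gives $\chi_\vc \to \chi_0$ in $\lt$; elliptic regularity upgrades this to locally uniform convergence, and since $\chi_0 > 0$ throughout $\rn$, positivity of $\chi_\vc$ follows for $|\vc|$ small. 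Cylindrical symmetry of $\chi_\vc$ with respect to $\vc$ is obtained by restricting $\varUpsilon_\vc$ to the invariant subspace $L^2_\vc$ (using that $\ff_\vc$ has this symmetry by Theorem~\ref{symmetry}) and invoking simplicity of the ground eigenvalue. For the null space, differentiating $S'_\vc(\vec\ff_\vc) = 0$ in each $x_j$ (as already carried out in the proof of Lemma~\ref{lem2.1}) produces $n$ explicit null vectors $\vec\ff_{x_j}$ of $\nl$; Kato stability of the total multiplicity of eigenvalues in a small neighborhood of $0$ caps $\dim\ker\nl$ by $\dim\ker\nlz = n$, giving equality.

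\textbf{Main obstacle.} The crux is that $\L_\vc^{1/2}$ is a zeroth-order pseudo-differential operator: it is not compact and therefore the perturbation $\nl - \nlz$ is not of any weaker class than the principal part. What rescues the argument is the quantitative bound $\|\L_\vc^{1/2}\|_{\lt\to\lt} \le |\vc|$, which makes the perturbation small in operator norm and therefore in the metric $\hat\delta$, precisely the hypothesis needed for the abstract perturbation machinery of \cite{kato} to control all isolated eigenvalues of $\nlz$ away from $[1,+\infty)$. A secondary subtlety is that part (2), read literally for $\varUpsilon_\vc$ alone, is delicate when $\vc \neq 0$ because differentiation of the profile equation gives $\varUpsilon_\vc \partial_{x_j}\ff_\vc = -\L_\vc \partial_{x_j}\ff_\vc$, which is not identically zero; the natural interpretation, consistent with the ``moreover'' clause, is that the null-space assertion concerns $\nl$, for which the counting argument above applies cleanly.
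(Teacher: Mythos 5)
Your proposal is correct and follows essentially the same route as the paper: perturbation of $\varUpsilon_0$ and $\nlz=\mathrm{diag}(\varUpsilon_0,I)$, whose spectrum is known from Weinstein, carried out via the gap metric $\hat\delta$ and Theorems 2.14 and 2.17 of Kato. The paper's written proof consists only of the estimate $\hat{\delta}(\varUpsilon_\vc,\varUpsilon_0)\lesssim\|\varUpsilon_\vc-\varUpsilon_0\|_{B(\lt)}\to0$ together with a deferral of everything else to those standard theorems, so your version is a filled-in account of the same argument. Two of your remarks in fact improve on the paper. First, the paper justifies $\sigma_{\rm ess}(\nl)=\sigma_{\rm ess}(\nlz)$ by asserting that $\L_\vc$ is compact on $\lt$; as you observe, a nonzero zeroth-order Fourier multiplier commutes with translations and is never compact, and your replacement --- the quantitative bound $\|\L_\vc^{1/2}\|_{\lt\to\lt}\le|\vc|$ --- is the right repair. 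Your exact identification $\sigma_{\rm ess}(\nl)=[1,+\infty)$ overshoots slightly: a symbol computation for the non-compact part shows the bottom of the essential spectrum drops to $1-|\vc|$, which still yields (3) and (4) for $|\vc|<1$. Second, your observation that differentiating the profile equation gives $\varUpsilon_\vc\partial_{x_j}\ff=-\L_\vc\partial_{x_j}\ff\neq 0$ for $\vc\neq 0$ correctly exposes an imprecision in part (2) as stated for $\varUpsilon_\vc$ alone; the paper only ever uses the identity $\nl\vec\ff_{x_j}=0$ (see Lemma \ref{lem2.1}), so your reading of (2) as a statement about $\nl$ matches the paper's actual usage.
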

\begin{proof}
	First, we demonstrate that $\hat{\delta}(\varUpsilon_\vc,\varUpsilon_0)\to0$ as $\vc\to0$. It is worth noting that $\varUpsilon_\vc-\varUpsilon_0\in B(\lt)$ due to continuity, and $\varUpsilon_\vc-\varUpsilon_0\to0$ in $B(\lt)$ as $\vc\to0$. By applying Theorems 2.14 and 2.17 from \cite{kato}, we obtain:
	\[
	\hat{\delta}(\varUpsilon_\vc-\varUpsilon_0)
	\leq 2\left(1+\|\varUpsilon_\vc\|_{B(\lt)}^2\right)\|\varUpsilon_\vc-\varUpsilon_0\|_{B(\lt)}\to0,\quad\text{as}\;\vc\to0,
	\]
	 owing to the uniformly bounded nature of $\|\varUpsilon_\vc\|_{B(\lt)}$ with respect to $\vc$. 
\end{proof}

\section{Orbital instability}\label{section-orb-ins}
In this section, we establish  conditions under which the   traveling wave   $\vec\ff$ of \eqref{hbouss} is orbitally unstable. We use the approach developed in \cite{ribeiro}  based on ones in \cite{gss}. Let $O_{\vec\ff}$ be the orbit of   $\vec\ff=(\ff,-\ltt\ff)\in\ngg(\vc)$, that is 
\[
O_{\vec\ff}=\{\vec\ff(\cdot-r),\;r\in\rn\}.
\]

 Our main instability result reads as follows. 
\begin{theorem}\label{main-inst-thm}
	Let $\vec\ff\in\ngg(\vc)$ and $|\vc|<1$. If there exists $\vec\Phi\in \x$ such that $\la\F'(\vec\ff),\vec\Phi\ra=0$ and $$\scal{\nl\vec\Phi,\vec\Phi}<0,$$ then $O_{\vec\ff}$ is unstable, where
	$
	\nl=S_\vc''(\vec\ff).
	$
\end{theorem}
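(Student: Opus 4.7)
The plan is to follow the Grillakis--Shatah--Strauss instability mechanism as adapted by Ribeiro, exploiting the negative second variation of $\svc$ along a direction that preserves momentum. First I would extend $\vec\Phi$ to a smooth tangent vector field $\vec Y$ on an open neighborhood $U$ of the orbit $O_{\vec\ff}$ in $\x$, requiring: (i) $\vec Y(\vec\ff)=\vec\Phi$; (ii) $\vec Y$ is translation-equivariant, so that $\vec Y(\vec u(\cdot-r))=\vec Y(\vec u)(\cdot-r)$; and (iii) $\scal{\F'(\vec u),\vec Y(\vec u)}=0$ for every $\vec u\in U$. The last property is the tangency to the level set $\{\F=\F(\vec\ff)\}$ and is achievable via an implicit-function-theorem argument, since $\F'(\vec\ff)\neq 0$ (otherwise $\F$ would be degenerate at $\vec\ff$, which is excluded by the structure of the momentum functional on $\x$).

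Using $\vec Y$, I would build a one-parameter curve $\omega:(-\delta,\delta)\to\x$ with $\omega(0)=\vec\ff$, $\omega'(0)=\vec\Phi$, and $\F(\omega(s))=\F(\vec\ff)$ for all $s$. Because $\svc'(\vec\ff)=0$ and $\scal{\nl(\vec\ff)\vec\Phi,\vec\Phi}<0$, a Taylor expansion yields
\begin{equation*}
\svc(\omega(s))=\svc(\vec\ff)+\tfrac{s^{2}}{2}\scal{\nl(\vec\ff)\vec\Phi,\vec\Phi}+o(s^{2})<\svc(\vec\ff),\qquad 0<|s|<\delta_{0}.
\end{equation*}
Since $\vc\cdot\F$ is conserved by the flow, this translates into $\E(\vec u(t))+\vc\cdot\F(\vec u(t))=\E(\omega(s))+\vc\cdot\F(\vec\ff)<\svc(\vec\ff)$ for the solution $\vec u$ of \eqref{system} starting from $\vec u(0)=\omega(s)$. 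Thus we obtain initial data arbitrarily close to $\vec\ff$ such that the quantity $\svc$ remains strictly below its orbital value $d(\vc)$ for all time.

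Next I would introduce the Lyapunov-type functional
\begin{equation*}
A(t)=\scal{\njj^{-1}\bigl(\vec u(t)-\vec\ff(\cdot-r(t))\bigr),\,\vec Y(\vec u(t))},
\end{equation*}
where $r(t)\in\rn$ is the modulation parameter chosen so that $\vec u(t,\cdot+r(t))$ realizes the infimum of $\|\vec u(t)-\vec\ff(\cdot-z)\|_{\x}$ over $z$; existence and smoothness of $r(t)$ follow from the implicit function theorem as long as $\vec u$ stays in a tubular neighborhood of $O_{\vec\ff}$. Using the Hamiltonian structure $\vec u_{t}=\njj\E'(\vec u)$, translation invariance of $\F$, and property (iii) of $\vec Y$, one computes $A'(t)$ and, upon insertion of the expansion $\vec u(t)=\vec\ff(\cdot-r(t))+\vec v(t)$ with $\|\vec v\|_{\x}$ small, one should obtain
\begin{equation*}
A'(t)=-\scal{\nl(\vec\ff)\vec\Phi,\vec\Phi}+O\bigl(\|\vec v(t)\|_{\x}\bigr)\geq\eta>0
\end{equation*}
for all $t$ while $\vec u(t)$ remains near $O_{\vec\ff}$. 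Combined with the a priori bound $|A(t)|\leq C\,\mathrm{dist}_{\x}(\vec u(t),O_{\vec\ff})$, this forces $\vec u(t)$ to exit every fixed tubular neighborhood in finite time, contradicting orbital stability.

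The main obstacle is the second step in the computation of $A'(t)$: matching the leading-order term to $-\scal{\nl(\vec\ff)\vec\Phi,\vec\Phi}$. This requires carefully exploiting the constraint $\scal{\F'(\vec u),\vec Y(\vec u)}=0$, the symmetry $\scal{S'_{\vec c}(\vec\ff),\partial_{x_{j}}\vec\ff}=0$ (to eliminate modulation-parameter contributions), and the antisymmetry of $\njj$. A secondary technical difficulty is the construction of $\vec Y$: since the only spectral information available in the general case $\vc\neq 0$ is the hypothesis $\scal{\nl\vec\Phi,\vec\Phi}<0$, one cannot appeal to spectral decomposition as in the classical GSS setting; instead the geometric argument of \cite{ribeiro}, which works with one explicit direction rather than projecting onto the negative subspace of $\nl$, must be adapted. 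Once these two points are handled, the contradiction with orbital stability follows, completing the proof.
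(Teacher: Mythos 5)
Your overall strategy is the right one --- it is essentially the Ribeiro/GSS geometric argument that the paper itself uses (translation-equivariant extension of $\vec\Phi$ to a momentum-tangent vector field, a curve along which $\svc$ drops below $\svc(\vec\ff)$, and a Lyapunov functional whose derivative is controlled by $\scal{S_\vc'(\vec u),\fb(\vec u)}$). But there is a genuine gap at the heart of your argument: the claimed estimate
\begin{equation*}
A'(t)=-\scal{\nl(\vec\ff)\vec\Phi,\vec\Phi}+O\bigl(\|\vec v(t)\|_{\x}\bigr)\geq\eta>0
\end{equation*}
is false as stated. Up to the modulation terms, $A'(t)$ reduces to $\pm\scal{S_\vc'(\vec u(t)),\fb(\vec u(t))}$, and since $S_\vc'(\vec\ff)=0$ one has $S_\vc'(\vec u)=\nl(\vec\ff)\vec v+o(\|\vec v\|_{\x})$ with $\vec v=\vec u-\vec\ff(\cdot-r(t))$; pairing with $\fb(\vec u)=\vec\Phi+O(\|\vec v\|_{\x})$ gives $A'(t)=\scal{\nl(\vec\ff)\vec v,\vec\Phi}+o(\|\vec v\|_{\x})=O(\|\vec v(t)\|_{\x})$, with \emph{no} zeroth-order term $-\scal{\nl\vec\Phi,\vec\Phi}$. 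A pointwise Taylor expansion around the orbit therefore gives a quantity that degenerates as $\vec u(t)$ approaches $O_{\vec\ff}$ and cannot produce a uniform $\eta>0$. The uniform lower bound must instead come from a global variational input: one shows (as in the first step of the paper's proof) that for every $\vec\psi_0$ in a tubular neighborhood there is $s=s_{\vec\psi_0}\in(-\rho,\rho)$ with $\PP(\fu(s,\vec\psi_0))=0$, hence $S_\vc(\fu(s,\vec\psi_0))\geq S_\vc(\vec\ff)$ by the minimizing property of $\vec\ff$ on $N$ (Theorem \ref{theorem-variational-2}); combining this with the second-order Taylor formula along the $\fb$-flow (whose quadratic term is negative near $\vec\ff$ by hypothesis) yields $S_\vc(\vec\ff)\leq S_\vc(\vec\psi_0)+s\,\scal{S_\vc'(\vec\psi_0),\fb(\vec\psi_0)}$, and then conservation of $S_\vc$ along the PDE flow gives $-\scal{S_\vc'(\vec u(t)),\fb(\vec u(t))}\geq\rho^{-1}\bigl(S_\vc(\vec\ff)-S_\vc(\vec u(0))\bigr)>0$ uniformly in $t$. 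This step, which you do not supply and which your Taylor expansion cannot replace, is exactly where the constrained-minimization characterization of the ground state enters.

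A secondary technical point: your Lyapunov functional uses $\njj^{-1}$, but $\njj$ contains $(-\de)^{\frac12}$ and is not boundedly invertible on $\x$; the paper sidesteps this by taking $A(\vec\psi)=\scal{\tau_{-\Lambda(\vec\psi)}\njj\vec\Phi,\vec\psi}$, i.e.\ applying $\njj$ to the fixed smooth profile $\vec\Phi$ rather than inverting it on the solution. You should adopt that form (or restrict to data in the dense set where $(-\de)^{-\frac12}u\in\lt$) for the functional to be well defined and bounded on the tubular neighborhood.
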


To prove this theorem, we assume, without loss of generality, that $\vc=(c,0,\cdots,0)$.

\begin{lemma}\label{lem-1-ins}
	Let $|c|<1$ and $\vec\ff\in\ngg(\vc)$. Then there exists $\epsilon>0$ and a unique $C^1$-map $\Lambda:B_\epsilon(O_{\vec\ff})\to\rr$ such that $\Lambda(\vec\ff)=0$ and $\Lambda(\tau_r\vec \psi)=\Lambda(\vec\psi)-r$. Moreover, there holds that
	\begin{enumerate}[(i)]
		\item $\scal{\tau_{\Lambda(\vec\psi)}\vec \psi,\njj\vec\ff}=0$,
		\item $\Lambda'(\vec\psi)=-\frac{\tau_{-\Lambda(\vec\psi)}\njj\vec\ff}{\scal{\F'(\vec\psi),\tau_{-\Lambda(\vec\psi)}\vec\ff}}$ 
	\end{enumerate}
	for any $\vec\psi\in B_\epsilon(O_{\vec\ff})$ and $r\in\rr$, where $\tau_\cdot\vec\psi(x)=\vec\psi(x-\cdot)$ and $B_\epsilon(O_{\vec\ff})$ is the $\epsilon$-neighborhood of $O_{\vec\ff}$ in $\x$.
\end{lemma}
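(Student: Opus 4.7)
The plan is to apply the implicit function theorem to a scalar equation encoding condition (i). After the reduction $\vc=(c,0,\ldots,0)$ with $c\neq0$, I introduce the $C^1$ functional
\[
\mathcal{F}:\x\times\rr\to\rr,\qquad \mathcal{F}(\vec\psi,r)=\scal{\tau_r\vec\psi,\njj\vec\ff},
\]
where $\tau_r\vec\psi(x)=\vec\psi(x-r\vec e_1)$. The antisymmetry $\njj^\ast=-\njj$, immediate from its matrix form, yields $\mathcal{F}(\vec\ff,0)=\scal{\vec\ff,\njj\vec\ff}=0$.

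The crux is the non-degeneracy $\partial_r \mathcal{F}(\vec\ff,0)\neq 0$. Direct differentiation gives $\partial_r \mathcal{F}(\vec\ff,0)=-\scal{\vec\ff_{x_1},\njj\vec\ff}$. Using $\vec\ff=(\ff,-\ltt\ff)$, anti-self-adjointness of $\ltt$, its commutation with $(-\Delta)^{\frac12}$ and with $\partial_{x_1}$, and the identity $(-\Delta)^{\frac12}\ltt=-(\vec c\cdot\nabla)$, a short calculation collapses this pairing to
\[
\partial_r \mathcal{F}(\vec\ff,0)=-2c\,\|\ff_{x_1}\|_\lt^2,
\]
which is nonzero since $c\neq 0$ and, by the variational characterization together with Theorem \ref{symmetry}, $\ff_{x_1}\not\equiv 0$. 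The implicit function theorem then produces $\epsilon_0>0$ and a unique $C^1$ map $\Lambda$ on the ball $B_{\epsilon_0}(\vec\ff)\subset\x$ with $\Lambda(\vec\ff)=0$ and $\mathcal{F}(\vec\psi,\Lambda(\vec\psi))=0$, which is exactly condition (i) locally.

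To extend $\Lambda$ to an $\epsilon$-neighborhood of the full orbit $O_{\vec\ff}$ and to obtain the covariance $\Lambda(\tau_r\vec\psi)=\Lambda(\vec\psi)-r$, I exploit the translation-equivariance $\mathcal{F}(\tau_r\vec\psi,s-r)=\mathcal{F}(\vec\psi,s)$: if $\vec\psi$ is within distance $\epsilon$ of $\tau_{r_0}\vec\ff$, I define $\Lambda(\vec\psi):=\Lambda(\tau_{-r_0}\vec\psi)-r_0$, and invoke the uniqueness clause of the implicit function theorem to verify independence of the choice of $r_0$ (for $\epsilon$ small enough). Formula (ii) then follows from implicit differentiation of $\mathcal{F}(\vec\psi,\Lambda(\vec\psi))=0$, giving $\Lambda'(\vec\psi)=-(\partial_r \mathcal{F})^{-1}\partial_{\vec\psi}\mathcal{F}$. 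The numerator $\partial_{\vec\psi}\mathcal{F}(\vec\psi,r)$ is identified with $\tau_{-r}\njj\vec\ff$ via the $L^2$ pairing on $\x$, while the denominator $\partial_r \mathcal{F}(\vec\psi,r)$ is rewritten as $\scal{\F'(\vec\psi),\tau_{-r}\vec\ff}$ through integration by parts together with the structural identity $\njj\F_j'(\vec u)=\vec u_{x_j}$, reflecting that $\F_j$ generates translations in the $x_j$-direction. The main technical obstacle is the non-degeneracy computation, which demands careful bookkeeping of commutators between the non-local operators $\ltt$, $(-\Delta)^{\frac12}$ and the spatial derivatives, together with the parallel identification of $\partial_r\mathcal{F}$ with the $\F'$-pairing appearing in (ii).
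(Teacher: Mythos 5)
Your construction is the same as the paper's: the proof there defines exactly the auxiliary functional $\Lambda_1(r,\vec\psi)=\scal{\tau_r\vec\psi,\njj\vec\ff}$ (your $\mathcal{F}$ with the arguments swapped), gets $\Lambda_1(0,\vec\ff)=0$ from skew-adjointness of $\njj$, and invokes the implicit function theorem, leaving the covariance $\Lambda(\tau_r\vec\psi)=\Lambda(\vec\psi)-r$ and parts (i)--(ii) as "straightforward". Your explicit non-degeneracy computation is the genuinely useful addition: writing $\njj\vec\ff=(-(-\de)^{\frac12}\ltt\ff,\,-(-\de)^{\frac12}\ff)$ and using $(-\de)^{\frac12}\ltt=\pm\,\vc\cdot\nabla$ does give $\partial_r\mathcal{F}(\vec\ff,0)=\pm 2c\|\ff_{x_1}\|_\lt^2$, and this correctly exposes that the construction requires $c\neq0$: at $c=0$ one has $\vec\ff=(\ff_0,0)$ and $\scal{\vec\ff_{x_1},\njj\vec\ff}=0$, so the bare assertion $\frac{\partial\Lambda_1}{\partial r}(0,\vec\ff)>0$ in the paper fails there. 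Since the lemma is only used downstream for $|\vc|\in(0,1)$, this is a hypothesis worth recording rather than an obstruction. Your extension to the orbit neighborhood via equivariance and uniqueness is also fine and more complete than the paper's.

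The one step that does not go through as written is the identification of the denominator in (ii). From $\mathcal{F}(\vec\psi,r)=\scal{\vec\psi,\njj\tau_{-r}\vec\ff}$ one gets
\[
\partial_r\mathcal{F}(\vec\psi,r)=\scal{\njj\vec\psi_{x_1},\tau_{-r}\vec\ff},
\]
and the structural identity $\njj\F_1'(\vec u)=\vec u_{x_1}$ turns this into $\scal{\njj^2\F_1'(\vec\psi),\tau_{-r}\vec\ff}=\scal{\de\,\F_1'(\vec\psi),\tau_{-r}\vec\ff}$, which differs from the claimed $\scal{\F_1'(\vec\psi),\tau_{-r}\vec\ff}$ by a factor of $\de$. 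A check at $\vec\psi=\vec\ff$, $r=0$ confirms the two quantities are genuinely different: $\pm 2c\|\ff_{x_1}\|_\lt^2$ versus $\scal{\F_1'(\vec\ff),\vec\ff}=2\F_1(\vec\ff)=\mp\frac2c\norm{\ltt\ff}_\lt^2$. So integration by parts alone cannot close this step; no commutator bookkeeping will remove the extra Laplacian. To be fair, this discrepancy is already latent in the statement of (ii) itself, and the paper's proof does not derive it either; the formula actually produced by this choice of $\mathcal{F}$ has denominator $\scal{\njj\vec\psi_{x_1},\tau_{-\Lambda(\vec\psi)}\vec\ff}$, and whichever normalization one adopts must be propagated consistently into Lemma \ref{lem-2-ins} and the proof of Theorem \ref{main-inst-thm}.
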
  
\begin{proof}
	Define the map $\Lambda_1:\rr\times B_\epsilon(O_{\vec\ff})\to\rr$ by \[
	\Lambda_1(r,\vec\psi)=\scal{\tau_r\vec\psi,\njj\vec\ff}.
	\]
	Then the skew-adjointness of $\njj$ implies that $\Lambda_1(0,\vec\ff)=0$. Moreover, it is easy to see that $\frac{\partial\Lambda _1}{\partial r}(0,\vec\ff)>0$, so the implicit function theorem shows the existence of the unique map $\Lambda$ such that $\Lambda(\vec\ff)$ and $\Lambda(\tau_r\vec \psi)=\Lambda(\vec\psi)-r$. Parts (i) and (ii) can be derived straightforwardly.
\end{proof}

Now we define a vector field similar to one defined in \cite{bss} to study the instability.
\begin{lemma}\label{lem-2-ins}
	Let $\Lambda$ be as in Lemma \ref{lem-1-ins} and $\vec\Phi$ as in Theorem \ref{main-inst-thm}.	There exists a $C^1$-map $\fb:B_\epsilon(O_{\vec\ff})\to \x$ such that $\tau_\cdot\fb=\tau_\cdot\fb$, $\fb(\vec\ff)=\vec\Phi$ and $\scal{\fb(\vec\psi),\F'(\vec\psi)}=0$ for all $\vec\psi\in B_\epsilon(O_{\vec\ff})$. Moreover, $\fb$ satisfies
	\begin{equation}
		\njj\fb(\vec\psi)=\tau_{-\Lambda(\vec\psi)}\njj\vec\Phi+\scal{\F'(\vec\psi),\tau_{-\Lambda(\vec\psi)}\vec\Phi}\Lambda'(\vec\psi).
	\end{equation}
	Furthermore, there exists a unique  local $C^1$-solution $\fu(s,\vec\psi_0)$ of the initial value problem $\frac{\dd\vec u}{\dd s}=\fb (\vec u)$ with $\fu(0,\vec\psi_0)=\vec\psi_0\in B_\epsilon(O_{\vec\ff})$  such that $\frac{\dd \fu}{ds}(0,\vec\ff)=\vec\Phi$, $\F(\fu(s,\vec\psi_0))=\F(\vec\psi_0)$ and
	\[
	\fu(s,\tau_r\vec\psi_0)=\tau_r\fu(s,\vec\psi_0) 
	\]
	for all $r\in\rr$ and $\vec\psi_0\in B_\epsilon(O_{\vec\ff})$.
\end{lemma}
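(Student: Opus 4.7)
The plan is to construct $\fb$ explicitly as a correction of $\tau_{-\Lambda(\vec\psi)}\vec\Phi$ by a scalar multiple of $\tau_{-\Lambda(\vec\psi)}\vec\ff$, chosen so the orthogonality condition $\scal{\fb(\vec\psi),\F'(\vec\psi)}=0$ is built in. Specifically, I would set
\[
\fb(\vec\psi) \;:=\; \tau_{-\Lambda(\vec\psi)}\vec\Phi \;-\; \frac{\scal{\F'(\vec\psi),\tau_{-\Lambda(\vec\psi)}\vec\Phi}}{\scal{\F'(\vec\psi),\tau_{-\Lambda(\vec\psi)}\vec\ff}}\,\tau_{-\Lambda(\vec\psi)}\vec\ff.
\]
At $\vec\psi=\vec\ff$ one has $\Lambda(\vec\ff)=0$ and $\scal{\F'(\vec\ff),\vec\ff}\ne 0$ (this is nonzero for a genuine ground state at nonzero wave speed, as seen from the Pohozaev/variational identities), so by continuity the denominator stays bounded away from zero on a neighborhood $B_\epsilon(O_{\vec\ff})$. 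The hypothesis $\scal{\F'(\vec\ff),\vec\Phi}=0$ of Theorem \ref{main-inst-thm} then forces the correction term to vanish at $\vec\ff$, yielding $\fb(\vec\ff)=\vec\Phi$. The $C^1$ regularity of $\fb$ is inherited from that of $\Lambda$ (Lemma \ref{lem-1-ins}), and the translation equivariance $\fb\circ\tau_r=\tau_r\circ\fb$ follows from the cocycle relation $\Lambda(\tau_r\vec\psi)=\Lambda(\vec\psi)-r$ together with the fact that $\F'$ is translation equivariant.

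To recover the announced formula for $\njj\fb$, I would apply $\njj$ to the definition of $\fb$ and use that $\njj$ commutes with spatial translations (since $(-\de)^{1/2}$ does), obtaining
\[
\njj\fb(\vec\psi) \;=\; \tau_{-\Lambda(\vec\psi)}\njj\vec\Phi \;-\; \frac{\scal{\F'(\vec\psi),\tau_{-\Lambda(\vec\psi)}\vec\Phi}}{\scal{\F'(\vec\psi),\tau_{-\Lambda(\vec\psi)}\vec\ff}}\,\tau_{-\Lambda(\vec\psi)}\njj\vec\ff.
\]
Substituting the expression $\Lambda'(\vec\psi)=-\tau_{-\Lambda(\vec\psi)}\njj\vec\ff/\scal{\F'(\vec\psi),\tau_{-\Lambda(\vec\psi)}\vec\ff}$ from Lemma \ref{lem-1-ins}(ii) rearranges this into the stated identity, closing the algebraic portion of the lemma.

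For the initial value problem $\frac{d\vec u}{ds}=\fb(\vec u)$, I would invoke the Picard--Lindel\"{o}f theorem on the Banach space $\x$: since $\fb$ is $C^1$ (hence locally Lipschitz) on $B_\epsilon(O_{\vec\ff})$, there is a unique local $C^1$-flow $\fu(s,\vec\psi_0)$ with $\fu(0,\vec\psi_0)=\vec\psi_0$. Evaluating at $s=0$, $\vec\psi_0=\vec\ff$ yields $\frac{d\fu}{ds}(0,\vec\ff)=\fb(\vec\ff)=\vec\Phi$. Conservation of $\F$ along trajectories follows directly from the built-in constraint:
\[
\frac{d}{ds}\F(\fu(s,\vec\psi_0)) \;=\; \scal{\F'(\fu),\fb(\fu)} \;=\; 0.
\]
Finally, translation equivariance of the flow is obtained by observing that both $s\mapsto \tau_r\fu(s,\vec\psi_0)$ and $s\mapsto\fu(s,\tau_r\vec\psi_0)$ solve the same initial value problem (by equivariance of $\fb$), whence uniqueness forces them to coincide.

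The main obstacle is twofold: first, guaranteeing that the denominator $\scal{\F'(\vec\psi),\tau_{-\Lambda(\vec\psi)}\vec\ff}$ remains bounded away from zero on a definite neighborhood of $O_{\vec\ff}$, which demands a quantitative continuity argument anchored on $\scal{\F'(\vec\ff),\vec\ff}\neq 0$; and second, the careful bookkeeping that turns the raw expression for $\njj\fb$ into the compact form involving $\Lambda'(\vec\psi)$, thereby tying the construction back to Lemma \ref{lem-1-ins}. Once these are in place, the ODE theory is standard.
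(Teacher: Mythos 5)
Your construction is exactly the one the paper uses: the paper defines $\fb(\vec\psi)=\tau_{-\Lambda(\vec\psi)}\vec\Phi-\frac{\scal{\F'(\vec\psi),\tau_{-\Lambda(\vec\psi)}\vec\Phi}}{\scal{\F'(\vec\psi),\tau_{-\Lambda(\vec\psi)}\vec\ff}}\tau_{-\Lambda(\vec\psi)}\vec\ff$ (modulo evident typos in its displayed formula) and then asserts that the remaining properties follow by direct calculation from Lemma \ref{lem-1-ins}. Your write-up supplies those omitted verifications correctly, including the needed nondegeneracy $\scal{\F'(\vec\ff),\vec\ff}\neq 0$, so it is essentially the same proof in fuller detail.
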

\begin{proof}
	It is enough to define
	\[
	\fb(\vec\psi)=\tau_{-\Lambda(\vec\psi)}\vec\Phi-
	\frac{\scal{\F'(\vec\psi),\tau_{-\Lambda(\vec\psi)}}}
	{\scal{\F'(\vec\psi),\tau_{-\Lambda(\vec\ff)}}}
	\tau_{-\Lambda(\vec\psi)}\vec\ff.
	\]
	The proof is deduced from the properties of $\Lambda$ in Lemma \ref{lem-1-ins} by direct calculations, so we omit the details.
\end{proof}

\begin{proof}[Proof of Theorem \ref{main-inst-thm}]
	The proof follows from the same lines as one in \cite{fele}, but we include a sketch here for completeness.
	
	First, we show that there are $\epsilon>0$ and $\rho>0$ such that for all $\vec\psi_0\in B_\epsilon(O_{\vec\ff})$, there exists $s=s_{\vec\psi_0}\in(-\rho,\rho)$ such that
	\begin{equation}\label{conv-es}
		S(\vec\ff)\leq S(\vec\psi_0)+\scal{S'(\vec\psi_0),\fb(\vec\psi_0)}s.
	\end{equation}
	Indeed, by applying the Taylor expansion theorem, we obtain  that
	\[
	S(\fu(s,\vec\psi_0))
	=S(\vec\psi_0)
	+\scal{S'(\vec\psi_0),\fb(\vec\psi_0)}s+
	\frac{s^2}2\Theta(s,\vec\psi_0) ,
	\]
	where
	\[
	\Theta(s,\vec\psi_0)
	=
	\scal{\nl(\fu(rs,\vec\psi_0))\fb(\fu(rs,\vec\psi_0)),\fb(\fu(rs,\vec\psi_0))}+\scal{S'(\fu(rs,\vec\psi_0)),\fb'(\fu(rs,\vec\psi_0))(\fu(rs,\vec\psi_0))}.
	\]
	From the definition of $\fb$, it is evident that $\Theta(s, \vec\ff) < 0$, ensuring that
	$
	S(\fu(s,\vec\psi_0))\leq S(\vec\psi_0)+\frac{s^2}2\Theta(s,\vec\psi_0)
	$
	for all $\vec\psi\in B_\epsilon(O_{\vec\ff})$. This   implies that 
	\[
	S(\vec\ff)\leq S(\fu(\varrho,\vec\ff))
	-
	\scal{S'(\fu(\varrho,\vec\ff)),\fb(\fu(\varrho,\vec\ff))}\varrho
	\]
	for all $|\varrho|<\rho$. Furthermore, it follows from the assumptions of Theorem \ref{main-inst-thm} and the continuity of $\Theta$ and $\fu$ that $S(\fu(\varrho,\vec\ff))<S(\vec\ff)$ if $|\varrho|<\rho$ for sufficiently small $\rho>0$. Additionally, we have
	\begin{equation}\label{zt-est}
		\scal{S'(\fu(\varrho,\vec\ff)),\fb(\fu(\varrho,\vec\ff))}<0
	\end{equation}
	for all $\varrho\in(0,\rho)$. The desired result follows if we show that for each $\vec\psi_0$, there is $s$ such that $\PP(\fu(s,\psi_0))=0$ because $\vec\ff$ minimizes $S$ subject to  $\PP(\ff)$. This is derived by defining $g(s,\psi)=\PP(\fu(s,\psi))$ and utilizing the facts that $f(0,\vec\Phi)=0$, $\frac{\partial g}{\partial s}(0,\vec\ff)=\scal{\PP'(\vec\ff),\vec\Phi}\neq0$ (see \cite[Lemma 4.5]{fele}) and the assumption $\scal{\nl\vec\Phi,\vec\Phi}<0$.
		Now define
	\[
	A(\vec\psi)=\scal{\tau_{-\Lambda(\vec\psi)}\njj\vec\Phi,\vec\psi}.
	\]
	It is clear from Lemmas \ref{lem-1-ins} and \ref{lem-2-ins} that  $A:B_\epsilon(O_{\vec\ff})\to\rr$  is a $C^1$-map and $A'=\njj\fb$. Suppose, by contradiction, there is $\{\varrho_j\}\subset(0,,\rho)$ such that $\varrho_j\to0$ as $j\to\infty$ and $T_j=\sup\{t;\;\vec u_j(t)\in B_\epsilon(O_{\vec\ff})\}$ is unbounded as $j\to\infty$, where $\vec \ff_j=\fu(\varrho_j,\vec\ff)$ and $\vec u_j$ is the unique solution of \eqref{hbouss} with initial data $\vec\ff_j$. It is deduced from the continuity of $\fu$ that $\vec \ff_j\to\vec\ff$ in $\x$. The invariance of $S$ under the flow map of \eqref{system} reveals that $S(\vec\ff_j) < S(\vec\ff)$, and then \eqref{zt-est} leads to $S(\vec u_j) < S(\vec\ff)$ for all $t \in [0, T_j]$.  Moreover, $\scal{S'(\vec u_j), \fb(\vec u_j)} < 0$ for all $t \in [0, T_j]$, implies that $$\fd = \sett{\vec u \in \x,\; S(\vec u) < S(\vec\ff),\; \scal{S'(\vec u), \fb(\vec u)} < 0}$$ is invariant under the flow of \eqref{system}. The previous analysis shows that for each $t \in [0, T_j]$, there exists $s = s(\vec u_j(t)) \in (-\rho, \rho)$ satisfying \eqref{conv-es}. The invariance $S$ and the fact $\vec u_j(t)\in\fd$ imply for all $t\in[0,T_j]$ that 
	\[
	-\scal{S'(\vec u_j),\fb(\vec u_j)}\geq\frac{S(\vec\ff)-S(\vec\ff_j)}{\rho}=\chi_j>0.
	\]
	Since $\vec u_j(t)\in B_\epsilon(O_{\vec\ff})$, then $A(\vec u_j(t))$ is bounded on $[0,T_j]$. By Lemma \ref{lem-2-ins},
	\[
	\begin{split}
		\frac{\dd}{\dd t}A(\vec u_j(t))&=
		\scal{A'(\vec u_j(t)),\vec u_j'(t)}\\&
		=\scal{\njj\fb(\vec u_j(t)),\njj\E'(\vec u_j(t))}\\&
		=\scal{\fb(\vec u_j(t)),S'(\vec u_j(t))}=-\chi_j.
	\end{split}
	\]
	Hence, $A(\vec u_j(t)) \to -\infty$ as $t \to \infty$, indicating that $T_j$ must be finite. This contradiction completes the proof.
\end{proof}

\begin{corollary}
	Assume that there is a $C^2$-map $c\mapsto\vec\ff\in\ngg(\vc)$. If $\vec\ff\in\ngg(\vc)$ such that $d''(|\vc|)<0$, then $O_{\vec \ff}$  is  unstable.
\end{corollary}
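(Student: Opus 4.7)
The goal is to produce $\vec\Phi\in\x$ fulfilling the two hypotheses of Theorem \ref{main-inst-thm}. I would take, without loss of generality, $\vc=ce_1$ with $c=|\vc|$. The cylindrical symmetry of ground states from Theorem \ref{symmetry} gives $\F_j(\vec\ff)\equiv 0$ for $j\geq 2$, so the vectorial orthogonality $\scal{\F'(\vec\ff),\vec\Phi}=0$ reduces to the scalar requirement $\scal{\F_1'(\vec\ff),\vec\Phi}=0$ on the class of cylindrically symmetric perturbations.

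Two structural ingredients feed the construction. First, the $C^2$-regularity of the map $c\mapsto\vec\ff(c)$ permits differentiating the Euler--Lagrange identity $S_{ce_1}'(\vec\ff(c))=0$ in $c$. Writing $S_\vc=\E+\vc\cdot\F$ and applying the chain rule produces the fundamental relation
\[
\nl(\vec\ff)\,\partial_c\vec\ff=-\F_1'(\vec\ff),
\]
so that one further $c$-differentiation of Lemma \ref{lemma-diff-dc} identifies $\scal{\F_1'(\vec\ff),\partial_c\vec\ff}=d''(|\vc|)$. Second, Proposition \ref{spectrum-pertu} supplies the spectral picture of the self-adjoint operator $\nl(\vec\ff)$: a single simple negative eigenvalue $\lambda_{\vc}<0$ with eigenfunction $\vec\chi$; kernel spanned by the translations $\{\partial_{x_j}\vec\ff\}_{j=1}^{n}$, each of which lies in $(\F_1'(\vec\ff))^\perp$ by translation invariance of $\F_1$; and essential spectrum bounded away from zero. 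The cylindrical symmetry inherited from Theorem \ref{symmetry} lets us take $\vec\chi$ in the cylindrically symmetric class.

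The desired direction is then built by correcting $\vec\chi$ with a multiple of $\partial_c\vec\ff$:
\[
\vec\Phi:=\vec\chi+\alpha\,\partial_c\vec\ff,\qquad \alpha:=-\frac{\scal{\F_1'(\vec\ff),\vec\chi}}{d''(|\vc|)},
\]
so that $\scal{\F_1'(\vec\ff),\vec\Phi}=0$ by construction. Expanding $\scal{\nl\vec\Phi,\vec\Phi}$ and eliminating every cross-term through $\nl\partial_c\vec\ff=-\F_1'(\vec\ff)$ and $\nl\vec\chi=\lambda_{\vc}\vec\chi$ collapses everything to
\[
\scal{\nl\vec\Phi,\vec\Phi}=\lambda_{\vc}\|\vec\chi\|_\x^2+\frac{\scal{\F_1'(\vec\ff),\vec\chi}^2}{d''(|\vc|)}.
\]
Since $\lambda_{\vc}<0$ and $d''(|\vc|)<0$ by hypothesis, both summands are non-positive and the first is strictly negative, so $\scal{\nl\vec\Phi,\vec\Phi}<0$; the sub-case $\scal{\F_1'(\vec\ff),\vec\chi}=0$ is handled by simply taking $\vec\Phi=\vec\chi$. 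Theorem \ref{main-inst-thm} then delivers the $H$-instability of $O_{\vec\ff}$.

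The main obstacle is the spectral preparation rather than the algebra. One must carefully track the sign in the identity $\scal{\F_1'(\vec\ff),\partial_c\vec\ff}=d''(|\vc|)$ through Lemma \ref{lemma-diff-dc}, and verify the simplicity, cylindrical symmetry, and spectral gap of the negative eigenvalue of $\nl(\vec\ff)$. These are supplied by Proposition \ref{spectrum-pertu} only in a perturbative neighborhood of $\vc=\vec 0$; consequently, the corollary is genuinely useful in that regime, while extending the spectral framework globally in $|\vc|<1$ would enlarge its applicability. The remaining algebraic manipulation above is routine.
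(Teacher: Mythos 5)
Your construction is correct in its algebra and leads to the same conclusion, but it takes a genuinely different route from the paper, and the difference matters for the range of validity. The paper's proof takes $\vec\Phi=\vec\ff-\frac{2d'(|\vc|)}{d''(|\vc|)}D(\vec\ff)$ with $D(\vec\ff)=\frac{\dd}{\dd |\vc|}\vec\ff$, i.e.\ it uses the ground state itself, rather than the negative eigenfunction $\vec\chi$ of $\nl$, as the base direction to be corrected by $\partial_c\vec\ff$. The whole point of that choice is that the needed negativity comes for free from the constraint identity: since $P(\vec\ff)=\la S_\vc'(\vec\ff),\vec\ff\ra=0$ and $2J(\ff)=(p+1)K(\ff)$, one gets $\scal{\nl(\vec\ff)\vec\ff,\vec\ff}=(1-p)(p+1)K(\ff)<0$ with no spectral information about $\nl$ whatsoever, and the same identities $\nl D(\vec\ff)=-\F'(\vec\ff)$, $\scal{\nl D(\vec\ff),D(\vec\ff)}=-d''(|\vc|)$ that you use then give $\scal{\nl\vec\Phi,\vec\Phi}=\scal{\nl\vec\ff,\vec\ff}+4(d'(|\vc|))^2/d''(|\vc|)<0$. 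Your version instead needs the simple negative eigenvalue and eigenfunction of $\nl(\vec\ff)$, which the paper only establishes (Proposition \ref{spectrum-pertu}, via perturbation from $\vc=0$) for $|\vc|$ small; you flag this yourself, but it means your argument proves the corollary only in that perturbative regime, whereas the stated corollary carries no smallness assumption and the paper's choice of $\vec\Phi$ covers all $|\vc|<1$. A secondary caution: the sign conventions relating $d'$, $\F(\vec\ff)$ and $\scal{\nl\partial_c\vec\ff,\partial_c\vec\ff}$ are used inconsistently in the paper (compare Lemma \ref{lemma-diff-dc} with Lemma \ref{lem-tayl}); your computation is internally consistent with the convention $d'(|\vc|)=\vs\cdot\F(\vec\ff)$, but it is worth fixing one convention and tracking it through, since the final inequality $\lambda_{\vc}\|\vec\chi\|^2+\scal{\F'(\vec\ff),\vec\chi}^2/d''(|\vc|)<0$ depends on both signs coming out right.
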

\begin{proof}
	Define \[
	\vec\Phi=\vec\ff-\frac{2d'(|\vc|)}{d''(|\vc|)}\fd(\vec\ff),
	\]
	where
	$D(\vec\ff)=\frac{\dd}{\dd|c|}\vec\ff$.
	It is straightforward to see from Lemma \ref{lemma-diff-dc} that $\scal{\F'(\vec\ff),\Phi}=0$.
	On the other hand, we note that
	\[
	\begin{split}
		&\scal{\nl \fd(\vec\ff),\vec\ff}=-2d'(|\vc|),\qquad
		\scal{\nl \fd(\vec\ff),\fd(\vec\ff)}=-d''(|\vc|).
	\end{split}
	\]
	Thus, we get from 	$\scal{\nl \vec\Phi,\vec\Phi}$ and the assumption of Theorem \ref{main-inst-thm} that
	\[
	\scal{\nl \vec\Phi,\vec\Phi}
	=	\scal{\nl \vec\ff,\vec\ff}+
	4\frac{(d'(|\vc|))^2}{d''(|\vc|)}<0.
	\]
\end{proof}

The following corollary now completes the previous results of the case $\vc=0$.
\begin{corollary}\label{instability-cor}
	Let $\vec\ff\in\ngg(\vc)$ with $|\vc|\in(0,1)$. Then $O_{\vec\ff}$  is  unstable  if 
	\begin{equation}
		p>\frac{2-n+\sqrt{(n-2)^2+\varepsilon_\vc(4+2n+\varepsilon_\vc)}}{\varepsilon_\vc},\qquad \varepsilon_\vc=|\vc|^{-2}-1.
	\end{equation}
\end{corollary}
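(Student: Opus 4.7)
The strategy is to apply Theorem \ref{main-inst-thm} with the explicit perturbation
\[
\vec\Phi:=(\ff,\ltt\ff)\in\x,
\]
which lies in $\x$ because $\ltt$ is bounded on $\lt$ with operator norm at most $|\vec c|$. The decisive point is that flipping the sign of the second entry relative to $\vec\ff=(\ff,-\ltt\ff)$ places $\vec\Phi$ in the kernel of $\F'(\vec\ff)$.

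To verify the orthogonality I will first pair \eqref{traveling} with $\ff$ and use the first Pohozaev identity in \eqref{pohoz} to obtain the basic relation $\la\L_\vc\ff,\ff\ra=-\|\ltt\ff\|_\lt^2$. Writing $\vec c\cdot\F(\vec u)=\la v,\ltt u\ra$ and linearizing at $\vec\ff$, and exploiting the skew-adjointness of $\ltt$ (forced by the form of $S_\vc'$ in \eqref{criticalpoint}), I obtain $\vec c\cdot\F'(\vec\ff)\vec\Phi=\la\L_\vc\ff,\ff\ra+\|\ltt\ff\|_\lt^2=0$; the componentwise version, with each skew-adjoint multiplier $T_j=\partial_j(-\Delta)^{-1/2}$ replacing $\ltt$, gives $\F_j'(\vec\ff)\vec\Phi=\la\ltt\ff,T_j\ff\ra-\la\ltt\ff,T_j\ff\ra=0$ for every $j$.

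Next I will compute the bilinear form. Applying $\nl(\vec\ff)$ from \eqref{second-der} to $\vec\Phi$ and using the profile equation $\ff-\Delta\ff+\L_\vc\ff=\ff^p$ to eliminate the linear part, a short calculation gives
\[
\la\nl(\vec\ff)\vec\Phi,\vec\Phi\ra \;=\; -(p-1)\|\ff\|_{L^{p+1}(\rn)}^{p+1}+4\|\ltt\ff\|_\lt^2.
\]

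Finally, to convert the negativity of this quantity into the stated threshold on $p$, I will combine the second Pohozaev identity in \eqref{pohoz}, $\|\ff\|_\lt^2-\|\ltt\ff\|_\lt^2=\tfrac{n+2-p(n-2)}{2(p+1)}\|\ff\|_{L^{p+1}}^{p+1}$, with the $\lt$-operator bound $\|\ltt\ff\|_\lt^2\le|\vec c|^2\|\ff\|_\lt^2$. The latter is equivalent to $\|\ff\|_\lt^2-\|\ltt\ff\|_\lt^2\ge \varepsilon_\vc\|\ltt\ff\|_\lt^2$, so the two relations together yield
\[
4\|\ltt\ff\|_\lt^2 \;\le\; \frac{2(n+2-p(n-2))}{(p+1)\varepsilon_\vc}\|\ff\|_{L^{p+1}}^{p+1},
\]
and hence $\la\nl(\vec\ff)\vec\Phi,\vec\Phi\ra<0$ whenever $(p-1)(p+1)\varepsilon_\vc>2(n+2-p(n-2))$. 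Rewriting this as the quadratic inequality $\varepsilon_\vc p^2+2(n-2)p-(\varepsilon_\vc+2n+4)>0$ and applying the quadratic formula gives precisely the stated bound on $p$, and Theorem \ref{main-inst-thm} then delivers the $\x$-instability of $O_{\vec\ff}$. The main obstacle is to recognise that the correct direction is $+\ltt\ff$ (not $-\ltt\ff$), since this is what simultaneously respects the momentum constraint and makes the $|\vec c|$-dependence enter cleanly through the elementary $\lt$-operator bound on $\ltt$.
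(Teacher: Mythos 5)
Your proposal is correct and follows essentially the same route as the paper: the same test direction $\vec\Phi=(\ff,\ltt\ff)$ fed into Theorem \ref{main-inst-thm}, the same trivial cancellation giving $\la\F'(\vec\ff),\vec\Phi\ra=0$, the identity $\la\nl(\vec\ff)\vec\Phi,\vec\Phi\ra=(1-p)\|\ff\|_{L^{p+1}}^{p+1}+4\|\ltt\ff\|_\lt^2$, and then the second Pohozaev identity combined with $\|\ltt\ff\|_\lt\le|\vc|\,\|\ff\|_\lt$ to reach the quadratic $\varepsilon_\vc p^2+2(n-2)p-(\varepsilon_\vc+2n+4)>0$. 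The only difference is cosmetic bookkeeping (you bound $4\|\ltt\ff\|_\lt^2$ directly by $\tfrac{2(n+2-p(n-2))}{(p+1)\varepsilon_\vc}\|\ff\|_{L^{p+1}}^{p+1}$, while the paper first rewrites everything in terms of $\|\ff\|_\lt^2$ and $\|\ltt\ff\|_\lt^2$), and both yield the identical threshold.
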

\begin{proof}
	For $\vec\ff=(\ff,-\ltt\ff)\in\ngg(\vc)$, define $\vec\Phi=(\ff,\ltt\ff)$.
	It is clear that $\scal{\F'(\vec\ff),\vec\Phi}=0$. So, by Theorem \ref{main-inst-thm}, it is enough to find conditions under which $\scal{\nl \vec\Phi,\vec\Phi}<0$.
	Recall  from \eqref{second-der}  that
	\[
	\nl =\left(\begin{array}{cc}
		I-\Delta-p\ff^{p-1}	&-\ltt\\
		\ltt	& I
	\end{array}\right).
	\]
	It follows from the definition of $\nl$  for  any $\vec u\in\x$ that
	\[
	\scal {   \nl \vec u,\vec u}
	=2J(u)-\scal{ K''(\ff)u,u}+2q(\vec u).
	\]
	Then we obtain from \eqref{pohoz} after some calculations that
	\[
	\begin{split}
		\scal{\nl \vec\Phi,\vec\Phi}&
		=
		(1-p)\|\ff\|_{L^{p+1}(\rn)}+4\norm{\ltt\ff}_\lt^2	\\
		&=\frac{2(1-p^2)}{2n+(p+1)(2-n)}\left(\|\ff\|_\lt^2-\norm{\ltt\ff}_\lt^2\right)+4\norm{\ltt\ff}_\lt^2\\
		&=\frac{2(1-p^2)}{2n+(p+1)(2-n)}\|\ff\|_\lt^2+ \frac{2(p+1)(3-2n+p)+8n}{2n+(p+1)(2-n)}\norm{\ltt\ff}_\lt^2.
	\end{split}
	\]
	Using  $|\ltt\ff|\leq|\vc||\ff|$, we conclude that
	\[
	\begin{split}
		\scal{\nl \vec\Phi,\vec\Phi}
		&\leq \frac{2|\vc|^{-2}(1-p^2)+2(p+1)(3-2n+p)+8n}{(p+1)(2-n)+2n}
		\norm{\ltt\ff}_\lt^2<0 
	\end{split}
	\]
	provided
	\[
	p>\frac{2-n+\sqrt{(n-2)^2+\varepsilon_\vc(4+2n+\varepsilon_\vc)}}{\varepsilon_\vc}.
	\]
\end{proof}

\section{Strong instability}\label{section-str-ins}
In this section, we study the strong instability of the  traveling waves  of \eqref{hbouss}. More precisely, we show that there is a sequence of initial data approaching the   traveling wave   while the  associated solutions of \eqref{hbouss} blow up in finite time.

The main result of this section is articulated as follows.

\begin{theorem}\label{inst-theo}
	Assume that $\vc\in B_1(0)\subset\rn$. Suppose also that  $1<p<2^\ast-1$ and $|\vec c|^2<\frac{p-1}{p+3}$. 
	Then any $\vec \ff=(\ff,-\ltt\ff)\in \ngg(\vc)$ of \eqref{hbouss} is strongly unstable,  that is,  for any $\epsilon>0$, there exists $\vec u_0=(u_0,u_1)\in\x$ such that $\|\vec u_0-\vec\ff\|_\x<\epsilon$ and the solution $\vec u(t)$ of \eqref{hbouss} with
	initial value $\vec u_0$ blows up in finite time. 
\end{theorem}

\begin{figure}[ht]
	\begin{center}
		\scalebox{0.28}{\includegraphics{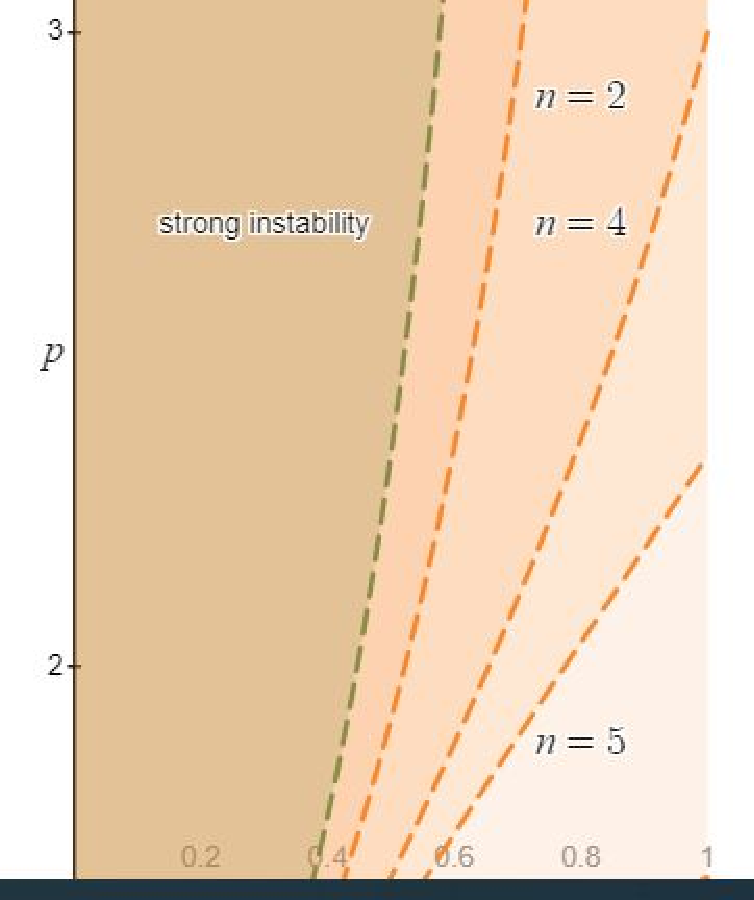}   	}
		\caption{The regions of orbital instability and strong instability for different $n=2,5,8$ and $|c|\in[0,1)$. } \label{nume-slt-2}
\end{center}\end{figure}

Following \cite{jeantanaka}, it is clear that the functional $\ns$ admits a mountain
pass geometry with $|\vec c|<1$, and also the mountain pass value gives the least energy level. Since $\ltt$ is homogeneous of the zeroth-order, we announce the following  lemma and refer  to \cite{jeantanaka} for the proof.
\begin{lemma}\label{mountain}
	There holds that $c_m=c_g$, where $c_g=\inf\{\ns(\ff),\;\ff\in\ho\setminus\{0\},\;\ns'(\ff)=0\}$ and 
	\[
	c_m=\inf_{\gamma\in\Gamma}\max_{t\in[0,1]}\ns(\gamma(t))>0 
	\] 
	with  
	$
	\Gamma=\{\gamma\in C([0,1],\ho),\;\gamma(0)=0,\;\ns(\gamma(1))<0\}.
	$
\end{lemma}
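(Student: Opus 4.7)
The plan is to establish $c_m = c_g$ via a double inequality together with a separate verification that $c_m > 0$, adapting the Jeanjean–Tanaka philosophy to the present pure-power setting. The key algebraic fact is that for any $\varphi \in \ho\setminus\{0\}$ the scaling $t\mapsto t\varphi$ gives
$$\ns(t\varphi) = t^2 J(\varphi) - t^{p+1} K(\varphi),$$
and $\PP(\varphi) := \langle \ns'(\varphi),\varphi\rangle = 2J(\varphi) - (p+1)K(\varphi)$.

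For the inequality $c_m \leq c_g$, I would pick $\varphi_\ast \in G_1(\vec c)$ (which exists and satisfies $\ns(\varphi_\ast)=c_g$, $\PP(\varphi_\ast)=0$ by Remark~\ref{remark-vect}). Since $\PP(\varphi_\ast)=0$, the function $t\mapsto \ns(t\varphi_\ast)$ has a unique critical point on $(0,\infty)$ at $t=1$, which is a strict global maximum because $p>1$; moreover $\ns(t\varphi_\ast)\to -\infty$ as $t\to\infty$. Choose $t_0>1$ with $\ns(t_0\varphi_\ast)<0$ and set $\gamma_\ast(s) = s t_0 \varphi_\ast$ for $s\in[0,1]$. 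Then $\gamma_\ast\in\Gamma$ and $\max_{s\in[0,1]}\ns(\gamma_\ast(s)) = \ns(\varphi_\ast) = c_g$, whence $c_m\leq c_g$.

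For the reverse inequality $c_g\leq c_m$, I would show that every path $\gamma\in\Gamma$ meets the Pohozaev manifold $\mathcal{P}=\{u\in\ho\setminus\{0\}: \PP(u)=0\}$. Near $s=0$, the coercivity $J(u)\geq \tfrac12(1-|\vec c|^2)\|u\|_{H^1}^2$ together with the Sobolev embedding $K(u)\leq C\|u\|_{H^1}^{p+1}$ gives $\PP(\gamma(s))>0$ for small $s>0$. At $s=1$, $\ns(\gamma(1))<0$ yields $J(\gamma(1))<K(\gamma(1))$ and hence $\PP(\gamma(1)) < 2K(\gamma(1))-(p+1)K(\gamma(1)) = -(p-1)K(\gamma(1))<0$. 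By continuity of $s\mapsto\PP(\gamma(s))$, there is $s_\ast\in(0,1)$ with $\gamma(s_\ast)\in\mathcal{P}$, and by Remark~\ref{remark-vect} (identifying $c_g$ with $m_1(\vec c)=\inf_{\mathcal P}\ns$) we get $\ns(\gamma(s_\ast))\geq c_g$. Taking infimum over $\Gamma$ yields $c_m\geq c_g$.

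Finally, $c_m>0$ follows from the standard mountain-pass geometry: since $|\L_\vec c^{1/2}u|_{L^2}\leq |\vec c|\|u\|_{L^2}$ and \eqref{vector-gn} give $\ns(u)\geq \tfrac12(1-|\vec c|^2)\|u\|_{H^1}^2 - C\|u\|_{H^1}^{p+1}$, one can fix $\rho>0$ small enough so that $\alpha := \inf\{\ns(u):\|u\|_{H^1}=\rho\}>0$; any $\gamma\in\Gamma$ with $\ns(\gamma(1))<0$ must have $\|\gamma(1)\|_{H^1}>\rho$, so it crosses the sphere of radius $\rho$ and $c_m\geq\alpha$. The main conceptual obstacle one might worry about is that the multiplier of $\L_\vec c$ is homogeneous of degree zero, which breaks the Pohozaev dilation $u(\cdot/t)$ used in Jeanjean–Tanaka for general nonlinearities; however, because the nonlinearity here is a pure power, the elementary ray-scaling $t\mapsto t\varphi$ together with the identity $\PP(\varphi_\ast)=0$ entirely bypasses this difficulty, and the nonlocal term is absorbed cleanly into the coercivity estimate.
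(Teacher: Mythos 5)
Your argument is correct, but it is genuinely different from what the paper does: the paper gives no proof at all, simply asserting that $\ns$ has mountain-pass geometry because $\L_{\vec c}$ is homogeneous of degree zero and referring the reader to Jeanjean--Tanaka \cite{jeantanaka}, whose argument for general nonlinearities runs through the dilation path $t\mapsto u(\cdot/t)$ and the Pohozaev identity. You instead exploit the pure-power structure: on the ray $t\mapsto t\varphi$ the functional is the explicit polynomial $t^2J(\varphi)-t^{p+1}K(\varphi)$, so the path through a least-energy critical point $\varphi_\ast$ (with $\PP(\varphi_\ast)=0$) peaks exactly at $c_g$, giving $c_m\le c_g$; and every admissible path must cross the Nehari manifold $\mathcal{P}=\{\PP=0\}$, on which $\ns\ge m_1(\vec c)=c_g$ by Remark~\ref{remark-vect}, giving $c_m\ge c_g$. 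This is more elementary and self-contained than the citation, at the cost of leaning on the Nehari characterization of Remark~\ref{remark-vect} (which the paper also states without full proof). One small imprecision: the claim that $\PP(\gamma(s))>0$ ``for small $s>0$'' is not literally true, since a path in $\Gamma$ may vanish on an initial interval, where $\PP(\gamma(s))=0$ trivially; the standard repair, which you essentially already have in your positivity paragraph, is to set $s_0=\sup\{s:\|\gamma(s)\|_{H^1}\le\rho\}$ with $\rho$ small, note $s_0<1$ and $\|\gamma(s_0)\|_{H^1}=\rho$ so that $\PP(\gamma(s_0))>0$ and $\gamma(s)\neq0$ for $s\ge s_0$, and then apply the intermediate value theorem on $[s_0,1]$; this guarantees the crossing point lies in $\mathcal{P}\setminus\{0\}$. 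With that one-line fix the proof is complete.
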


\begin{lemma}\label{equiv-min}
Let $\ff$ be in $G(\vec c)$ with $\vc\in B_1(0)\subset\rn$. Assume that $1<p<2^\ast-1$ and $|\vec c|^2<\frac{p-1}{p+3}$. 
	Then \[
	\ns(\ff)=\inf
	\left\{\ns(v),\;v\in\ho\setminus\{0\},\;\nk(v)=0\right\},
	\]
	where
	\[
	\nk(u)=\frac{2\al-\beta n}2\left(\|u\|_{\lt}^2-\|\ltt u\|_\lt^2\right)+\frac{2\al-\beta (n-2)}2\|\nabla u\|_\lt^2-(\al(p+1)-\beta n)K(u)
	\]
	such that $\alpha(p-1)\geq2\beta$ and
	\[
	\begin{cases}
		2\al=\beta n,&\beta>0,\\
		2\al>\beta n,&\beta\geq0.
	\end{cases}
	\]

\end{lemma}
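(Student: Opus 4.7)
My plan is a two-step variational reduction. First I would verify that the ground state $\ff$ itself satisfies $\nk(\ff)=0$, giving the easy direction $\inf \le \ns(\ff)$. Then for any competitor $v\in\ho\setminus\{0\}$ with $\nk(v)=0$ I would exploit the two-parameter scaling $v_\lambda(x):=\lambda^\alpha v(\lambda^\beta x)$ to produce a rescaled representative $v_{\lambda_v}$ lying on the Nehari manifold $\{\PP=0\}$ with $\ns(v_{\lambda_v})\le \ns(v)$, whereupon Remark~\ref{remark-vect} --- which identifies $\ns(\ff)$ with $\inf\{\ns(u):u\in\ho\setminus\{0\},\,\PP(u)=0\}$ --- closes the chain $\ns(v)\ge \ns(v_{\lambda_v})\ge \ns(\ff)$.

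For the first direction, I would substitute the Pohozaev identities \eqref{pohoz}, namely $\|\ff\|_\lt^2-\|\L_\vc^{\frac12}\ff\|_\lt^2=\tfrac{n+2-p(n-2)}{2}K(\ff)$ and $\|\nabla\ff\|_\lt^2=\tfrac{n(p-1)}{2}K(\ff)$, into the definition of $\nk(\ff)$. The coefficients telescope via the algebraic identity $(2\alpha-\beta n)(n+2-p(n-2))+(2\alpha-\beta(n-2))n(p-1)=4(\alpha(p+1)-\beta n)$, which cancels the third term of $\nk$ and yields $\nk(\ff)=0$, so $\ff$ is admissible.

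For the reverse direction, setting $a:=2\alpha-\beta(n-2)$, $b:=2\alpha-\beta n$, $c:=\alpha(p+1)-\beta n$ and using that $\L_\vc$ is a Fourier multiplier of degree zero, one computes
\[
g(\lambda):=\ns(v_\lambda)=\tfrac{\lambda^a}{2}\|\nabla v\|_\lt^2+\tfrac{\lambda^b}{2}\bigl(\|v\|_\lt^2-\|\L_\vc^{\frac12}v\|_\lt^2\bigr)-\tfrac{\lambda^c}{p+1}\|v\|_{L^{p+1}(\rn)}^{p+1},
\]
together with the key identity $\lambda g'(\lambda)=\nk(v_\lambda)$; in particular $g'(1)=0$. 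The hypotheses give $a\ge b\ge 0$, $c>b$, $c\ge a$, so
\[
\lambda^{1-c}g'(\lambda)=\tfrac{a}{2}\|\nabla v\|^2\lambda^{a-c}+\tfrac{b}{2}\bigl(\|v\|^2-\|\L_\vc^{\frac12}v\|^2\bigr)\lambda^{b-c}-\tfrac{c}{p+1}\|v\|_{L^{p+1}}^{p+1}
\]
is non-increasing on $(0,\infty)$ --- strictly so away from a borderline sub-case --- forcing $\lambda=1$ to be the unique global maximum of $g$, and hence $\ns(v_\lambda)\le \ns(v)$ for every $\lambda>0$. Meanwhile the Nehari trace $\PP(v_\lambda)=\lambda^a\|\nabla v\|^2+\lambda^b(\|v\|^2-\|\L_\vc^{\frac12}v\|^2)-\lambda^c\|v\|_{L^{p+1}}^{p+1}$ is positive for small $\lambda$ (since $|\vc|<1$ gives $\|v\|^2-\|\L_\vc^{\frac12}v\|^2\ge(1-|\vc|^2)\|v\|^2>0$) and tends to $-\infty$ as $\lambda\to\infty$, producing by continuity some $\lambda_v>0$ with $\PP(v_{\lambda_v})=0$; Remark~\ref{remark-vect} then gives $\ns(v_{\lambda_v})\ge \ns(\ff)$, closing the argument.

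The delicate point is the monotonicity step: one must check the two parameter regimes of the hypothesis uniformly --- in Case (ii) the strict monotonicity follows from $b>0$ together with $c>b$, while in Case (i) it relies on $c>a$ enforced by the strict inequality $\alpha(p-1)>2\beta$. The exact $L^2$-critical boundary $p=1+4/n$ paired with $2\alpha=\beta n$ (where simultaneously $b=0$ and $c=a$) is the only configuration where $\lambda^{1-c}g'$ is merely constant; fortunately in that degenerate configuration $g$ itself is identically constant in $\lambda$, so $\ns(v_{\lambda_v})=\ns(v)$ and the chain $\ns(v)\ge \ns(\ff)$ still holds. The constraint $|\vc|^2<(p-1)/(p+3)$ plays no direct role in this variational statement but is inherited from the ambient strong-instability framework of Theorem~\ref{inst-theo}, where the sign of $\|v\|^2-\|\L_\vc^{\frac12}v\|^2$ must be controlled in the blow-up virial estimate.
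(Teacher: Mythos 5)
Your argument is correct, and in substance it is the scaling argument that the paper delegates wholesale to Theorems 1 and 2 of \cite{lecoz}: the entire mechanism is the identity $\lambda g'(\lambda)=\nk(v_\lambda)$ for $v_\lambda=\lambda^\al v(\lambda^\beta\cdot)$, the monotonicity of $\lambda^{-c}\,\lambda g'(\lambda)$ under the stated sign conditions on $(\al,\beta)$ (which indeed give $c\geq a\geq b\geq0$ and $c>b$), and the Pohozaev computation showing $\nk(\ff)=0$. The one genuine difference is the closing step: \cite{lecoz} turns the ray $\lambda\mapsto v_\lambda$ into an admissible mountain-pass path and invokes $c_m=c_g$ (Lemma \ref{mountain}), whereas you land on the Nehari constraint $\PP=0$ and invoke Remark \ref{remark-vect}; since the paper establishes both characterizations of $\ns(\ff)$, either anchor is legitimate, and yours has the advantage of being entirely self-contained within the paper rather than outsourced to the reference. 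Your treatment of the borderline configuration $2\al=\beta n$, $\al(p-1)=2\beta$ (equivalently $p=1+4/n$), where $g$ is constant along the scaling ray and the conclusion survives by equality, is correct, as is your observation that the hypothesis $|\vc|^2<\frac{p-1}{p+3}$ is never used in this lemma and only enters later through the coefficient $(p-1)-|\vc|^2(p+3)$ in Lemma \ref{virial-est}.
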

\begin{proof}
	The proof comes from Theorems 1 and 2 in \cite{lecoz}, and we omit the details.
\end{proof}

\begin{lemma}\label{invar-lem-1}
	The set
	$
	\fk^-_\vc =\{\vec u=(u,v)\in\x,\;S(\vec u)<d(\vec c),\;\nk(u)<0\}
	$
	is invariant under the flow  of \eqref{hbouss-ivp}.
\end{lemma}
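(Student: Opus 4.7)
The approach is the standard continuity-contradiction invariance argument built on conservation of the action $S_\vc$ and the variational characterization of Lemma \ref{equiv-min}. Let $\vec u_0\in \fk^-_\vc$ and let $\vec u(t)=(u(t),v(t))$ be the local solution to \eqref{system} furnished by Theorem \ref{local} on a maximal interval $[0,T_{\max})$. Since $\E$ and $\F$ are both invariants of \eqref{system}, the combination $S_\vc=\E+\vc\cdot\F$ is conserved, so $S_\vc(\vec u(t))\equiv S_\vc(\vec u_0)<d(\vec c)$. Identity \eqref{vect-S} gives $S_\vc(\vec u)=\ns(u)+\tfrac12\|\ltt u+v\|_\lt^2\geq\ns(u)$, whence
\[
\ns(u(t))\leq S_\vc(\vec u(t))<d(\vec c)\qquad\text{for every } t\in[0,T_{\max}).
\]

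Next I would argue by contradiction. Suppose there exists $t_*\in(0,T_{\max})$ with $\nk(u(t_*))\geq 0$. The map $t\mapsto u(t)$ is continuous into $\ho$, and $\nk$ is continuous on $\ho$ (the $L^2$-boundedness of $\L_\vc$ handles the $\|\ltt u\|_\lt$ term, and $p+1<2^\ast$ handles $K$), so $t\mapsto\nk(u(t))$ is continuous. Setting $t_1=\inf\{t\in(0,t_*]\colon \nk(u(t))\geq0\}$, I obtain $\nk(u(t))<0$ on $[0,t_1)$ and $\nk(u(t_1))=0$. The key subclaim is $u(t_1)\not\equiv 0$. Under the standing assumptions $\alpha(p-1)\geq 2\beta$ together with the two cases on $(\alpha,\beta)$, all three quadratic coefficients in $\nk$ are nonnegative, and the first one is nonnegative because $\|u\|_\lt^2-\|\ltt u\|_\lt^2\geq(1-|\vc|^2)\|u\|_\lt^2\geq0$. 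Consequently $\nk(w)<0$ forces $(\alpha(p+1)-\beta n)K(w)$ to bound the positive quadratic part from below, and Gagliardo-Nirenberg/Sobolev yields $K(w)\leq C\|w\|_{\ho}^{p+1}$; comparing a quantity of order $\|w\|_\ho^2$ (or of order $\|\nabla w\|_\lt^2$ in the degenerate Case $2\alpha=\beta n$, combined with an additional GN step) against one of order $\|w\|_\ho^{p+1}$ produces a constant $\delta>0$ with $\|w\|_\ho\geq\delta$ whenever $\nk(w)<0$. Thus $\|u(t)\|_\ho\geq\delta$ on $[0,t_1)$, and by continuity $\|u(t_1)\|_\ho\geq\delta$, so $u(t_1)\neq 0$.

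With $u(t_1)\neq 0$ and $\nk(u(t_1))=0$ in hand, Lemma \ref{equiv-min} delivers $\ns(u(t_1))\geq d(\vec c)$, which directly contradicts the strict bound $\ns(u(t_1))<d(\vec c)$ obtained in the first paragraph. Hence no such $t_*$ exists, $\nk(u(t))<0$ throughout $[0,T_{\max})$, and together with conservation of $S_\vc$ this shows $\vec u(t)\in\fk^-_\vc$ for all $t$ in the existence interval.

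The main obstacle I anticipate is the lower-bound step that rules out $u(t_1)\equiv 0$: the argument is transparent in Case 2 where $2\alpha>\beta n$ (since then the quadratic part controls $\|u\|_\ho^2$ directly up to the factor $1-|\vc|^2$), but the degenerate Case 1 with $2\alpha=\beta n$ kills the $\|u\|_\lt^2$ coefficient and requires balancing $\|\nabla u\|_\lt$ against $\|u\|_\lt$ via Gagliardo-Nirenberg in order to still extract a genuine $\ho$ lower bound. The hypothesis $|\vc|^2<(p-1)/(p+3)$ should enter precisely to make the admissible pairs $(\alpha,\beta)$ consistent with both Lemma \ref{equiv-min} and this bound, so I would check that point carefully before finalizing the constants.
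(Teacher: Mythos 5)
Your proposal is correct and follows essentially the same route as the paper: conservation of $S_\vc=\E+\vc\cdot\F$, the inequality $\ns(u)\le \svc(\vec u)$ from \eqref{vect-S}, a first-crossing-time contradiction argument, and an appeal to Lemma \ref{equiv-min} at the crossing time. The only (minor) difference is how $u(t_0)\neq 0$ is ruled out: the paper invokes the purely quadratic potential-well functional $\tilde{\ns}=\ns-\frac{1}{\al(p+1)-\beta n}\nk$ and the bound $\tilde{\ns}(u(t))\ge d(\vec c)>0$ on $[0,t_0)$, whereas you derive a uniform $\ho$ lower bound on the set $\{\nk<0\}$ directly from Gagliardo--Nirenberg, correctly flagging that the degenerate case $2\al=\beta n$ (where $\al(p-1)\ge 2\beta$ forces $p\ge 1+\frac4n$) needs the extra interpolation step.
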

\begin{proof}
	Let $\vec u_0=(u_0,u_1)\in \fk^-_\vc$, and $\vec u(t)=(u(t),v(t))\in C([0,T),\x)$ be the unique solution of \eqref{hbouss-ivp} corresponding to $\vec u_0$.
	First, we note from \eqref{vect-S} that 
	
	\begin{equation}\label{Ss}
		\ns(\ff)=\svc(\vec\ff)-\frac12\|\L_\vc^{\frac12}\ff+\psi\|_\lt^2\leq \svc(\vec \ff).
	\end{equation}
	
	The invariants $\E$ and $\F$ show that $\svc(\vec u(t))=\svc(\vec u_0)<d(\vec c)$ for any $t\in[0,T)$. Now suppose that there exists $t_0\in(0,T)$ such that $\nk(u(t_0) )=0$ and $\nk(u(t))<0$ for any $t\in[0,t_0)$. Hence, it  is easy to see that 
	\[
	\tilde{\ns}(u(t))\geq d(\vec c)>0 
	\] 
	for any $[0,t_0)$, where $\tilde{\ns}(u)=\ns(u)-\frac 1{\al(p+1)-\beta n}\nk(u)$.
	Consequently, $u(t_0)\neq0$. Hence, the fact  $\nk(u(t_0))=0$, inequality \eqref{Ss} and Lemma \ref{equiv-min} imply that $d(\vec c)\leq \ns(u(t_0))\leq \svc(\vec u(t))<d(\vec c)$. This contradiction shows that $\vec u(t)\in \fk^-_\vc$ for any $t\in[0,T)$.
\end{proof}

\begin{remark}
	From the definition of $\fk^-_\vc$, it is evident that $\lam\vec\ff= \lam(\ff,-\ltt\ff) \in \fk^-_\vc$ for any $\lam>1$. Furthermore, by utilizing Lemmas \ref{mountain} and \ref{equiv-min}, the definition of $G(\vec c)$, and Theorem \ref{theorem-variational-2}, it can be observed that $d(\vec c)=\ns(\ff)$.
\end{remark}

\begin{proposition}\label{global-res}
	The set
	$
	\fk^+_\vc =\sett{\vec u=(u,v)\in\x,\;S(\vec u)<d(\vec c),\;\nk(u)>0}
	$
	is invariant under the flow  of \eqref{hbouss-ivp}. Moreover, if $\vec u_0=(u_0,u_1)\in \fk^+_\vc$, then the local solution $\vec u(t)\in C([0,T),\x)$ of \eqref{hbouss-ivp}, corresponding to the initial data $\vec u_0$,   exists globally in time.
\end{proposition}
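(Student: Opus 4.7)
The plan is to argue the two claims separately via a potential-well technique, paralleling Lemma~\ref{invar-lem-1} for the invariance of $\fk^+_\vc$, and then to derive global existence from a coercive ``modified energy'' obtained by subtracting a multiple of $\nk$ from $\ns$.

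\emph{Step 1 (Invariance).} Take $\vec u_0=(u_0,u_1)\in\fk^+_\vc$ with corresponding local solution $\vec u(t)=(u(t),v(t))\in C([0,T),\x)$. Conservation of $\E$ and $\F$ yields $\svc(\vec u(t))=\svc(\vec u_0)<d(\vec c)$ throughout $[0,T)$. Suppose, for contradiction, that $t_0\in(0,T)$ is the first time for which $\nk(u(t_0))=0$. If $u(t_0)\neq 0$, then Lemma~\ref{equiv-min} forces $\ns(u(t_0))\geq d(\vec c)$, which combined with \eqref{Ss} gives
\[
d(\vec c)\leq\ns(u(t_0))\leq\svc(\vec u(t_0))=\svc(\vec u_0)<d(\vec c),
\]
a contradiction. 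I would rule out $u(t_0)=0$ by observing that, under the structural constraints on $\al,\beta$, the quadratic part of $\nk$,
\[
\tfrac{2\al-\beta n}{2}\bigl(\|u\|_\lt^2-\|\ltt u\|_\lt^2\bigr)+\tfrac{2\al-\beta(n-2)}{2}\|\nabla u\|_\lt^2,
\]
is bounded below by a positive multiple of $\|u\|_{H^1(\rn)}^2$ (using $|\vc|<1$), while $(\al(p+1)-\beta n)K(u)=O(\|u\|_{H^1}^{p+1})$; hence $\nk(u(t))>0$ on a small $H^1$-neighborhood of $0$, contradicting the definition of $t_0$.

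\emph{Step 2 (Global existence).} I would introduce
\[
\tilde\ns(u)=\ns(u)-\frac{\nk(u)}{\al(p+1)-\beta n}.
\]
The $(p+1)$-terms of $\ns$ and $\nk$ cancel exactly, leaving the purely quadratic expression
\[
\tilde\ns(u)=\frac{\al(p-1)}{2(\al(p+1)-\beta n)}\bigl(\|u\|_\lt^2-\|\ltt u\|_\lt^2\bigr)+\frac{\al(p-1)-2\beta}{2(\al(p+1)-\beta n)}\|\nabla u\|_\lt^2,
\]
which is coercive on $H^1(\rn)$ since $|\vc|<1$ and $\al(p-1)\geq 2\beta$. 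Using \eqref{vect-S} I would then decompose
\[
\svc(\vec u_0)=\svc(\vec u(t))=\tilde\ns(u(t))+\frac{\nk(u(t))}{\al(p+1)-\beta n}+\tfrac12\|\ltt u(t)+v(t)\|_\lt^2.
\]
All three summands are nonnegative on $\fk^+_\vc$, so each is bounded above by $\svc(\vec u_0)$. In particular, $\tilde\ns(u(t))$ is bounded uniformly in $t$, which gives a uniform $H^1$-bound on $u(t)$; the third term together with $\|\ltt u(t)\|_\lt\leq|\vc|\|u(t)\|_\lt$ then yields an $\lt$-bound on $v(t)$. A uniform $\x$-bound on $\vec u(t)$ combined with the blow-up alternative of Theorem~\ref{local} forces $T=+\infty$.

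The main obstacle will be the invariance step at the degenerate case $u(t_0)=0$: the surface $\{\nk=0\}\setminus\{0\}$ sits above level $d(\vec c)$ by Lemma~\ref{equiv-min}, but the origin itself also lies in $\{\nk=0\}$, so one must quantitatively show that $\fk^+_\vc$ is separated from $0$ inside the sublevel set $\{\svc<d(\vec c)\}$ by the strict positive-definiteness of the quadratic part of $\nk$. The very same positive-definiteness drives the coercivity of $\tilde\ns$ in Step~2, so in effect a single quadratic-form estimate does the work for both assertions, and everything else is a routine adaptation of the $\fk^-_\vc$-argument.
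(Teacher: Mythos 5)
Your proposal is correct and takes essentially the same route as the paper: the invariance step is the same continuity argument as in Lemma~\ref{invar-lem-1}, and your decomposition $\svc(\vec u)=\tilde{\ns}(u)+\frac{\nk(u)}{\al(p+1)-\beta n}+\tfrac12\|\ltt u+v\|_\lt^2$ into nonnegative pieces is an algebraic repackaging of the paper's direct substitution of the bound coming from $\nk(u)>0$ into $S(\vec u)<d(\vec c)$, producing the same coercive quadratic form. The only cosmetic difference is that the paper controls the momentum cross-term by conservation of $\F$ while you absorb it into $\tfrac12\|\ltt u+v\|_\lt^2\geq 0$; both are immaterial variations of the same estimate.
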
 
\begin{proof}
	Analogous to the proof of Lemma \ref{invar-lem-1}, the invariance of $\fk^+_\vc$ can be deduced. To demonstrate the global extension of the local solution $\vec u(t)=(u(t),v(t))\in C([0,T),\x)$, it suffices to establish the a priori estimate $\|\vec u(t)\|_\x\leq C_T$ for all $t\in[0,T)$. The first part of this  proposition  asserts that $\vec u(t)\in \fk^+_\vc$, meaning $S(\vec u(t))<d(\vc)$ and $\nk(u(t))>0$. Consequently, it follows from these inequalities that 
	\[
	\begin{split}
		&\|u(t)\|_\lt^2+\|\nabla u(t)\|_\lt^2+\|v(t)\|_\lt^2 
		+2\vc\cdot\F(\vec u(t))-2d(\vc)\\
		&\quad<
		\frac{1}{\al(p+1)-\beta n}\left(( 2\al-\beta n)  \left(\|u\|_{\lt}^2-\|\ltt u\|_\lt^2\right)+ (2\al-\beta (n-2)) \|\nabla u\|_\lt^2 \right).
	\end{split}
	\]
	This immediately implies that
	\[
	\begin{split}
		\frac{\al(p-1)}{\al(p+1)-\beta n}\|u(t)\|_\lt^2&+\frac{\al(p-1)-2\beta}{\al(p+1)-\beta n}
		+(2\al-\beta n) \|\ltt u(t)\|_\lt^2+\|v(t)\|_\lt^2\\& 
		<d(\vc)-2\vc\cdot\F(\vec u_0) 
	\end{split}
	\]
	for all $t\in[0,T)$. We consequently conclude  that $\|\vec u(t)\|_\x^2\lesssim d(\vc)-2\vc\cdot\F(\vec u_0)$ for all  $t\in[0,T)$.
\end{proof}

\begin{remark}\label{invariant-lemma-2}
	Lemma \ref{invar-lem-1} and Proposition 	\ref{global-res} still hold if one uses \eqref{vect-S} and  substitutes  invariant sets $\fk_\vc^\pm$  with
	$$
	\fk^\pm_\vc =\sett{\vec u=(u,v)\in\x,\;S(\vec u)<d(\vec c),\;\mp\nk(\vec u)<0}
	,$$
	where
	\[
	\begin{split}
		\nk(u,v)&=\frac{2\al-\beta n}2\left(\|u\|_{\lt}^2-\|\ltt u\|_\lt^2+\|\ltt u+ v\|_\lt^2\right)\\
		&\qquad+\frac{2\al-\beta (n-2)}2\|\nabla u\|_\lt^2-(\al(p+1)-\beta n)K(u).
	\end{split}
	\]
	Moreover, it holds that $(u_0,v_0)\in \fk^-_\vc$. Hence, the associated solution $\vec u$ of \eqref{system} satisfies $$\nk(\vec u)<-((p+1)\alpha-n\beta)(d(\vc)-S(u_0,v_0)).$$ 
\end{remark}

Let $\vec\ff=(\ff,-\ltt\ff)\in\ngg(\vec c)$. By an argument, similar to one of  Lemma 4.4 in \cite{liu95}, we readily observe that  the set $D=\{u\in\ho,\;(-\de)^{-\frac12} u\in\lt\}$ is dense in $\ho$. Whence,    there is $\{\vec\ff_j=(\ff_j,-\ltt\ff_j)\}_j$ such that $\ff_j\in D$ and $\ff_j\to\ff$ in $\ho$ as $j\to\infty$. Set $\vec u_0=\lam\vec\ff_j$ for $\lam>1$. Then, we have
\[
\svc(\vec u_0)=\E(\lam\vec\ff)+\vc\cdot\F(\lam\vec\ff)+O(j^{-1}).
\]
Since $\lam\vec\ff\in \mathfrak{K}^-_\vc$, it is found that $\svc(\vec u_0)<d(\vec c)$ by choosing $j\geq j_0$ sufficiently large. Similarly, we have $\nk(\vec u_0)<\nk(\vec\ff)=0$ and
$-\vc\cdot\F(\vec u_0)> \|\ltt\ff\|_\lt^2$  if $j\geq j_0$ is large enough. This implies that $\vec u_0\in \fk^-_\vc$ for $j \geq j_0$. Now, for $\lam>1$, let $\vec u(t)=(u(t),v(t))$, with $t\in[0,T)$, be the solution of \eqref{hbouss} with the initial data $\vec u_0=\lam\vec\ff_{j_0}$.
Denote
\[
I(t)=\|(-\de)^{-\frac12}u\|_\lt^2,\qquad t\in[0,T).
\]
Since $\vec u_0\in D$, then $I(t)$ is well-defined.
Using the same lines of Lemma 2.10 of \cite{lot}, it is easy to see that
\[
I'(t)=2\left\la(-\de)^{-\frac12}u,v\right\ra
\]
and
\[
I''(t)=2\|v\|_\lt^2-2\|u\|_{\ho}^2+2\|u\|_{L^{p+1}(\rn)}^{p+1}.
\]
\begin{lemma}\label{virial-est}
	For any $\lam>1$, there exists $C_\lam>0$ such that\[
	I''(t)\geq(p+3)  \|v+\ltt  u\|_\lt^2+C_\lam
	\]
	for all $t\in[0,T)$.
\end{lemma}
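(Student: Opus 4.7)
\textbf{Proof plan for Lemma \ref{virial-est}.}

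The plan is to combine the conservation of $\E$ and $\F$ along the flow of \eqref{system} with the dynamical invariance of the set $\fk_\vc^-$ established in Lemma \ref{invar-lem-1} and sharpened in Remark \ref{invariant-lemma-2}.

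First, I would eliminate $\|u\|_{L^{p+1}(\rn)}^{p+1}$ from the expression $I''(t) = 2\|v\|_\lt^2 - 2\|u\|_\ho^2 + 2\|u\|_{L^{p+1}(\rn)}^{p+1}$ stated above the lemma, using conservation of $\E$, to obtain
\[
I''(t) = (p+3)\|v(t)\|_\lt^2 + (p-1)\|u(t)\|_\ho^2 - 2(p+1)\E(\vec u_0).
\]
Subtracting $(p+3)\|v+\ltt u\|_\lt^2 = (p+3)\|v\|_\lt^2 + 2(p+3)\langle v,\ltt u\rangle + (p+3)\|\ltt u\|_\lt^2$ and using conservation of $\vc\cdot\F$ (so that $\langle v(t),\ltt u(t)\rangle = \vc\cdot\F(\vec u_0)$) cancels the $\|v\|_\lt^2$ terms and yields the pivotal identity
\[
I''(t) - (p+3)\|v+\ltt u\|_\lt^2 = (p-1)\|u(t)\|_\ho^2 - (p+3)\|\ltt u(t)\|_\lt^2 - 2(p+1)\svc(\vec u_0) - 4\vc\cdot\F(\vec u_0).
\]

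Next I would exploit the pointwise bound $\|\ltt w\|_\lt\leq|\vc|\|w\|_\lt$. Under the hypothesis $|\vc|^2<(p-1)/(p+3)$ this produces the nonnegative quadratic form
\[
(p-1)\|u\|_\ho^2 - (p+3)\|\ltt u\|_\lt^2 \geq c_0\|u\|_\lt^2 + (p-1)\|\nabla u\|_\lt^2, \qquad c_0 := (p-1)-(p+3)|\vc|^2>0.
\]
For the initial data $\vec u_0 = \lam\vec\ff_{j_0}$ with $\lam>1$ one has $v_0 + \ltt u_0 = 0$, so that the pivotal identity at $t=0$ agrees with $I''(0)$. A direct calculation using the Pohozaev identities \eqref{pohoz} at the ground state $\ff$, together with the defining relation $2J(\ff)=(p+1)K(\ff)$, evaluates the explicit constant to
\[
-2(p+1)\svc(\vec u_0) - 4\vc\cdot\F(\vec u_0) = 4\lam^2J(\ff)(\lam^{p-1}-1) - \lam^2\big[c_0\|\ff_{j_0}\|_\lt^2 + (p-1)\|\nabla\ff_{j_0}\|_\lt^2\big] + o_j(1),
\]
whence at $t=0$,
\[
I''(0) - (p+3)\|v_0+\ltt u_0\|_\lt^2 = 4\lam^2J(\ff)(\lam^{p-1}-1)+o_j(1)>0 \quad\text{for every } \lam>1.
\]

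To propagate this positivity to all $t\in[0,T)$, I would apply Remark \ref{invariant-lemma-2} with $(\alpha,\beta)=(1,0)$, which gives the strict inequality $\PP(u(t))+\|v(t)+\ltt u(t)\|_\lt^2<-(p+1)\delta$ along the flow, where $\delta = d(\vc)-\svc(\vec u_0)>0$. Rearranging and invoking the Gagliardo–Nirenberg inequality $\|u\|_{L^{p+1}(\rn)}^{p+1}\leq C\|u\|_\ho^{p+1}$ furnishes a uniform positive lower bound $\|u(t)\|_\ho\geq \eta(\lam)>0$ for all $t\in[0,T)$, which together with the quadratic estimate above forces
\[
c_0\|u(t)\|_\lt^2+(p-1)\|\nabla u(t)\|_\lt^2 \geq c_0^\star\eta(\lam)^2, \qquad c_0^\star := \min(c_0,p-1).
\]
Inserting this into the pivotal identity then delivers $I''(t)-(p+3)\|v+\ltt u\|_\lt^2 \geq C_\lam$ with $C_\lam>0$.

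The main obstacle is the quantitative step last described: when $\lam\to 1^+$ the explicit constant $-2(p+1)\svc(\vec u_0) - 4\vc\cdot\F(\vec u_0)$ can be strictly negative (of order $-c_0\|\ff\|_\lt^2 - (p-1)\|\nabla\ff\|_\lt^2$ at $\lam=1$) while $\delta = O((\lam-1)^2)$ is small, so the lower bound $\eta(\lam)$ coming from Gagliardo–Nirenberg must be refined via the $\delta$-sharpening of Remark \ref{invariant-lemma-2}. The identity $I''(0)=4\lam^2J(\ff)(\lam^{p-1}-1)>0$, which exactly matches the positive contribution remaining after all cancellations at $t=0$, suggests that the Pohozaev identities \eqref{pohoz} should be used in a way that absorbs the initial-data constant into the quadratic form, so that the positivity at $t=0$ persists uniformly in $t$.
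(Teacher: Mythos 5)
Your opening reduction coincides with the paper's own: eliminating $\|u\|_{L^{p+1}(\rn)}^{p+1}$ via conservation of $\E$, completing the square in $v$ using conservation of $\vc\cdot\F$ to reach the pivotal identity, and the lower bound $(p-1)\|u\|_\ho^2-(p+3)\|\ltt u\|_\lt^2\geq c_0\|u\|_\lt^2+(p-1)\|\nabla u\|_\lt^2$ with $c_0=(p-1)-(p+3)|\vc|^2>0$ are exactly the paper's steps, and your computation $I''(0)=4\lam^2J(\ff)(\lam^{p-1}-1)$ is correct. But the argument does not close, and the gap you flag yourself is genuine: the constant $-2(p+1)\svc(\vec u_0)-4\vc\cdot\F(\vec u_0)$ is, for $\lam$ near $1$, negative of size comparable to the quadratic form evaluated at $\ff$ itself, so what you actually need is that the quadratic form along the flow never drops below its value at the ground state, uniformly in $t$. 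A crude lower bound $\|u(t)\|_\ho\geq\eta(\lam)$ extracted from Remark \ref{invariant-lemma-2} and Gagliardo--Nirenberg cannot deliver this: $\eta(\lam)$ degenerates as $\lam\to1^+$ (your own observation that $\delta=O((\lam-1)^2)$), and no refinement that fixes the mismatch is supplied. Positivity at $t=0$ plus an unquantified norm bound does not propagate to a $t$-independent $C_\lam$.

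The missing idea is the paper's identification of the quadratic form with the functional $\tilde{\ns}(u)=\ns(u)-\tfrac{1}{\al(p+1)-\beta n}\nk(u)$. After the further lowering $c_0\|u\|_\lt^2\geq c_0\left(\|u\|_\lt^2-\|\ltt u\|_\lt^2\right)$, the paper rewrites
\[
c_0\left(\|u\|_\lt^2-\|\ltt u\|_\lt^2\right)+(p-1)\|\nabla u\|_\lt^2
\quad\text{as}\quad
\frac{2(p-1)(\al(p+1)-n\beta)}{\al(p-1)-2\beta}\,\tilde{\ns}(u)
\]
for a suitable choice of $(\al,\beta)$, and the proof of Lemma \ref{invar-lem-1} already established that $\tilde{\ns}(u(t))\geq d(\vc)$ for all $t\in[0,T)$ while $\vec u(t)$ remains in the invariant set $\fk^-_\vc$. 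Substituting this uniform variational lower bound into the pivotal identity and using $\svc(\vec u_0)<d(\vc)$, $-\vc\cdot\F(\vec u_0)>\|\ltt\ff\|_\lt^2$ and the Pohozaev identities \eqref{pohoz}, the residual constant collapses to $\nc(u)\geq4\|\ltt\ff\|_\lt^2>0$, which is the desired $C_\lam$. In short, the propagation in $t$ is achieved by a conserved-below variational quantity, not by a norm lower bound; without that step your proposal does not prove the lemma.
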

\begin{proof}
	First, we note from  
	\[
	2\|u\|_{L^{p+1}(\rn)}^{p+1}=-2(p+1)\E(\vec u)+(p+1)\|u\|_{H^1(\rn)}^2
	+(p+1)\|v\|_\lt^2 
	\]
	that
	\[
	\begin{split}
		I''(t)&=(p-1)\|u\|_{H^1(\rn)}^2-2(p+1)\E(\vec u)+(p+3)\|v\|_\lt^2\\
		&=(p+3)\|v+\ltt  u\|_\lt^2 -(p+3)\|\ltt  u\|_\lt^2
		+(p-1)\|u\|_{H^1(\rn)}^2-2(p+1) \svc(\vec u)-4\vc\cdot \F(\vec u)\\
		&=
		(p+3)\|v+\ltt u\|_\lt^2 -(p+3)\|\ltt  u\|_\lt^2
		+(p-1)\|u\|_{\ho}^2-2(p+1) \svc(\vec u_0)-4\vc\cdot \F(\vec u_0).
	\end{split}
	\]
	On the other hand, we have
	\[
	\begin{split}
		-(p+3)\|\ltt  u\|_\lt^2
		+(p-1)\|u\|_{\ho}^2
		&\geq
		-(p+3)|\vec c|^2\|   u\|_\lt^2
		+(p-1)\|u\|_{\ho}^2\\
		&=((p-1)-|\vec c|^2(p+3))\|u\|_\lt^2
		+(p-1)\|\nabla u\|_\lt^2\\&
		\geq	((p-1)-|\vec c|^2(p+3))(\|u\|_\lt^2-\|\ltt u\|_\lt^2)
		\\&\qquad	+(p-1)\|\nabla u\|_\lt^2\\ 
		&= \frac{2(p-1)(\al(p+1)-n\beta)}{\al(p-1)-2\beta}\tilde{\ns}(u).
	\end{split}
	\]
	Hence, we get
	\[
	I''(t)\geq 
	(p+3)\|v+\ltt u\|_\lt^2 + \frac{2(p-1)(\al(p+1)-n\beta)}{\al(p-1)-2\beta}\tilde{\ns}(u)-2(p+1) \svc(\vec u_0)-4\vc\cdot \F(\vec u_0).
	\]
	
	For any $\lam>1$, it is easy to see that $\svc(\vec u_0)=\ns(\lam\ff)<d(\vec c) $ and $-\vc\cdot\F(\vec u_0)>\|\ltt \ff\|_\lt^2$.
	By \eqref{pohoz}, it follows from Lemma \ref{invar-lem-1}, after some straightforward computation, that
	\[
	I''(t)\geq\|v+\ltt  u\|_\lt^2+\frac{2(p-1)(\al(p+1)-n\beta)}{\al(p-1)-2\beta}\left(\tilde{\ns}(u)-d(\vec c)\right)+\nc(u) ,
	\]
	where
	\begin{align*}
		\nc(u)&=2\left(\frac{ (p-1)(\al(p+1)-n\beta)}{\al(p-1)-2\beta}-p-1\right)d(\vec c)+4\|\ltt \ff\|_\lt^2\\
		&\geq 4\|\ltt \ff\|_\lt^2>0,
	\end{align*}
	provided $p<2^\ast-1$.
\end{proof}

\begin{proof}[Proof of Theorem \ref{inst-theo}]
	The proof follows from Lemma \ref{virial-est} and concavity arguments due to Levine \cite{levine} as in Payne and Sattinger \cite{payne-s}. Indeed, if $\lambda > 1$, then $\lambda\vec\ff$ converges to $\vec\ff$ in $\x$. By the density of $D$ in $\ho$, we can assume that the unique solutions $\vec u_\lambda\in C([0,T_\lambda),\x)$ of \eqref{hbouss}, corresponding to the initial data $\lambda\vec\ff$, satisfy the following estimate:
	\[
	I''(t)I(t)-\frac{p-1}{4}(I'(t))^2\geq0,
	\]
	presuming that $T_\lambda = +\infty$. However, this easily implies that there exists some $t_0$ where $I(t_0) \leq 0$. This contradiction completes the proof.
\end{proof}
 Combining \eqref{vector-gn} and \eqref{gn-equiv} and following the proof of Theorem \ref{local}, we derive a new blow-up condition depending on $d(\vec c)$. 
\begin{corollary}\label{blow-up-cor}
	Let $\vec c\in B_1(0)\in\rn$, $(u_0,u_1)\in H^1(\rn)\times \dot{H}^{-1}(\rn)$. 
	If  \[\|u_0\|_\ho>C_{\vec c}^{-\frac{p+1}{p-1}},\] then the local solution $u(t)\in\ho$ of \eqref{hbouss-ivp} blows up in finite time.   
\end{corollary}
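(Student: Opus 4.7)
The plan is to revisit the blow-up half of Theorem \ref{local}, which was obtained in \cite{cgs} by adapting the Payne--Sattinger concavity method \cite{payne-s,levine} via the classical Gagliardo--Nirenberg inequality with constant $C_{GN}$, and to observe that every appearance of $C_{GN}$ in that argument may be replaced by the larger constant $C_{\vec c}$ supplied by \eqref{vector-gn}. By Remark \ref{gn-remark-1} one has $C_{\vec c} \geq C_{GN}$, hence $C_{\vec c}^{-(p+1)/(p-1)} \leq C_{GN}^{-(p+1)/(p-1)}$, which is exactly what makes the new threshold strictly sharper for $\vc\neq\vec 0$. Conceptually, this is the same upgrade that replaces $\E$ by the shifted Lyapunov functional $\svc = \E + \vec c\cdot\F$ in Section \ref{section-str-ins}.

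First, I would use \eqref{vector-gn} to estimate the nonlinear term $K(u)=\tfrac{1}{p+1}\|u\|_{L^{p+1}(\rn)}^{p+1}$ in terms of $\|\nabla u\|_\lt$ and the coercive quantity $\|u\|_\lt^2-\|\ltt u\|_\lt^2$, both of which are controlled by $\svc$ and its momentum part. As in \cite{liu95,payne-s}, a one-variable analysis of the resulting polynomial inequality $f(\lambda)$ at $\lambda=\|u\|_\ho$ shows that the set
\[
W^-_\vc = \bigl\{\vec u \in \x : \svc(\vec u) < d(\vec c),\ \|u\|_\ho > C_{\vec c}^{-\frac{p+1}{p-1}}\bigr\}
\]
is invariant under the flow of \eqref{system}, and that along trajectories in $W^-_\vc$ one has a uniform coercive estimate $\|\nabla u\|_\lt^2+\|u\|_\lt^2-\|\ltt u\|_\lt^2-\|u\|_{L^{p+1}(\rn)}^{p+1} \leq -\delta$ for some $\delta>0$ depending on the gap $d(\vec c)-\svc(\vec u_0)$. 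The role played by Lemma \ref{invar-lem-1} in the ground-state setting is played here by \eqref{vector-gn} together with the variational characterisation of $C_{\vec c}$.

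Then, approximating $u_0$ by elements of the dense subspace $D=\{u\in\ho : (-\Delta)^{-1/2}u\in\lt\}$ as in the proof of Theorem \ref{inst-theo}, I would compute the second derivative of $I(t) = \|(-\Delta)^{-1/2}u(t)\|_\lt^2$ using the standard virial identity and combine it with the coercive estimate above to obtain $I''(t)\geq \kappa$ for some $\kappa>0$, and more precisely an inequality of the Levine type $I''(t)I(t)-\tfrac{p-1}{4}(I'(t))^2\geq 0$. Since $I(t)>0$ cannot persist indefinitely under such a concavity bound, the assumption of global existence leads to a contradiction, forcing finite-time blow-up. The main obstacle is verifying that the sign condition $\|u(t)\|_\ho > C_{\vec c}^{-(p+1)/(p-1)}$ is preserved along the flow and passing from the $D$-approximants back to the original data; both are handled by a continuity argument exploiting that $C_{\vec c}^{-(p+1)/(p-1)}$ is the unique positive zero of the real function separating the two potential wells of $\svc$, together with the local well-posedness theory of \cite{cgs}.
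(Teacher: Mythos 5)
Your outline --- rerun the blow-up half of Theorem \ref{local} with $C_{GN}$ replaced by the anisotropic constant $C_{\vc}$ from \eqref{vector-gn}, then conclude with $I(t)=\|(-\Delta)^{-\frac12}u\|_{\lt}^2$ and Levine's concavity argument --- is exactly the route the paper takes (its proof is the single sentence preceding the corollary). However, your implementation of the potential-well step has a gap. You define $W^-_\vc=\bigl\{\vec u\in\x:\ \svc(\vec u)<d(\vc),\ \|u\|_{\ho}>C_{\vc}^{-\frac{p+1}{p-1}}\bigr\}$ and claim its invariance from a one-variable analysis based on \eqref{vector-gn}. But \eqref{vector-gn} bounds $K(u)$ by powers of $\|\nabla u\|_{\lt}^2$ and $\|u\|_{\lt}^2-\|\ltt u\|_{\lt}^2$, whose sum $2J(u)$ is \emph{smaller} than $\|u\|_{\ho}^2$; the barrier inequality you obtain is therefore $\svc(\vec u)\ge f_\vc\bigl(\sqrt{2J(u)}\bigr)$ with $f_\vc(\lambda)=\frac12\lambda^2-\frac{C_\vc^{p+1}}{p+1}\lambda^{p+1}$, and it propagates a lower bound on $2J(u(t))$, not on $\|u(t)\|_{\ho}$. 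Since $f_\vc$ is increasing on $[0,\lambda_\vc]$ with $\lambda_\vc=C_\vc^{-\frac{p+1}{p-1}}$, the event $\|u(t_0)\|_{\ho}=\lambda_\vc$ only gives $2J(u(t_0))\le\lambda_\vc^2$ and hence $f_\vc(\sqrt{2J(u(t_0))})\le f_\vc(\lambda_\vc)$ --- the wrong direction --- so invariance of your $W^-_\vc$ does not follow. The same mismatch leaves an uncontrolled $-\|\ltt u\|_{\lt}^2$ in your lower bound for $I''$.

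The repair is to do literally what ``following the proof of Theorem \ref{local}'' means: keep the conserved energy $\E$ and the full $H^1$ norm. Because $\|u\|_{\lt}^2-\|\ltt u\|_{\lt}^2\le\|u\|_{\lt}^2$, inequality \eqref{vector-gn} gives the one-variable bound $\frac{1}{p+1}\|u\|_{L^{p+1}(\rn)}^{p+1}\le\frac{C_\vc^{p+1}}{p+1}\|u\|_{\ho}^{p+1}$, hence $\E(\vec u)\ge\frac12\|v\|_{\lt}^2+f_\vc(\|u\|_{\ho})$ with $\max f_\vc=f_\vc(\lambda_\vc)=\frac{p-1}{2(p+1)}C_\vc^{-\frac{2(p+1)}{p-1}}$. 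This makes the set $\{\E<f_\vc(\lambda_\vc),\ \|u\|_{\ho}>\lambda_\vc\}$ invariant and yields $I''=(p+3)\|v\|_{\lt}^2+(p-1)\|u\|_{\ho}^2-2(p+1)\E\ge(p+3)\|v\|_{\lt}^2+2(p+1)\bigl(f_\vc(\lambda_\vc)-\E(\vec u_0)\bigr)>0$, after which your concavity step closes the argument with no extra restriction on $\vc$. Note finally that, exactly as in Theorem \ref{local}, the energy hypothesis $\E(\vec u_0)<f_\vc(\lambda_\vc)$ (the ``condition depending on $d(\vc)$'') is indispensable and is tacitly assumed both in your proposal and in the corollary as stated; your observation that $C_\vc\ge C_{GN}$ makes both thresholds weaker than those of Theorem \ref{local} when $\vc\neq\vec 0$ is correct.
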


\section*{Acknowledgments}
The author would like to thank the referees for their careful reading and valuable
comments improving presentation of the results. The author is supported by Nazarbayev University under Faculty Development Competitive Research Grants Program  for 2023-2025 (grant number 20122022FD4121).

\section*{Conflict of interest} The author  has no conflicts of interest to declare.

\section*{Data availability statement} 
Data sharing is not applicable to this article as no new data were created or analyzed in this study.


\bibliographystyle{acm}
\bibliography{main}	

\end{document}